\author{Tuomas Orponen, Carmelo Puliatti, and Aleksi Py\"or\"al\"a}
\title[Fourier transforms of fractal measures]{On Fourier transforms of fractal measures \\ on the parabola}
\address{Department of Mathematics and Statistics\\ University of Jyv\"askyl\"a,
P.O. Box 35 (MaD)\\
FI-40014 University of Jyv\"askyl\"a\\
Finland} \email{tuomas.t.orponen@jyu.fi}
\email{aleksi.v.pyorala@jyu.fi}
\address{Department of Mathematics and Statistics\\ University of Jyv\"askyl\"a,
	P.O. Box 35 (MaD)\\
	FI-40014 University of Jyv\"askyl\"a\\
	Finland} 
\address{Current address:
Departament de Matem\`atiques\\ Universitat Aut\`onoma de Barcelona, 08193 Bellaterra, Barcelona, Catalonia}
\email{carmelo.puliatti@uab.cat}
\date{\today}
\subjclass[2020]{28A80, 42B10, 11B30}
\keywords{Convolutions, Fourier transforms, Fractals, Furstenberg sets, Vinogradov systems}
\thanks{T.O. is supported by the Research Council of Finland via the project \emph{Approximate incidence geometry}, grant no. 355453, and by the European Research Council (ERC) under the European Union’s Horizon Europe research and innovation programme (grant agreement No 101087499). C.P. is supported by the Research Council of Finland via the project \emph{Singular integrals, harmonic functions, and boundary regularity in Heisenberg groups}, grant no. 352649. A.P. is supported by the Research Council of Finland via the project \emph{GeoQuantAM: Geometric and Quantitative Analysis on Metric spaces}, grant no. 354241.}
\newcommand{\R}{\mathbb{R}}
\newcommand{\N}{\mathbb{N}}
\newcommand{\Z}{\mathbb{Z}}
\newcommand{\spt}{\operatorname{spt}}
\newcommand{\Hd}{\dim_{\mathrm{H}}}
\newcommand{\diam}{\operatorname{diam}}
\newcommand{\dist}{\operatorname{dist}}
\def\Barint_#1{\mathchoice
          {\mathop{\vrule width 6pt height 3 pt depth -2.5pt
                  \kern -8pt \intop}\nolimits_{#1}}%
          {\mathop{\vrule width 5pt height 3 pt depth -2.6pt
                  \kern -6pt \intop}\nolimits_{#1}}%
          {\mathop{\vrule width 5pt height 3 pt depth -2.6pt
                  \kern -6pt \intop}\nolimits_{#1}}%
          {\mathop{\vrule width 5pt height 3 pt depth -2.6pt
                  \kern -6pt \intop}\nolimits_{#1}}}
\numberwithin{equation}{section}
\theoremstyle{plain}
\newtheorem{thm}[equation]{Theorem}
\newtheorem*{"thm"}{"Theorem"}
\newtheorem{lemma}[equation]{Lemma}
\newtheorem{cor}[equation]{Corollary}
\newtheorem{proposition}[equation]{Proposition}
\newtheorem{question}{Question}
\newtheorem{claim}[equation]{Claim}
\theoremstyle{definition}
\newtheorem{definition}[equation]{Definition}
\newtheorem{notation}[equation]{Notation}
\theoremstyle{remark}
\newtheorem{remark}[equation]{Remark}
\newcommand{\nref}[1]{(\hyperref[#1]{#1})}
\DeclareMathSymbol{\intop}  {\mathop}{mathx}{"B3}
\begin{document}

\begin{abstract} Let $s \in [0,1]$ and $t \in [0,\min\{3s,s + 1\})$. Let $\sigma$ be a Borel measure supported on the parabola $\mathbb{P} = \{(x,x^{2}) : x \in [-1,1]\}$ satisfying the $s$-dimensional Frostman condition $\sigma(B(x,r)) \leq r^{s}$. Answering a question of the first author, we show that there exists an exponent $p = p(s,t) \geq 1$ such that
$$\|\hat{\sigma}\|_{L^{p}(B(R))} \leq C_{s,t}R^{(2 - t)/p}, \qquad R \geq 1.$$
Moreover, when $s \geq 2/3$ and $t \in [0,s + 1)$, the previous inequality is true for $p \geq 6$.

We also obtain the following fractal geometric counterpart of the previous results. If $K \subset \mathbb{P}$ is a Borel set with $\Hd K = s \in [0,1]$, and $n \geq 1$ is an integer, then 
$$ \dim_{\mathrm{H}}(nK) \geq \min\{3s - s \cdot 2^{-(n - 2)},s + 1\}.$$
 \end{abstract}

\maketitle

\tableofcontents

\section{Introduction}

The purpose of this paper is to prove the following theorem:

\begin{thm}\label{main} For every $0 \leq s \leq 1$ and $t \in [0,\min\{3s,s + 1\})$, there exists $p = p(s,t) \geq 1$ such that the following holds. Let $\sigma$ be a Borel measure on $\mathbb{P} = \{(x,x^{2}) : x \in [-1,1]\}$ satisfying $\sigma(B(x,r)) \leq r^{s}$ for all $x \in \R^{2}$ and $r > 0$. Then, 
\begin{equation}\label{form83} \|\hat{\sigma}\|_{L^{p}(B(R))} \leq C_{s,t}R^{(2 - t)/p}, \qquad R \geq 1. \end{equation}
When $s \geq 2/3$, one may take $p(s,t) = 6$. \end{thm}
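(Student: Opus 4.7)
The theorem admits a natural reduction to a convolution/sumset estimate on $\mathbb{P}$. I would first restrict attention to even integer exponents $p = 2n$. Using the identity $|\hat{\sigma}|^{2n} = \widehat{\sigma^{*n} \ast \widetilde{\sigma^{*n}}}$ together with Parseval applied to a smooth cutoff $\hat{\phi} \geq \mathbf{1}_{B(R)}$ whose Fourier inverse concentrates at scale $R^{-1}$ around the origin, one obtains
\begin{equation*}
\|\hat{\sigma}\|_{L^{2n}(B(R))}^{2n} \;\lesssim\; R^{2}\int \sigma^{*n}\bigl(B(y,R^{-1})\bigr)\,d\sigma^{*n}(y).
\end{equation*}
The target bound $R^{2-t}$ is therefore equivalent to the ``$R^{-1}$-energy'' of the $n$-fold convolution $\sigma^{*n}$ being of order $R^{-t}$.

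The energy bound in turn should follow, via a standard $\delta$-discretisation and a pigeonhole to extract a uniform Frostman sub-measure of $\sigma^{*n}$, from the companion fractal statement in the abstract: for Borel $K \subset \mathbb{P}$ with $\dim_{\mathrm{H}} K = s$,
$$\dim_{\mathrm{H}}(nK) \;\geq\; \min\bigl\{3s - s\cdot 2^{-(n-2)},\, s+1\bigr\}.$$
Given $t < \min\{3s, s+1\}$ it suffices to choose $n$ large enough that $3s - s\cdot 2^{-(n-2)} > t$, whereupon the Fourier bound with $p = 2n$ follows. Note that for $s \geq 2/3$ one has $5s/2 \geq s+1$, so $n = 3$ already saturates the ambient maximum $s+1$; this yields the sharp $p = 6$ case directly from the $n = 3$ sumset bound.

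The core technical content is thus the sumset dimension inequality itself. I would prove it by induction on $n$. For $n = 2$, a classical projection/transversality argument using that the tangents to $\mathbb{P}$ sweep every direction yields $\dim_{\mathrm{H}}(K+K) \geq \min\{2s, s+1\}$. For the inductive step, write $nK = K + (n-1)K$ and seek an $\varepsilon$-dimensional gain over $\dim_{\mathrm{H}}((n-1)K)$. The characteristic halving $3s - t_{n} = \tfrac{1}{2}(3s - t_{n-1})$ of the deficit from the limit $3s$ strongly suggests a Bourgain-style projection-theorem bootstrap, in which an exceptional-direction probability is squared at each step, or equivalently a Furstenberg-set/Vinogradov-type incidence argument (in line with the paper's keywords). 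The principal obstacle is to isolate the correct quantitative form of the $t_{n-1}$-Frostmanness of $\sigma^{*(n-1)}$ that propagates cleanly through one further convolution with $\sigma$ and produces exactly this $1/2$ factor.
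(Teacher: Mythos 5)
Your opening reduction (taking $p = 2n$ and bounding $\|\hat{\sigma}\|_{L^{2n}(B(R))}^{2n} \lesssim R^{2}\int \sigma^{n}(B(y,R^{-1}))\,d\sigma^{n}(y)$, where $\sigma^{n}$ is the $n$-fold convolution) agrees with Remark \ref{rem2}, but the next step is a genuine gap: you propose to deduce the energy bound for the \emph{measure} $\sigma^{n}$ from the Hausdorff-dimension statement about the \emph{set} $nK$ (Theorem \ref{main2}) "via a standard $\delta$-discretisation and a pigeonhole to extract a uniform Frostman sub-measure". This runs the implication in the wrong direction. The paper points out that Theorem \ref{main} implies a (weaker form of) Theorem \ref{main2}, not conversely: a lower bound on $\dim_{\mathrm{H}}(nK)$ says nothing quantitative about how $\sigma^{n}$ distributes its mass, and extracting a Frostman sub-measure only controls the energy of that sub-measure, which is useless here because $I_{t}$ decreases under restriction while \eqref{form83} requires a bound for the full measure. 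Moreover Theorem \ref{main2} is qualitative (Borel sets, all scales), whereas \eqref{form83} is a single-scale statement; passing from the former to the latter is exactly what fails in general. The paper's actual route is: a $\delta$-discretised Furstenberg estimate (Ren--Wang, Theorem \ref{thm:renWang}, transported to translated parabolas by the involution $\Psi(x,y)=(x,x^{2}-y)$, Lemma \ref{lemma1}), upgraded to a "mass versus covering number" statement for $\Pi^{k} = (\mu\ast\sigma)^{k}$ (Proposition \ref{prop1}), and then -- crucially -- the Bourgain--Gamburd $L^{2}$-flattening iteration (Proposition \ref{prop2} and Corollary \ref{cor1}) to convert that into $I_{t}^{\delta}(\sigma^{k})\leq\delta^{-\kappa}$, at the cost of a large non-explicit number of convolutions. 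The "principal obstacle" you acknowledge at the end (propagating quantitative Frostman information of $\sigma^{n-1}$ through one more convolution) is precisely this flattening step, and nothing in your outline substitutes for it.

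The $s\geq 2/3$, $p=6$ claim has the same defect in sharper form: even granting $\dim_{\mathrm{H}}(K+K+K)\geq s+1$, this does not yield $I_{t}(\sigma\ast\sigma\ast\sigma)<\infty$ for all $t<s+1$, and the general flattening argument only gives a much larger, non-explicit $p(s,t)$. In the paper this case is handled by a separate mechanism: the $L^{2}$-Sobolev smoothing estimate of Proposition \ref{mainProp} (proved in Section \ref{s5} via the Fu--Ren incidence theorem for Katz--Tao sets, Theorems \ref{t:fuRen}--\ref{t:fuRenParabolas}), applied with $\mu=\sigma\ast\sigma$, whose $(2s-\epsilon)$-energy bound is imported from prior work. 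Finally, your sketch of the sumset inequality itself leaves its inductive step open ("Bourgain-style bootstrap"); in the paper this step is a short consequence of Proposition \ref{prop3} (translates of $\bar{\mathbb{P}}$ are mapped by $\Psi$ to a bi-Lipschitz-parametrised family of lines) together with the Ren--Wang Furstenberg theorem, giving $\dim_{\mathrm{H}}(nK)\geq\min\{s+t,(3s+t)/2,s+1\}$ with $t=\dim_{\mathrm{H}}((n-1)K)$. So both the reduction from Theorem \ref{main2} to Theorem \ref{main} and the core quantitative estimate are missing.
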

This verifies \cite[Conjecture 1.6]{MR4528124}, and is also related to \cite[Theorem 1.1]{MR4661079}, see Section \ref{s:proofIdeas} for more details. The exponent $\min\{3s,s + 1\}$ is optimal. The detailed argument is given in \cite[Example 1.8]{MR4528124}, but here is the idea. Assume that $\sigma$ is a probability measure supported in the $\delta$-neighbourhood of
\begin{displaymath} \{(x,x^{2}) : x \in (\delta^{s}\mathbb{Z}) \cap [-1,1]\} \subset \mathbb{P}. \end{displaymath}
Such a measure $\sigma$ can satisfy $\sigma(B(x,r)) \lesssim r^{s}$. Then $\sigma$ is supported in the $\delta$-neighbourhood of $(\delta^{s} \mathbb{Z} \times \delta^{2s}\mathbb{Z}) \cap [-1,1]^{2}$. If $s < \tfrac{1}{2}$, this product looks like a $3s$-dimensional arithmetic progression. If $s \geq \tfrac{1}{2}$, the product looks like the $\delta$-neighbourhood of $(\delta^{s}\Z \times \R) \cap [-1,1]^{2}$. In particular, the support of $\sigma$, and also the supports of arbitrarily high convolution powers $\sigma \ast \cdots \ast \sigma$, are "trapped" $\delta$-close to an arithmetic progression of dimension $\min\{3s,s + 1\}$.

 The following fractal geometric result is very closely connected to Theorem \ref{main}:

\begin{thm}\label{main2} Assume that $K \subset \mathbb{P}$ is a Borel set with $\Hd K \geq s$. Then, the Hausdorff dimension of the $n$-fold sum-set $nK = K + \ldots + K$ satisfies
\begin{displaymath} \Hd (nK) \geq \min\{3s - s \cdot 2^{-(n - 2)},s + 1\}, \qquad n \geq 1. \end{displaymath}
In particular, $\Hd (K + K + K) \geq \min\{\tfrac{5}{2}s,s + 1\}$. \end{thm}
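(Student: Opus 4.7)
The plan is to deduce Theorem \ref{main2} from Theorem \ref{main} by bounding the Riesz energy of the $n$-fold self-convolution of a Frostman measure on $K$. For any $s' < s$, Frostman's lemma produces a non-trivial compactly supported Borel probability measure $\sigma$ on $K$ satisfying $\sigma(B(x,r)) \leq r^{s'}$. Then $\sigma^{*n}$ is a probability measure supported on $nK$ and $\widehat{\sigma^{*n}} = \hat\sigma^n$, so to conclude $\Hd(nK) \geq u$ it suffices, by the energy version of Frostman's lemma and Plancherel, to show that
\begin{equation*}
I_u(\sigma^{*n}) \;\asymp\; \int_{\R^2} |\xi|^{u-2}\,|\hat{\sigma}(\xi)|^{2n}\, d\xi \;<\; \infty.
\end{equation*}

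Given $t < \min\{3s', s'+1\}$, apply Theorem \ref{main} with exponent $2n$, valid provided $2n \geq p(s',t)$: this yields $\int_{|\xi|\sim R}|\hat\sigma|^{2n}\, d\xi \leq C R^{2-t}$ for every dyadic $R \geq 1$. Decomposing the energy integral into dyadic annuli,
\begin{equation*}
\int_{\R^2} |\xi|^{u-2} |\hat\sigma|^{2n}\, d\xi
\;\lesssim\; 1 \;+\; \sum_{j\geq 0} 2^{j(u-2)}\!\!\int_{|\xi|\sim 2^j}\! |\hat\sigma|^{2n}\, d\xi
\;\lesssim\; 1 \;+\; \sum_{j\geq 0} 2^{j(u-t)},
\end{equation*}
which converges for every $u < t$. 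Hence $\Hd(nK)\geq t$ whenever $n \geq \lceil p(s',t)/2\rceil$, and letting $s'\nearrow s$ gives the qualitative part of the theorem: every $t < \min\{3s, s+1\}$ is attained once $n \geq n(s,t)$.

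To upgrade this to the explicit bound $\min\{3s - s\cdot 2^{-(n-2)}, s + 1\}$, one must match the inequality $p(s,t) \leq 2n$ with an explicit $t$. The ``$s+1$'' branch is clean: when $s \geq 2/3$, Theorem \ref{main} permits $p = 6$ for every $t < s+1$, so $n = 3$ already yields $\Hd(3K) \geq s+1$, and $\Hd(nK) \geq s+1$ propagates for $n\geq 3$ since $nK \supseteq 3K + (n-3)x_0$ for any fixed $x_0 \in K$. For the ``$3s - s\cdot 2^{-(n-2)}$'' branch, one reads off from the proof of Theorem \ref{main} that the pair $(p, t) = (2n,\, 3s - s\cdot 2^{-(n-2)})$ is admissible; the geometric gap $s \cdot 2^{-(n-2)}$ between $t$ and the optimal $3s$ reflects the iterated losses that accumulate as $t$ approaches the endpoint in the underlying Vinogradov/decoupling analysis. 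The ``in particular'' statement $\Hd(K+K+K)\geq \min\{5s/2, s+1\}$ is the special case $n = 3$: for $s \geq 2/3$ it is the $s+1$-branch above, and for $s \in (0,2/3)$ it corresponds to $p(s, 5s/2) \leq 6$.

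The main obstacle is thus not the reduction itself, which is a routine Fourier-energy computation, but rather extracting from the proof of Theorem \ref{main} the explicit exponent pair $(2n,\, 3s - s\cdot 2^{-(n-2)})$ as an admissible input. One therefore needs the quantitative form of Theorem \ref{main} delivered by the underlying Vinogradov/decoupling machinery, not merely the qualitative ``there exists $p = p(s,t)$'' statement.
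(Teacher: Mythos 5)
Your first step (Frostman measure, energy/Plancherel, dyadic annuli) is fine and gives exactly the \emph{weaker} statement the paper itself notes follows from Theorem \ref{main}: $\Hd(nK)\geq t$ for any fixed $t<\min\{3s,s+1\}$ once $n\geq n(s,t)$, with a non-explicit rate. The genuine gap is the ``upgrade'' step: the claim that one can ``read off from the proof of Theorem \ref{main}'' that the pair $(p,t)=(2n,\,3s-s\cdot 2^{-(n-2)})$ is admissible is unsupported, and in fact contradicts how Theorem \ref{main} is proved. There is no Vinogradov/decoupling machinery behind Theorem \ref{main}; its proof goes through the $L^{2}$-flattening iteration (Proposition \ref{prop2} and Corollary \ref{cor1}), in which each step of the recursion multiplies the number of convolutions by a non-explicit factor $k(\kappa,s,t_{j})$ coming from a pigeonholing over $\lceil 1/\epsilon\rceil$ scales, with $\epsilon$ the (non-explicit) constant in the Ren--Wang Furstenberg theorem. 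Consequently the dependence of $p(s,t)$ on $t$ is far worse than $p=2n$ at $t=3s-s\cdot 2^{-(n-2)}$; the paper states explicitly that the number of convolutions needed in Theorem \ref{main} is much larger than the number of sums in Theorem \ref{main2}. Likewise, your treatment of the ``in particular'' case for $s<2/3$ rests on $p(s,\tfrac52 s)\leq 6$, which is not provided by Theorem \ref{main} (the $p=6$ claim is only for $s\geq 2/3$) and is essentially in the territory of the open Question \ref{q1}. So your argument proves the qualitative part but not the stated explicit bounds.

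The paper's actual proof of Theorem \ref{main2} does not pass through Theorem \ref{main} at all. It writes $nK=(n-1)K+K$, applies the involution $\Psi(x,y)=(x,x^{2}-y)$, which maps translated parabolas $z+\mathbb{P}$ to lines (Proposition \ref{prop3}), so that $\Psi(nK)$ becomes an $(s,t)$-Furstenberg set with $t=\Hd((n-1)K)$; the Ren--Wang theorem then gives
\begin{displaymath}
\Hd(nK)\;\geq\;\min\Bigl\{s+t,\ \tfrac{3s+t}{2},\ s+1\Bigr\},
\end{displaymath}
and iterating the recursion $t_{n}\geq\min\{(3s+t_{n-1})/2,\ s+1\}$ with $t_{1}\geq s$ is precisely what produces the explicit exponent $3s-s\cdot 2^{-(n-2)}$ (and, for $n=3$, the bound $\min\{\tfrac52 s,\ s+1\}$ for all $s$, not just $s\geq 2/3$). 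If you want to salvage your approach, you would need to replace the appeal to Theorem \ref{main} by this geometric iteration, or prove an explicit-exponent version of Theorem \ref{main}, which is not available.
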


\begin{remark}\label{rem2} To explain the meaning of \eqref{form83} let us record some equivalent versions. Notice that if $p = 2n$ is an even integer, then 
\begin{displaymath} \|\hat{\sigma}\|_{L^{p}(B(R))}^{p} = \|\widehat{\sigma^{n}}\|_{L^{2}(B(R))}^{2}, \end{displaymath}
where $\sigma^{n} = \sigma \ast \cdots \ast \sigma$ is the $n$-fold convolution. If \eqref{form83} holds for all $R \geq 1$, then by Plancherel 
\begin{displaymath} \|\sigma^{n}_{\delta}\|_{L^{2}(\R^{2})}^{2} \lesssim_{s,t} \delta^{t - 2}, \qquad \delta > 0, \end{displaymath}
where $\sigma^{n}_{\delta} = (\sigma^{n}) \ast \varphi_{\delta}$, and $\{\varphi_{\delta}\}_{\delta > 0}$ is a standard approximate identity. This further implies $I_{\tau}(\sigma^{n}) \lesssim_{s,t} 1$ for all $\tau < t$, where $I_{\tau}(\nu)$ is the $\tau$-dimensional Riesz-energy (see \cite[Lemma 1.4]{MR4528124}). Conversely, if $I_{\tau}(\nu) \leq 1$, then 
\begin{displaymath} \|\hat{\nu}\|_{L^{2}(B(R))}^{2} \lesssim R^{2 - t}, \, R \geq 1 \quad \text{and} \quad \|\nu_{\delta}\|_{L^{2}(\R^{2})}^{2} \lesssim \delta^{t - 2}, \, \delta \in (0,1]. \end{displaymath}
Theorem \ref{main} can therefore be restated as follows: if $t \in [0,\min\{3s,s + 1\})$, then $I_{t}(\sigma^{n}) < \infty$ for $n \geq 1$ large enough. Moreover, if $s \geq 2/3$, then $I_{t}(\sigma \ast \sigma \ast \sigma) < \infty$ for all $t < s + 1$.
\end{remark}

\begin{remark} A weaker version of Theorem \ref{main2} follows directly from Theorem \ref{main}. In that version, $\Hd (nK)$ grows at an effective but non-explicit rate, determined by the dependence between the numbers $s,t$ and $p(s,t)$ in Theorem \ref{main}. For the details of this argument, see how \cite[Corollary 1.3]{MR4528124} is deduced from \cite[Theorem 1.1]{MR4528124} on \cite[p. 4]{MR4528124}. We will, however, give an independent argument for Theorem \ref{main2} in Section \ref{s2}, since this serves as a good warm-up for the proof of Theorem \ref{main}. \end{remark}

Theorem \ref{main} also yields upper bounds for the number of $\delta$-separated solutions to the quadratic Vinogradov system associated to "$s$-dimensional" subsets of the parabola:
\begin{cor}\label{cor2} For every $0 \leq s \leq 1$ and $t \in [0,\min\{3s,s + 1\})$, there exists $n(s,t) \in \N$ such that the following holds for all $n \geq n(s,t)$. Let $P \subset \mathbb{P}$ be a $\delta$-separated $(\delta,s)$-set. Then,
\begin{displaymath} |\{(p_{1},\ldots,p_{n},q_{1},\ldots,q_{n}) \in P^{2n} : |(p_{1} + \ldots + p_{n}) - (q_{1} + \ldots + q_{n})| \leq \delta\}| \leq \delta^{t}|P|^{2n}. \end{displaymath} \end{cor}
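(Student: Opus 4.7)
The plan is to derive the counting estimate from the Fourier bound of Theorem~\ref{main} via the standard dictionary between $L^{2n}$-norms of Fourier transforms of Frostman measures and $2n$-fold additive energies. Write $N$ for the count on the left-hand side of the corollary.

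Pick $t' \in (t, \min\{3s, s+1\})$, let $p = p(s,t')$ be the exponent from Theorem~\ref{main}, and choose $n(s,t) \in \mathbb{N}$ with $2n \geq p$. To $P$, associate the Frostman surrogate $\mu_P := \delta^{s-1} \sum_{p \in P} \mathcal{H}^1|_{A_p}$, where $A_p \subset \mathbb{P}$ is the arc of $\mathcal{H}^1$-length $\delta$ centered at $p$. The $(\delta,s)$-set hypothesis forces $\mu_P(B(x,r)) \lesssim r^s$, so Theorem~\ref{main} applies to $\mu_P$. Combined with H\"older interpolation against the trivial bound $\|\widehat{\mu_P}\|_\infty \leq \mu_P(\mathbb{R}^2) \lesssim 1$, this upgrades to $\|\widehat{\mu_P}\|_{L^{2n}(B(R))}^{2n} \lesssim R^{2-t'}$ for all $R \geq 1$.

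The second step transfers this bound into a count via Parseval. Take a Selberg-type majorant $\phi \in \mathcal{S}(\mathbb{R}^2)$ with $\phi \geq \mathbf{1}_{B(1)}$, $\widehat{\phi} \geq 0$, $\operatorname{supp}\widehat{\phi} \subset B(1)$, and set $\phi_\delta(\xi) := \phi(\delta\xi)$; then $\widehat{\phi_\delta} \gtrsim \delta^{-2} \mathbf{1}_{B(c\delta)}$. Using the identity $|\widehat{\mu_P}|^{2n} = \widehat{\mu_P^n \ast \widetilde{\mu_P^n}}$, Parseval gives
\[
    \delta^{-2} \cdot (\mu_P^n \ast \widetilde{\mu_P^n})(B(0, c\delta)) \lesssim \int \widehat{\phi_\delta} \, d(\mu_P^n \ast \widetilde{\mu_P^n}) = \int \phi_\delta \, |\widehat{\mu_P}|^{2n} \, d\xi \lesssim \delta^{t'-2},
\]
the last inequality by a dyadic decomposition of $\phi_\delta$ into shells $B(2^{k+1}/\delta) \setminus B(2^k/\delta)$, summing the $L^{2n}$-Fourier estimates via Schwartz decay. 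On the other hand, a co-area computation on the map $F: (t_i) \mapsto \sum_i \epsilon_i \gamma(t_i)$ (arclength parametrization of $\mathbb{P}$, $\epsilon_i = \pm 1$) shows that each of the $N$ solution tuples contributes $\gtrsim \delta^{2ns}$ to the convolution measure, whence $N \delta^{2ns} \lesssim \delta^{t'}$ and thus $N \lesssim \delta^{t'-2ns}$. Since the $(\delta,s)$-set convention forces $|P| \asymp \delta^{-s}$ and $t' > t$, this yields the desired bound $N \leq \delta^t |P|^{2n}$.

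The delicate step is the per-tuple lower bound $\gtrsim \delta^{2ns}$ on the convolution measure: the Jacobian of $F$ is well-conditioned only when the parabola tangents $\{\gamma'(t_{p_i})\}$ span $\mathbb{R}^2$ robustly, which fails for tuples whose points cluster in a small sub-arc of $\mathbb{P}$. The remedy is to invoke the $(\delta,s)$-set bound on $|P \cap B(x,r)|$, which ensures that clustered tuples form a controllably small fraction of $N$; alternatively, one pigeonholes on the diameter of the cluster and sums the per-scale estimates, absorbing the logarithmic losses into the slack $t' - t > 0$. Apart from this somewhat tedious but routine bookkeeping, the corollary reduces cleanly to Theorem~\ref{main}.
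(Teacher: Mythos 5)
Your overall route -- feed a Frostman measure built from $P$ into Theorem \ref{main}, then convert the Fourier bound at scale $R=\delta^{-1}$ into a count of $\delta$-approximate solutions -- is the same as the paper's, which simply takes the normalised counting measure $\sigma=|P|^{-1}\mathcal{H}^{0}|_{P}$, notes that the count equals $|P|^{2n}\int\sigma^{n}(B(z,\delta))\,d\sigma^{n}(z)$ \emph{exactly}, bounds this by $\delta^{2}\|\sigma^{n}_{4\delta}\|_{2}^{2}$ by a one-line mollification, and invokes the $L^{2}$ reformulation of Theorem \ref{main} from Remark \ref{rem2}. However, your write-up has a normalisation error coming from the paper's convention: by Definition \ref{def:deltaSSet}, a $(\delta,s)$-set satisfies the \emph{relative} condition $|P\cap B(x,r)|\lesssim r^{s}|P|$, which forces only $|P|\gtrsim\delta^{-s}$, not $|P|\asymp\delta^{-s}$ (a full $\delta$-net of $\mathbb{P}$ is a $(\delta,s)$-set for every $s\le 1$ and has $|P|\sim\delta^{-1}$). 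With your surrogate $\mu_{P}=\delta^{s-1}\sum_{p}\mathcal{H}^{1}|_{A_{p}}$ the total mass is $\delta^{s}|P|$, which can be $\gg 1$, so neither the Frostman hypothesis of Theorem \ref{main} nor your bound $\|\widehat{\mu_P}\|_{\infty}\lesssim 1$ holds, and the final step ``$N\lesssim\delta^{t'-2ns}$ together with $|P|\asymp\delta^{-s}$ gives $N\le\delta^{t}|P|^{2n}$'' breaks down. The fix is to normalise by the cardinality, $\mu_{P}=(\delta|P|)^{-1}\sum_{p}\mathcal{H}^{1}|_{A_{p}}$ (or use the counting measure as the paper does); then the relative $(\delta,s)$-condition gives $\mu_{P}(B(x,r))\lesssim r^{s}$ at all scales and the per-tuple contribution becomes $(\delta|P|)^{-2n}\delta^{2n}=|P|^{-2n}$, so the conclusion comes out relative to $|P|^{2n}$ as required.

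The second, more substantive problem is the step you yourself flag as delicate: the per-tuple lower bound via a co-area computation for $F(t_{1},\ldots,t_{2n})=\sum\epsilon_{i}\gamma(t_{i})$, with degenerate (clustered) tuples discarded. The proposed remedy is not justified: nothing guarantees that tuples clustering in a short sub-arc are a ``controllably small fraction of $N$'' -- near-diagonal tuples $p_{i}\approx q_{i}$ can carry a large share of the solutions, and the $(\delta,s)$-condition bounds their number relative to $|P|^{2n}$, not relative to $N$, so discarding them could discard most of what you are trying to count. Fortunately the entire Jacobian/transversality discussion is unnecessary: you need a \emph{lower} bound on $(\mu_{P}^{n}\ast\widetilde{\mu_{P}^{n}})(B(0,c\delta))$, not a pushforward density bound. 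For each solution tuple, the whole product box $A_{p_{1}}\times\cdots\times A_{q_{n}}$ is mapped by the signed sum into $B(0,(n+1)\delta)$ (each point moves by at most $\delta/2$ within its arc), and distinct solution tuples give essentially disjoint boxes in the $2n$-fold product space because $P$ is $\delta$-separated; hence the contributions simply add, giving $N\cdot|P|^{-2n}\lesssim(\mu_{P}^{n}\ast\widetilde{\mu_{P}^{n}})(B(0,C_{n}\delta))$ with no conditioning of tangent vectors whatsoever. With these two repairs (correct normalisation, product-space lower bound in place of co-area), your argument closes and is essentially the paper's proof carried out with an arc-length mollification and a Fourier majorant instead of the counting measure and the mollifier $\psi_{4\delta}$; the only thing the smoothing buys is that the Frostman condition holds at all scales, whereas the paper must note that its proof of Theorem \ref{main} at scale $R=\delta^{-1}$ only uses non-concentration on $[R^{-1},1]$.
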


The notion of $(\delta,s)$-sets is introduced in Definition \ref{def:deltaSSet}, but this simply means that the normalised counting measure $\sigma = |P|^{-1}\mathcal{H}^{0}|_{P}$ satisfies the hypothesis of Theorem \ref{main} for scales $r \in [\delta,1]$. Consequently, Corollary \ref{cor2} is a straightforward application of Theorem \ref{main} to this measure $\sigma$. The details are recorded in Section \ref{s3}.
We also recall that, as in Theorem \ref{main}, the threshold $\min\{3s,s + 1\}$ for $t$ in the statement of Corollary \ref{cor2} is optimal, again by \cite[Example 1.8]{MR4528124}. As far as we know, it is possible that $n(s,t) = 3$ for all $0 \leq s \leq 1$ and $t \in [0,\min\{3s,s + 1\})$ (provided that $\delta > 0$ is small enough in terms of $s,t$).

Corollary \ref{cor2} should be compared with the result of Mudgal \cite[Corollary 1.2]{MR4535019}, which is a counterpart of Corollary \ref{cor2} in the discrete case. We also refer to \cite[Section 1.1]{MR4528124} for additional discussion. We thank Josh Zahl for bringing Mudgal's work into our attention.

\subsection{Proof ideas}\label{s:proofIdeas} In Theorem \ref{main2}, the first observation is that
\begin{displaymath} nK = (n - 1)K + K \end{displaymath}
contains many translates of $s$-dimensional subsets of the parabola $\mathbb{P}$. More precisely, writing $Z := (n - 1)K$, we have $nK \supset \bigcup\{z + K : z \in Z\}$. Note that 
\begin{displaymath} z + K \subset z + \mathbb{P} \quad \text{and} \quad \Hd (z + K) \geq s. \end{displaymath}
The second observation is that the diffeomorphism $\Psi(x,y) = (x,x^{2} - y)$ sends all these (subsets of) parabolas into (subsets of) lines, see Proposition \ref{prop3}. Since the map $z\mapsto \Psi(z+\mathbb{P})$ is locally bi-Lipschitz, it follows that $\Psi(nK) = \Psi(Z + K)$ is an \emph{$(s,t)$-Furstenberg set}, where $t := \Hd Z$ (see Remark \ref{rem1}). Finally, using the recent sharp result of Ren and Wang \cite{2023arXiv230808819R} on the dimension of Furstenberg sets, we may infer that 
\begin{displaymath} \Hd (nK) = \Hd \Psi(nK) \geq \min\left\{s + t,\tfrac{3s + t}{2},s + 1\right\}. \end{displaymath}
Keeping in mind that $t = \Hd Z = \Hd (n - 1)K$, the inequality above can be iterated to derive Theorem \ref{main2}, see Section \ref{s2} for the details.

The connection between Theorems \ref{main}-\ref{main2} and the Furstenberg set problem was already observed in \cite{MR4528124}. However, the reduction to the Furstenberg set problem in that paper was significantly more cumbersome than the one enabled by the map $\Psi$. The idea of transferring point-parabola incidences to point-line incidences using the map $\Psi$ was also employed by Pudl\'ak in \cite{MR2249270}.

The ideas outlined above are also present in the proof of Theorem \ref{main} (except the case $s \geq \tfrac{2}{3}$, discussed separately below). As mentioned in Remark \ref{rem2}, Theorem \ref{main} implies $I_{t}(\sigma^{n}) < \infty$ for all $t \in [0,\min\{3s,s + 1\})$, and $n \geq n(s,t)$. This yields $\Hd (nK) \geq t$, where $K = \spt \sigma$. Therefore, Theorem \ref{main} can be viewed as a quantitative version of Theorem \ref{main2}, with a poorer (and non-explicit) dependence of $n(s,t)$ on $s,t$.

Conversely, it turns out that Theorem \ref{main} can be deduced from the proof strategy of Theorem \ref{main2}, outlined above: the additional technical component is the \emph{$L^{2}$-flattening} method of Bourgain and Gamburd \cite{BourgainGamburd08}. This method is applied in a manner similar to the recent work of the first author with de Saxc\'e and Shmerkin \cite{2023arXiv230903068O}, see Corollary \ref{cor1}. This part of the proof is responsible for the (much) higher number of convolutions needed in Theorem \ref{main} than the number of sums needed in Theorem \ref{main2}.

Possibly a smaller number of convolutions and sums may suffice in Theorems \ref{main}-\ref{main2}:

\begin{question}\label{q1} Let $s \in [0,1]$, and let $\sigma$ be a Borel measure on $\mathbb{P}$ satisfying $\sigma(B(x,r)) \leq r^{s}$ for all $x \in \R^{2}$ and $r > 0$. Is it true that
\begin{displaymath} \|\hat{\sigma}\|_{L^{6}(B(R))} \lesssim_{\epsilon,s} R^{(2 - \min\{3s,s + 1\})/6 + \epsilon}, \qquad R \geq 1, \, \epsilon > 0? \end{displaymath}
Equivalently, is $I_{t}(\sigma \ast \sigma \ast \sigma) < \infty$ for $t \in [0,\min\{3s,s + 1\})$? \end{question} 
The fractal geometric counterpart of this question proposes that if $K \subset \mathbb{P}$ is a Borel set with $\Hd K = s$, then $\Hd (K + K + K) \geq \min\{3s,s + 1\}$. This problem can be viewed as the continuum analogue of a question Bourgain and Demeter \cite[Question 2.13]{MR3374964}.

\subsection{The range $s \geq \tfrac{2}{3}$} Theorem \ref{main} implies a positive answer to Question \ref{q1} when $s \geq \tfrac{2}{3}$. In this range, the proof of Theorem \ref{main} differs significantly from the proof outline described in the previous section. In fact, the main technical tool is the following $L^{2}$-Sobolev smoothing estimate:

\begin{proposition}\label{mainProp} Let $s \in (\tfrac{1}{2},1]$ and $t \in (0,2)$. Let $\mu$ be a Borel measure on $B(1)$ with $I_{t}(\mu) \leq 1$, and let $\sigma$ be a Borel measure on $\mathbb{P}$ satisfying $\sigma(B(x,r)) \leq r^{s}$ for all $x \in \R^{2}$ and $r > 0$. Then, $I_{\zeta}(\mu \ast \sigma) \lesssim_{\zeta} 1$ for all $0 \leq \zeta < \zeta(s,t)$, where $\zeta(s,t) = \min\{t + (2s - 1),s + 1\}$.
\end{proposition}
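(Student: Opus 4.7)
The natural strategy is to work entirely on the Fourier side. By Plancherel,
\[
I_{\zeta}(\mu \ast \sigma) = c_{\zeta} \int_{\R^{2}} |\hat{\mu}(\xi)|^{2} |\hat{\sigma}(\xi)|^{2} |\xi|^{\zeta - 2}\, d\xi,
\]
so after a dyadic decomposition in $|\xi|$ it suffices to prove, for each dyadic $R \geq 1$ and every $\zeta' < \min\{t + (2s-1),\, s + 1\}$, the annulus estimate
\[
\mathcal{J}(R) := \int_{|\xi| \sim R} |\hat{\mu}(\xi)|^{2} |\hat{\sigma}(\xi)|^{2}\, d\xi \lesssim R^{2 - \zeta'}.
\]

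I would then exploit the curvature of $\mathbb{P}$ by decomposing it into arcs $\{\theta\}$ of length $R^{-1/2}$, writing $\sigma = \sum_{\theta} \sigma_{\theta}$ with $\sigma_{\theta} = \sigma|_{\theta}$. The Frostman hypothesis gives $\sigma(\theta) \lesssim R^{-s/2}$, and each $\hat{\sigma}_{\theta}$ is essentially concentrated on a dual plate $T_{\theta} \subset \{|\xi| \sim R\}$ of dimensions $R \times R^{1/2}$ with long direction along the outward normal to $\mathbb{P}$ at $\theta$. Parabolic curvature forces these plates to have bounded overlap on the annulus---the $L^{2}$-orthogonality content of Bourgain--Demeter parabolic decoupling---which gives
\[
\mathcal{J}(R) \lesssim \sum_{\theta} \sigma(\theta)^{2} \int_{T_{\theta}} |\hat{\mu}(\xi)|^{2}\, d\xi.
\]

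Each tube-energy $\int_{T_{\theta}} |\hat{\mu}|^{2}$ I would control via the unfolding identity
\[
\int_{T_{\theta}} |\hat{\mu}|^{2}\, d\xi \lesssim |T_{\theta}| \cdot (\mu \ast \tilde{\mu})(T_{\theta}^{\ast}),
\]
where $T_{\theta}^{\ast} \subset \R^{2}$ is the physical dual plate of dimensions $R^{-1} \times R^{-1/2}$ whose short direction is the normal to $\mathbb{P}$ at $\theta$, and $\tilde{\mu}$ is the reflection of $\mu$. The isotropic Frostman estimate arising from $I_{t}(\mu) \leq 1$ gives $(\mu \ast \tilde{\mu})(T_{\theta}^{\ast}) \lesssim R^{-t/2}$, which, summed against $\sum_{\theta} \sigma(\theta)^{2} \lesssim R^{-s/2}$, produces the baseline bound $\mathcal{J}(R) \lesssim R^{(3 - t - s)/2}$. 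Sharpening the plate estimate by exploiting the thin $R^{-1}$ direction of $T_{\theta}^{\ast}$---through a directional Frostman / Marstrand-type projection refinement---then recovers the two branches $t + (2s - 1)$ and $s + 1$ of the target exponent, corresponding respectively to the regimes $t + s \leq 2$ (curvature-limited gain) and $t + s \geq 2$ (parabolic saturation).

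The main obstacle will be making the tube-energy bound sharp \emph{uniformly} in $\theta$: the naive isotropic estimate is not tight for arcs whose normal directions coincide with those along which $\mu$ is concentrated. I would handle this by a two-scale pigeonhole argument, partitioning the arcs into a ``good'' collection on which an averaged projection-type estimate gives the improved bound, and a small collection of ``bad'' arcs whose total $\sigma$-mass is small enough to absorb the loss coming from the isotropic estimate. The threshold $s > 1/2$ enters precisely at this step: the parabolic gain $2s - 1$ over the trivial Frostman bound is strictly positive only in this range, below which a single convolution with $\sigma$ provides no net smoothing.
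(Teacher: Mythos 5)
Your route (Plancherel, dyadic annuli, decomposition of $\mathbb{P}$ into $R^{-1/2}$-arcs, plate orthogonality, then tube-energy bounds for $\mu$) is genuinely different from the paper's, which never works on the frequency side at all: there the estimate is discretised in physical space, the energy hypothesis is pigeonholed into Frostman-type $\delta$-measures, and the key input is the Fu--Ren incidence theorem for Katz--Tao $(\delta,t)$-sets of tubes versus $(\delta,s)$-sets of squares, transported to translated parabolas by the map $\Psi(x,y)=(x,x^{2}-y)$; the bound then comes from counting ``$r$-rich'' squares. Unfortunately your sketch has a genuine gap at its central step. The inequality $\mathcal{J}(R)\lesssim\sum_{\theta}\sigma(\theta)^{2}\int_{T_{\theta}}|\hat{\mu}|^{2}$ presupposes that each $\hat{\sigma}_{\theta}$ is concentrated on a dual plate inside the annulus and that these plates are boundedly overlapping. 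For a \emph{fractal} measure on an arc this fails: $\sigma_{\theta}$ has no smoothness, so $\hat{\sigma}_{\theta}$ has no decay whatsoever away from any plate (a point mass on the arc is already a counterexample), and the pointwise almost-orthogonality $|\sum_{\theta}\hat{\sigma}_{\theta}|^{2}\lesssim\sum_{\theta}|\hat{\sigma}_{\theta}|^{2}$ on $\{|\xi|\sim R\}$ is simply not available. Controlling the cross terms $\int_{|\xi|\sim R}\hat{\sigma}_{\theta}\overline{\hat{\sigma}_{\theta'}}|\hat{\mu}|^{2}$ is essentially equivalent to the original bilinear problem, so this step cannot be waved through by ``decoupling-type $L^{2}$-orthogonality''; decoupling statements apply to functions with genuine Fourier support conditions, not to restrictions of a Frostman measure to arcs.

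Even if one grants the plate reduction, the quantitative part is incomplete. Your baseline computation gives $\mathcal{J}(R)\lesssim R^{-s/2}\cdot R^{3/2}\cdot R^{-t/2}=R^{(3-s-t)/2}$, while the target on the main branch is $R^{2-\zeta'}$ with $\zeta'$ close to $t+2s-1$, i.e. $R^{3-t-2s}$; these match only when $3s+t\le 3$, so in the remaining range the entire content of the proposition is carried by the ``directional Frostman / Marstrand-type projection refinement'' and the good/bad-arc pigeonhole, which you leave unspecified. That missing refinement is exactly where the paper invokes a nontrivial incidence input (Fu--Ren, Theorem 5.2 of their paper, in the regime $s+t\le 2$), and it is not something recoverable from the isotropic bound $(\mu\ast\tilde{\mu})(B(0,\rho))\lesssim\rho^{t}$ alone. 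A smaller remark: the restriction $s>\tfrac12$ does not actually enter any step of the argument (the paper notes its proof never uses it explicitly); it only marks the threshold below which the conclusion is already implied by $I_{t}(\mu)\le 1$, so your claim that the bad-arc absorption ``uses $s>1/2$ precisely'' is a sign that the intended mechanism there is not yet pinned down.
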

The case $s \geq \tfrac{2}{3}$ of Theorem \ref{main} follows so easily from this proposition that we record the details immediately. 
 
\begin{proof}[Proof of Theorem \ref{main} in the case $s \geq \tfrac{2}{3}$] Fix $s \in [\tfrac{2}{3},1]$, and let $\sigma$ be a Borel measure on $\mathbb{P}$ satisfying $\sigma(B(x,r)) \leq r^{s}$. The first part of \cite[Theorem 1.1]{MR4528124} implies $I_{2s - \epsilon}(\sigma \ast \sigma) \lesssim_{\epsilon} 1$ for all $\epsilon > 0$. Next, Proposition \ref{mainProp} implies $I_{\zeta(s,2s - \epsilon) - \epsilon}(\sigma \ast \sigma \ast \sigma) \lesssim_{\epsilon} 1$. Finally, note that since $s \geq \tfrac{2}{3}$, one has $\zeta(s,2s) = s + 1$, and the number $\zeta(s,2s - \epsilon) - \epsilon$ can be made arbitrarily close to $s + 1$ by letting $\epsilon \to 0$. This yields \eqref{form83} with $p = 6$, recalling the equivalences discussed in Remark \ref{rem2}. \end{proof}

Finally, a natural open question concerns the generalisation of Theorems \ref{main}, \ref{main2}, and Proposition \ref{mainProp} to subsets of curves (or even graphs) more general than the parabola. In \cite{MR4661079}, Demeter and Dasu prove a variant of Theorem \ref{main}, where $p = 6$ and $t \in [0,2s + \beta)$ for some $\beta > 0$, but the parabola $\mathbb{P}$ can be replaced by any graph of a $C^{3}$-function $\gamma \colon [-1,1] \to \R$ satisfying 
\begin{equation}\label{form22} \inf \{|\ddot{\gamma}(x)| : x \in [-1,1]\} > 0. \end{equation}
For the proof of Theorem \ref{main} in the present paper to work for more general graphs than $\mathbb{P}$, we would need the following variant of the Furstenberg set theorem:
\begin{question}\label{q2} Let $s \in (0,1]$ and $t \in [0,2]$. Assume that $\Gamma$ is the graph of a $C^{\infty}$-function, or possibly a $C^{2}$-function, $\gamma \colon [-1,1] \to \R$ satisfying \eqref{form22}. Let $F \subset \R^{2}$ be a set with the following property: there exists another set $K \subset \R^{2}$ with $\Hd K \geq t$ such that
\begin{displaymath} \Hd (F \cap (z + \Gamma)) \geq s, \qquad z \in K. \end{displaymath}
Is it true that $\Hd F \geq \min\{s + t,(3s + t)/2,s + 1\}$?
\end{question}

This is true if $\Gamma = \mathbb{P}$ as follows from the proof of Theorem \ref{main2} in Section \ref{s2}. In fact, the proof goes through for any $\Gamma$ which admits a diffeomorphism $\Psi$ such that $\Psi(z+\Gamma)$ is a line, for every $z\in\R^2$. A characterization of such pairs $(\Gamma, \Psi)$ was performed in \cite{MR4568803}: For example, also the graphs of $x\mapsto \log x$ and $x\mapsto e^{-x}$ admit such a function $\Psi$. We thank the referee for bringing the work \cite{MR4568803} into our attention. 

\subsection{Outline of the paper} Section \ref{s2} contains the proof of Theorem \ref{main2}. This is the technically easiest result, but the argument already contains many of the ingredients needed for the proof of Theorem \ref{main}. The proof of Theorem \ref{main} is conducted in Sections \ref{s:prelim}-\ref{s3}. Section \ref{s3} also contains the proof of Corollary \ref{cor2}. Finally, the $L^{2}$-Sobolev smoothing estimate in Proposition \ref{mainProp} is established in Section \ref{s5}.

\begin{notation}
	We write $f\lesssim g$ if there exists an absolute constant $C>0$ such that $f\le C g$. If $C$ depends on some parameter $\epsilon$, we will write $f\lesssim_{\epsilon} g$. In case $f\lesssim g\lesssim f$ we write $f\sim g$, while $f\sim_{\epsilon} g$ denotes $f\lesssim_{\epsilon} g\lesssim_{\epsilon} f$.
	If $\mathcal F$ is a collection of subsets of $\mathbb R^2$, we denote $\cup \mathcal F:= \bigcup_{Q\in\mathcal F}Q\subseteq \mathbb R^2.$
\end{notation}

\subsection*{Acknowledgements} We thank the reviewers for a careful reading of the manuscript, and for many useful comments.

\section{Proof of Theorem \ref{main2}}\label{s2}

We need two tools for the proof of Theorem \ref{main2}. The first one is the resolution by Ren and Wang \cite{2023arXiv230808819R} of the \emph{Furstenberg set conjecture}, proposed by Wolff \cite{MR1692851,Wolff99} in the late 90s.
\begin{thm}\label{t:renWangQualitative} Let $s \in (0,1]$ and $t \in [0,2]$. Let $\mathcal{L}$ be a set of lines in $\R^{2}$ with $\Hd \mathcal{L} \geq t$, and assume that $K \subset \R^{2}$ is a set satisfying $\Hd (K \cap \ell) \geq s$ for all $\ell \in \mathcal{L}$. Then,
\begin{equation}\label{form10} \Hd K \geq \min\left\{s + t,\tfrac{3s + t}{2},s + 1\right\}. \end{equation} \end{thm}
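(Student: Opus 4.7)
The plan is to reduce the continuum statement to a $\delta$-discretised incidence estimate between points and tubes, and then handle each of the three branches in the minimum separately. By Frostman's lemma applied to a measure on $\calL$ (viewed as a subset of line-space), one extracts at scale $\delta$ a $(\delta, t)$-set of $\delta$-tubes $\calL_{\delta}$ of cardinality $\sim \delta^{-t}$. Similarly, a Frostman measure on each intersection $K \cap \ell$ produces, for every $\ell \in \calL_{\delta}$, a $(\delta, s)$-set $P_{\ell}$ of $\sim \delta^{-s}$ $\delta$-balls contained in $K_{\delta} \cap \ell_{\delta}$. The task is then to show that the $\delta$-covering number of $K$ satisfies $|K_{\delta}|_{\delta} \gtrsim_{\epsilon} \delta^{-u + \epsilon}$ for $u = \min\{s+t, (3s+t)/2, s+1\}$ and every $\epsilon > 0$, from which the Hausdorff bound follows by a standard limit argument.

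Two of the three branches are comparatively soft. The bound $u = s + t$ is essentially trivial: the total number of incidences $(p, \ell)$ with $p \in P_{\ell}$ is $\sim \delta^{-(s+t)}$, and since two $\delta$-tubes meeting at angle $\theta$ share a region of area $\lesssim \delta^{2}/\sin\theta$, a pigeonhole over angular scales bounds the multiplicity at a typical point by $O(\log(1/\delta))$, which yields the claim. The bound $u = s + 1$ is a $\delta$-discretised Szemer\'edi--Trotter estimate for points and $\delta$-tubes; the $(\delta, s)$-non-concentration hypothesis on each $P_{\ell}$ plays the role of the classical no-three-concurrent-lines condition and rules out the pencil example.

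The genuinely hard case is the middle regime $u = (3s+t)/2$, where the sharp Ren--Wang content lives. I would attack it via the high/low frequency decomposition of Guth--Solomon--Wang: writing $f = \sum_{\ell \in \calL_{\delta}} \mathbf{1}_{\ell_{\delta}}$, one splits $f = f_{\mathrm{low}} + f_{\mathrm{high}}$ at a Fourier threshold $\delta^{-a}$ with $a \in (0,1)$ to be optimised. The low-frequency part is controlled in $L^{2}$ by Plancherel together with the fact that $\widehat{\mathbf{1}_{\ell_{\delta}}}$ is concentrated on a $\delta^{-1} \times 1$ plate dual to $\ell_{\delta}$, while the high-frequency part is bounded using C\'ordoba's planar $L^{4}$-Kakeya inequality. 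Substituting the $(\delta, t)$-structure of $\calL_{\delta}$ and the $(\delta, s)$-structure of each $P_{\ell}$, and optimising in $a$, produces the exponent $(3s+t)/2$. The main obstacle is the propagation of non-concentration across scales: the $(\delta, s)$-hypothesis on each $P_{\ell}$ need not pass to sub-scales, so the high/low estimate has to be interlocked with an induction on the parameter pair $(s, t)$ and a two-ends reduction in the spirit of Wolff. This scale-by-scale bookkeeping is the technical heart of the Ren--Wang argument and the reason earlier Fourier-analytic approaches stopped short of the conjectured exponent.
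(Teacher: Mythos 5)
This statement is cited in the paper from Ren and Wang \cite{2023arXiv230808819R}; the paper offers no proof of its own and uses Theorem \ref{t:renWangQualitative} as an external black box, so there is no ``paper's own argument'' to compare your sketch against. Judged as a standalone attempt, what you have written is a road map rather than a proof, and you are candid about that: you say yourself that the $(3s+t)/2$ regime is ``the technical heart of the Ren--Wang argument'' and that the high/low decomposition must be ``interlocked with an induction on the parameter pair $(s,t)$ and a two-ends reduction,'' but you do not supply that interlocking. That is precisely the content of the theorem; without it the sketch does not establish \eqref{form10} in the range $s < t < 2 - s$.

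A few of the steps you label as soft are also not as soft as stated. The $s+1$ branch is not a routine $\delta$-discretised Szemer\'edi--Trotter count: for $t < 2$ the statement $\Hd K \geq s+1$ under a $(\delta,s)$ non-concentration hypothesis along each tube was itself a substantial theorem (it is closely tied to the Fu--Ren incidence bound that this paper quotes as Theorem \ref{t:fuRen}), and the ``no three concurrent lines'' analogy does not carry over directly because $\delta$-tubes overlap in bushes and hairbrushes in ways that points on lines do not. Likewise, the $s+t$ branch only dominates the minimum when $t \leq s$; your pigeonhole-over-angles argument for $O(\log(1/\delta))$ multiplicity at a typical point needs the $(\delta,t)$-separation of the tube family to be exploited carefully, and in the form given it proves at best $s + \min\{s,t\}$, which happens to agree with the minimum in that range but is not the general bound. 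In short: the framework is the right one, the branch analysis correctly identifies where the difficulty lies, but the single hard case is deferred rather than proved, and two of the ``easy'' cases are understated. Since the paper treats the result as known, the appropriate resolution here is to cite Ren--Wang directly rather than reprove it.
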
 
\begin{remark}\label{rem1} Sets $K \subset \R^{2}$ with the properties stated in Theorem \ref{t:renWangQualitative} are called \emph{$(s,t)$-Furstenberg sets}. \end{remark}

There are several equivalent ways to define $\Hd \mathcal{L}$. Perhaps the most elegant one is to place some natural metric on the affine Grassmannian $\mathcal{A}(2,1)$, consisting of all lines in $\R^{2}$, and define the Hausdorff dimension of subsets of $\mathcal{A}(2,1)$ using this metric. A concrete choice for the metric is
\begin{equation}\label{form14} d_{\mathcal{A}(2,1)}(\ell_{1},\ell_{2}) := \|\pi_{L_{1}} - \pi_{L_{2}}\| + |a_{1} - a_{2}|, \qquad \ell_{j} = a_{j} + L_{j}. \end{equation}
Here $L_{j} \in \mathcal{G}(2,1)$ is a $1$-dimensional subspace of $\R^{2}$, and $a_{j} \in L_{j}^{\perp}$.

A second common way to define $\Hd \mathcal{L}$ is to parametrise non-vertical lines of $\R^{2}$ by 
\begin{equation}\label{form20} \ell(a,b) := \{(x,y) : y = ax + b\}, \qquad (a,b) \in \R^{2}, \end{equation}
and set $\Hd \mathcal{L} := \Hd \{(a,b) \in \R^{2} : \ell(a,b) \in \mathcal{L}\}$. This definition has the advantage of being very explicit, and the caveat of only making sense if $\mathcal{L}$ consists of non-vertical lines (the range of $(a,b) \mapsto \ell(a,b)$). For families of non-vertical lines, the two notions of dimension coincide: it is easy to check that the map $(a,b) \mapsto \ell(a,b)$ is locally bi-Lipschitz $\R^{2} \to (\mathcal{A}(2,1),d_{\mathcal{A}(2,1)})$.

For the proof of Theorem \ref{main}, we will eventually need a more quantitative version of Theorem \ref{t:renWangQualitative} (stated as Theorem \ref{thm:renWang}), but the qualitative version above is sufficient for Theorem \ref{main2}. For the proof of Theorem \ref{main2}, the second main tool is the map $\Psi \colon \R^{2} \to \R^{2}$,
\begin{displaymath} \Psi(x,y) := (x,x^{2} - y), \qquad (x,y) \in \R^{2}. \end{displaymath}
\begin{proposition}\label{prop3} The map $\Psi$ has the following properties:
\begin{itemize}
\item[(a)] $\Psi \circ \Psi = \mathrm{Id}$. In particular, $\Psi$ is invertible and locally bi-Lipschitz.
\item[(b)] For $(x_{0},y_{0}) \in \R^{2}$, define the line $\ell_{(x_{0},y_{0})} = \{(x,y) : y = y_{0} + 2x_{0}(x - x_{0})\}$. Then,
\begin{displaymath} \Psi(\ell_{(x_{0},y_{0})}) = \Psi(x_{0},y_{0}) + \bar{\mathbb{P}}, \end{displaymath}
where $\bar{\mathbb{P}} = \{(x,x^{2}) : x \in \R\}$ is the full upward pointing parabola. Conversely, by \textup{(a)}, $\Psi$ maps the translated parabola $\Psi(x_{0},y_{0}) + \bar{\mathbb{P}}$ onto $\ell_{(x_{0},y_{0})}$.
\item[(c)] The map $z \mapsto \Psi(z + \bar{\mathbb{P}})$ is locally bi-Lipschitz $\R^{2} \to (\mathcal{A}(2,1),d_{\mathcal{A}(2,1)})$, recall \eqref{form14}. In particular: if $\mathcal{P} := \{z + \bar{\mathbb{P}} : z \in Z\}$ is a family of translated parabolas indexed by a set $Z \subset \R^{2}$, then the line family $\Psi(\mathcal{P})$ satisfies $\Hd \Psi(\mathcal{P}) = \Hd Z$.
\end{itemize} \end{proposition}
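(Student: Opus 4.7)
The plan is to verify the three assertions in order, each one reducing to an essentially direct computation.

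For \textbf{(a)}, I would simply compute $\Psi(\Psi(x,y)) = \Psi(x, x^{2}-y) = (x, x^{2} - (x^{2}-y)) = (x,y)$. Hence $\Psi$ is its own inverse. The Jacobian matrix of $\Psi$ at $(x,y)$ is $\bigl(\begin{smallmatrix} 1 & 0 \\ 2x & -1\end{smallmatrix}\bigr)$, whose determinant is $-1$, so $\Psi$ is a smooth diffeomorphism of $\R^{2}$ onto itself, and it is bi-Lipschitz on any bounded subset (since the Jacobian norm and its inverse are bounded there).

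For \textbf{(b)}, I would parametrize $\ell_{(x_{0},y_{0})}$ as $\{(x, y_{0} + 2x_{0}(x - x_{0})) : x \in \R\}$, apply $\Psi$, and then complete the square:
\begin{displaymath}
\Psi(x, y_{0} + 2x_{0}(x - x_{0})) = \bigl(x,\; x^{2} - y_{0} - 2x_{0}(x - x_{0})\bigr) = \bigl(x,\; (x - x_{0})^{2} + (x_{0}^{2} - y_{0})\bigr).
\end{displaymath}
Setting $u = x - x_{0}$ shows this traces out $\{(x_{0} + u, (x_{0}^{2} - y_{0}) + u^{2}) : u \in \R\} = \Psi(x_{0}, y_{0}) + \bar{\mathbb{P}}$, as desired. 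The converse statement then follows immediately from (a) by applying $\Psi$ to both sides.

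For \textbf{(c)}, the key step is that, by combining (a) and (b), one has $\Psi(z + \bar{\mathbb{P}}) = \ell_{\Psi(z)}$ for every $z \in \R^{2}$. Writing $z = (z_{1}, z_{2})$, the line $\ell_{\Psi(z)}$ is the non-vertical line $y = 2z_{1} x + (- z_{1}^{2} - z_{2})$, so in the slope-intercept parametrization \eqref{form20} it corresponds to $(a,b) = (2z_{1}, -z_{1}^{2} - z_{2})$. The map $z \mapsto (2z_{1}, -z_{1}^{2} - z_{2})$ is a smooth global diffeomorphism of $\R^{2}$ with Jacobian determinant $-2$, hence locally bi-Lipschitz. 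Composing with the locally bi-Lipschitz map $(a,b) \mapsto \ell(a,b)$ from $\R^{2}$ to $(\mathcal{A}(2,1), d_{\mathcal{A}(2,1)})$ noted after \eqref{form20} yields the claim. The dimension statement then follows from countable stability of Hausdorff dimension by covering $Z$ with bounded sets on which the composition is bi-Lipschitz.

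Honestly, no step here is a serious obstacle; the only mildly subtle point is bookkeeping between the two parametrizations of lines (the ``base point + direction'' form $\ell_{(x_{0},y_{0})}$ and the ``slope-intercept'' form $\ell(a,b)$), which must be reconciled in (c) in order to invoke the bi-Lipschitz equivalence with $d_{\mathcal{A}(2,1)}$.
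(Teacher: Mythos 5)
Your proposal is correct and follows essentially the same route as the paper: the same direct computations for (a) and (b), and for (c) the same identification $\Psi(z + \bar{\mathbb{P}}) = \ell_{\Psi(z)} = \ell(2z_{1}, -z_{1}^{2} - z_{2})$ followed by composing the locally bi-Lipschitz maps $z \mapsto (2z_{1},-z_{1}^{2}-z_{2})$ and $(a,b) \mapsto \ell(a,b)$. The extra details you supply (the Jacobian computations and the countable stability argument for the dimension statement) are correct and only make explicit what the paper leaves implicit.
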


\begin{proof} To prove (a), note that $\Psi(x,x^{2} - y) = (x,x^{2} - (x^{2} - y)) = (x,y)$. To prove (b):
\begin{displaymath} \Psi(x,y_{0} + 2x_{0}(x - x_{0})) = (x,x^{2} - y_{0} - 2xx_{0} + 2x_{0}^{2}) = \Psi(x_{0},y_{0}) + (x - x_{0},(x - x_{0})^{2}) \end{displaymath} 
for $(x_{0},y_{0}) \in \R^{2}$ and $x \in \R$.

To prove (c), note that $\Psi(z + \bar{\mathbb{P}}) = \Psi^{-1}(z + \bar{\mathbb{P}}) = \ell_{\Psi(z)}$ by (a)-(b). On the other hand, writing $z = (x_{0},y_{0})$, we have 
\begin{displaymath} \ell_{\Psi(z)} = \ell_{(x_{0},x_{0}^{2} - y_{0})} = \{(x,y) : y = (x_{0}^{2} - y_{0}) + 2xx_{0} - 2x_{0}^{2}\} = \ell(2x_{0},-x_{0}^{2} - y_{0})\end{displaymath}
in the notation \eqref{form20}. Therefore, $z \mapsto \Psi(z + \bar{\mathbb{P}})$ is the composition of the locally bi-Lipschitz maps $(x_{0},y_{0}) \mapsto (2x_{0},-x_{0}^{2} - y_{0})$ and $(a,b) \mapsto \ell(a,b)$. \end{proof}

We are then equipped to prove Theorem \ref{main2}.

\begin{proof}[Proof of Theorem \ref{main2}] Recall that $K \subset \mathbb{P}$, and $\Hd K \geq s$. We claim the following:
\begin{equation}\label{form11} \Hd nK \geq \min\left\{\tfrac{3s + \Hd [(n - 1)K]}{2},s + 1\right\}, \qquad n \geq 2, \end{equation} 
where $mK = K + \ldots + K$ refers to the $m$-fold sum-set. To prove \eqref{form11}, note that $nK = Z + K$, where $Z := (n - 1)K$. The set $nK$, therefore, has the property 
\begin{displaymath} \Hd [(nK) \cap (z + \mathbb{P})] \geq \Hd (z + K) \geq s, \qquad z \in Z. \end{displaymath}
Since $\Psi$ is a dimension-preserving bijection, it follows that
\begin{displaymath} \Hd [\Psi(nK) \cap \Psi(z + \mathbb{P})] \geq s, \qquad z \in Z. \end{displaymath}
By Proposition \ref{prop3}(b)-(c), the inequality above shows that $F := \Psi(nK)$ is an $(s,\Hd Z)$-Furstenberg set: there exists a $(\Hd Z)$-dimensional family of lines, namely 
\begin{displaymath} \mathcal{L} = \{\Psi(z + \mathbb{P}) : z \in Z\} \subset \mathcal{A}(2,1), \end{displaymath}
with the property that $\Hd (F \cap \ell) \geq s$ for all $\ell \in \mathcal{L}$. Writing $t := \Hd Z \geq s$, we may therefore deduce from \eqref{form10} that
\begin{displaymath} \Hd (nK) = \Hd \Psi(nK) \geq \min\left\{s + t,\tfrac{3s + t}{2},s + 1\right\}. \end{displaymath}
This implies \eqref{form11}, because $s + t \leq (3s + t)/2$ only when $s \geq t$.

Finally, \eqref{form11} implies Theorem \ref{main2}, because if $\{t_{n}\}_{n \in \N}$ is a sequence of real numbers satisfying $t_{n} \geq \min\{(3s + t_{n - 1})/2,s + 1\}$ for $n \geq 2$, with initial condition $t_{1} \geq s$, then a straightforward induction shows that
\begin{equation}\label{form12} t_{n} \geq \min\{3s - s \cdot 2^{-(n - 2)},s + 1\}, \qquad n \geq 1. \end{equation}
This completes the proof of Theorem \ref{main2}. \end{proof}


\section{Preliminaries for the proof of Theorem \ref{main}}\label{s:prelim}

We start by setting some notation and terminology.

\begin{definition}[$(\delta,s,C)$-set]\label{def:deltaSSet} Let $s \in [0,d]$, $C > 0$. A set $P \subset \R^{d}$ is called a \emph{$(\delta,s,C)$-set} if
\begin{equation}\label{form15} |P \cap B(x,r)|_{\delta} \leq Cr^{s}|P|_{\delta}, \qquad x \in \R^{d}, \, r \geq \delta. \end{equation} 
Here $|\cdot|_{\delta}$ refers to the $\delta$-covering number. A line set $\mathcal{L} \subset \mathcal{A}(2,1)$ is called a $(\delta,s,C)$-set if it satisfies the counterpart of \eqref{form15} relative to the metric $d_{\mathcal{A}(2,1)}$ defined in \eqref{form14}.
 \end{definition} 
 
From now on,  $\mathcal{D}_{\delta}(\R^{d})$ refers to all dyadic cubes in $\R^{d}$ of side-length $\delta \in 2^{-\N}$, and 
\begin{displaymath} \mathcal{D}_{\delta} := \{p \in \mathcal{D}_{\delta}(\R^{d}) : p \subset [0,1)^{d}\}. \end{displaymath}
If $K \subset \R^{d}$ is a set, we also write $\mathcal{D}_{\delta}(K) := \{p \in \mathcal{D}_{\delta}(\R^{d}) : K \cap p \neq \emptyset\}$. A family $\mathcal{P} \subset \mathcal{D}_{\delta}(\R^{d})$ is called a $(\delta,s,C)$-set if its union $\cup \mathcal{P}$ is a $(\delta,s,C)$-set in the sense of Definition \ref{def:deltaSSet}. In the proof of Theorem \ref{main}, the Furstenberg set theorem of Ren-Wang enters through the next lemma, which is a $\delta$-discretised counterpart of \eqref{form11}:

\begin{lemma}\label{lemma1} For all $s \in (0,1]$, $t \in [0,2]$, and $\kappa > 0$, there exist $\epsilon = \epsilon(s,t,\kappa) > 0$ and $\delta_{0} = \delta_{0}(s,t,\kappa,\epsilon) > 0$ such that the following holds for all $\delta \in (0,\delta_{0}]$. Let $\mathcal{P} \subset \mathcal{D}_{\delta}$ be a non-empty $(\delta,t,\delta^{-\epsilon})$-set. Assume that to every $p \in \mathcal{P}$ there is an associated non-empty $(\delta,s,\delta^{-\epsilon})$-set $\mathcal{F}(p) \subset \mathcal{D}_{\delta}$ with the property 
\begin{equation}\label{form13} q \cap (p + \mathbb{P}) \neq \emptyset, \qquad q \in \mathcal{F}(p), \, p \in \mathcal{P}. \end{equation}
Here $p + \mathbb{P}$ is the translate of $\mathbb{P}$ by the centre of $p$. Then, the union $\mathcal{F} := \bigcup \{\mathcal{F}(p) : p \in \mathcal{P}\}$ satisfies
\begin{displaymath} |\mathcal{F}| \geq \delta^{-\gamma(s,t) + \kappa}, \qquad \gamma(s,t) := \min\left\{s + t,\tfrac{3s + t}{2},s + 1\right\}. \end{displaymath}
\end{lemma}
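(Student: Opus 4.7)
\textbf{Proof plan for Lemma \ref{lemma1}.} The strategy is to transfer the problem from ``translates of $\mathbb{P}$'' to ``lines'' via the map $\Psi$ from Proposition \ref{prop3}, and then invoke a $\delta$-discretised version of the Ren--Wang theorem (which the paper will state as Theorem \ref{thm:renWang}). All objects live inside $[0,1)^{2}$, and on this compact region $\Psi$ is bi-Lipschitz with absolute constants; I fix such a constant $L \geq 1$ throughout.

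\emph{Step 1: transfer to lines.} For each $p \in \mathcal{P}$, let $z_{p}$ denote its centre, and set $\ell_{p} := \Psi(z_{p} + \bar{\mathbb{P}})$. By Proposition \ref{prop3}(b), $\ell_{p} \in \mathcal{A}(2,1)$ is a line, and by Proposition \ref{prop3}(c) the assignment $z_{p} \mapsto \ell_{p}$ is bi-Lipschitz. Hence the line family $\mathcal{L} := \{\ell_{p} : p \in \mathcal{P}\}$ is a $(\delta,t,C\delta^{-\epsilon})$-set in $(\mathcal{A}(2,1), d_{\mathcal{A}(2,1)})$ for some absolute $C$. Similarly, for each $p \in \mathcal{P}$ the set $\tilde{\mathcal{F}}(p) := \mathcal{D}_{\delta}(\Psi(\bigcup \mathcal{F}(p)))$ inherits the $(\delta,s,C'\delta^{-\epsilon})$-set property from $\mathcal{F}(p)$, because $\Psi$ is bi-Lipschitz on $[0,1)^{2}$ and $\delta$-cubes get mapped into unions of a bounded number of $\delta$-cubes. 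Finally, the hypothesis \eqref{form13} together with Proposition \ref{prop3}(b) implies that every $\tilde{q} \in \tilde{\mathcal{F}}(p)$ meets the $O(\delta)$-neighbourhood of $\ell_{p}$.

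\emph{Step 2: apply the quantitative Furstenberg estimate.} The set $\tilde{\mathcal{F}} := \bigcup_{p \in \mathcal{P}} \tilde{\mathcal{F}}(p)$ and the line family $\mathcal{L}$ together form exactly the configuration to which a $\delta$-discretised Furstenberg bound applies: $\mathcal{L}$ is a $(\delta,t,C\delta^{-\epsilon})$-set of lines, and to each $\ell_{p} \in \mathcal{L}$ is attached a $(\delta,s,C'\delta^{-\epsilon})$-subset $\tilde{\mathcal{F}}(p)$ of its $O(\delta)$-neighbourhood. Applying Theorem \ref{thm:renWang} with parameters $(s,t,\kappa/2)$ yields constants $\epsilon_{0}=\epsilon_{0}(s,t,\kappa), \delta_{0}=\delta_{0}(s,t,\kappa)$ such that, provided we choose $\epsilon \leq \epsilon_{0}$ (absorbing the harmless constants $C, C'$ into the $\delta^{-\epsilon}$ factors by further shrinking $\delta_0$),
\begin{equation*}
|\tilde{\mathcal{F}}| \geq \delta^{-\gamma(s,t) + \kappa/2}.
\end{equation*}

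\emph{Step 3: transfer back.} Since $\Psi$ is bi-Lipschitz with an absolute constant on $[0,1)^{2}$, each cube $\tilde{q} \in \tilde{\mathcal{F}}$ is contained in $\Psi$ of a bounded number of cubes from $\mathcal{F}$, so $|\mathcal{F}| \gtrsim |\tilde{\mathcal{F}}| \geq \delta^{-\gamma(s,t) + \kappa/2} \geq \delta^{-\gamma(s,t) + \kappa}$ once $\delta$ is small enough (in terms of $\kappa$). This gives the claimed bound.

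\emph{Expected main obstacle.} Steps 1 and 3 are essentially bookkeeping because $\Psi$ is smooth and bi-Lipschitz on the relevant compact set; the only care needed is to verify that all absolute constants produced by $\Psi$ can be absorbed into the $\delta^{-\epsilon}$ factors without corrupting the hypotheses of the quantitative Furstenberg estimate. The real content of the lemma is Step 2, i.e.\ the availability of an $\epsilon$-effective version of Theorem \ref{t:renWangQualitative} of the form ``$\delta^{-\epsilon}$-regular $\Rightarrow$ $\delta^{-\gamma(s,t)+\kappa}$ cubes''; absent this quantitative statement, the conclusion fails. Accordingly, the writeup will take Theorem \ref{thm:renWang} as a black box and reduce Lemma \ref{lemma1} to it via the diffeomorphism $\Psi$.
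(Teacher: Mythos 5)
Your proposal is correct and matches the paper's own argument: both pass through the map $\Psi$ (using Proposition \ref{prop3}(b)--(c) to turn translated parabolas into a $(\delta,t,O(\delta^{-\epsilon}))$-set of lines and to push each $\mathcal{F}(p)$ into the $O(\delta)$-tube around the corresponding line), invoke the discretised Ren--Wang bound (Theorem \ref{thm:renWang}) as a black box, and transfer back using that $\Psi = \Psi^{-1}$ is bi-Lipschitz on the relevant compact region so covering numbers change only by absolute constants. The only cosmetic difference is parametrisation: you label the line by the centre $z_p$ directly as $\ell_p = \Psi(z_p + \bar{\mathbb{P}})$, while the paper writes the centre as $\Psi(x_p,y_p)$ and uses $\ell_{(x_p,y_p)}$; these are the same line.
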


Using the map $\Psi$ from Proposition \ref{prop3}, Lemma \ref{lemma1} turns out to be a reformulation of a $\delta$-discretised version of Ren and Wang's result (\cite[Theorem 4.1]{2023arXiv230808819R}) stated below as Theorem \ref{thm:renWang}. First, let us explain what is meant by a $(\delta,t,C)$-set of tubes:
\begin{definition}[$(\delta,s,C)$-set of tubes] Let $\mathcal{T}$ be a family of $\delta$-tubes. Thus, 
\begin{displaymath} \mathcal{T} = \{[\ell]_{\delta/2} : \ell \in \mathcal{L}\}, \end{displaymath}
where $[A]_{r}$ refers to the $r$-neighbourhood of a set $A \subset \R^{d}$, and $\mathcal{L} \subset \mathcal{A}(2,1)$. We say that $\mathcal{T}$ is a $(\delta,s,C)$-set if the line family $\mathcal{L}$ is a $(\delta,s,C)$-set in the sense of Definition \ref{def:deltaSSet}. \end{definition}

\begin{thm}[Ren-Wang]\label{thm:renWang}  For all $s \in (0,1]$, $t \in [0,2]$, and $\kappa > 0$, there exist $\epsilon = \epsilon(s,t,\kappa) > 0$ and $\delta_{0} = \delta_{0}(s,t,\kappa,\epsilon) > 0$ such that the following holds for all $\delta \in (0,\delta_{0}]$. Let $\mathcal{T} \subset \mathcal{T}^{\delta}$ be a non-empty $(\delta,t,\delta^{-\epsilon})$-set. Assume that to every $T \in \mathcal{T}$ there is an associated non-empty $(\delta,s,\delta^{-\epsilon})$-set $\mathcal{P}(T) \subset \mathcal{D}_{\delta}$ with the property 
\begin{displaymath} q \cap T \neq \emptyset, \qquad q \in \mathcal{P}(T), \, T \in \mathcal{T}. \end{displaymath}
Then, the union $\mathcal{P} := \bigcup \{\mathcal{P}(T) : T \in \mathcal{T}\}$ satisfies
\begin{displaymath} |\mathcal{P}| \geq \delta^{-\gamma(s,t) + \kappa}, \qquad \gamma(s,t) := \min\left\{s + t,\tfrac{3s + t}{2},s + 1\right\}. \end{displaymath}
\end{thm}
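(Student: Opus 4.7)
The statement is explicitly attributed to Ren--Wang and, up to notational conventions, is \cite[Theorem 4.1]{2023arXiv230808819R}. My plan is therefore to present it as a direct consequence of their theorem rather than to reprove it. The first step is to transcribe their hypotheses into the language of $(\delta,s,C)$-sets used in this paper: Ren--Wang work with a family of $\delta$-tubes satisfying a Katz--Tao-style non-concentration condition and, for each tube, a set of $\delta$-balls along the tube satisfying its own non-concentration, which corresponds exactly to the data $(\mathcal{T},\mathcal{P}(T))$ above. The constants $\delta^{-\epsilon}$ appearing in the hypotheses are absorbed into the $\delta^{\kappa}$ loss of the conclusion by choosing $\epsilon = \epsilon(s,t,\kappa) > 0$ sufficiently small; this is the standard $\epsilon$-management in $\delta$-discretised Furstenberg statements, and the choice of $\delta_{0}$ is then what makes the inductive scheme of Ren--Wang close.

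For completeness I would include a brief outline of the main ingredients of the Ren--Wang proof. The argument combines three techniques: (i) \emph{induction on scale}, factoring $\delta = \Delta \cdot \rho$ and applying the Furstenberg bound at both the coarse scale $\Delta$ and the refined scale $\rho = \delta/\Delta$; (ii) the \emph{High--Low method} of Guth--Solomon--Wang, which decomposes the tube-counting function into low- and high-frequency Fourier parts and estimates the low-frequency part via planar incidence bounds and the high-frequency part via $L^{2}$-orthogonality; and (iii) a refined \emph{Kakeya-type} $L^{6}$-estimate for sums of tube indicators, in the spirit of Guth--Wang--Zahl. These combine into a bootstrap whose fixed point is precisely the exponent $\gamma(s,t)$.

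The three terms in $\gamma(s,t) = \min\{s+t,\tfrac{3s+t}{2},s+1\}$ correspond to three extremal regimes: the trivial product bound $s+t$, obtained by counting points tube-by-tube; the $(3s+t)/2$ bound obtained via High--Low combined with Szemer\'edi--Trotter-type incidence estimates; and the Kakeya bound $s+1$ that dominates when the line family is very dense. The main technical obstacle, and the place where the inductive parameter $\epsilon$ is consumed, is maintaining the $(\delta,s,\delta^{-\epsilon})$- and $(\delta,t,\delta^{-\epsilon})$-non-concentration hypotheses through the pigeonholing steps required at each scale: after passing to "popular" subfamilies of tubes and point-sets, one must verify that the resulting refinements remain Frostman at a slightly worse constant, which is done by a standard two-ends / dyadic pigeonholing argument that loses only $\delta^{-O(\epsilon)}$ factors and is absorbed into the final $\delta^{\kappa}$ error.
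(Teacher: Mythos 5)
Your overall approach coincides with the paper's: Theorem \ref{thm:renWang} is not proved here but imported from Ren--Wang \cite{2023arXiv230808819R}, with the $\delta^{-\epsilon}$ hypotheses absorbed into the $\delta^{\kappa}$ loss exactly as you describe, and your sketch of their machinery (induction on scales, High--Low, the Kakeya-type input) is accurate background but not something the paper reproduces either. The one point where your write-up is not quite right is the claim that the statement ``up to notational conventions, is \cite[Theorem 4.1]{2023arXiv230808819R}'' and that their data ``corresponds exactly to $(\mathcal{T},\mathcal{P}(T))$'': as the paper notes, Theorem \ref{thm:renWang} is the \emph{dual} version of that result, in which the roles of points and tubes are interchanged (Ren--Wang's Theorem 4.1 takes a $(\delta,t)$-set of $\delta$-squares and, through each square, a $(\delta,s)$-set of tubes, and bounds the number of tubes from below). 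Passing from their formulation to the one stated here requires the standard point--line duality argument, including the verification that the $(\delta,s)$- and $(\delta,t)$-non-concentration conditions are preserved under the duality map; this is carried out in the proof of \cite[Theorem 3.2]{2023arXiv230808819R}, and your transcription step should invoke it explicitly rather than treating the two statements as identical. With that correction, the deduction is the same citation-based argument the paper uses.
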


To be accurate, Theorem \ref{thm:renWang} is a "dual version" of \cite[Theorem 4.1]{2023arXiv230808819R}. For an explanation about how to pass between the two versions of the statement, see the proof of \cite[Theorem 3.2]{2023arXiv230808819R}.  We will now deduce Lemma \ref{lemma1} from Theorem \ref{thm:renWang}:
\begin{proof}[Proof of Lemma \ref{lemma1}] 

The map $\Psi$ from Proposition \ref{prop3} allows us to pass easily between Theorem \ref{thm:renWang} and Lemma \ref{lemma1}. Indeed, let $\mathcal{P}$ and $\{\mathcal{F}(p)\}_{p \in \mathcal{P}}$ be the objects from Lemma \ref{lemma1}. Fix $p \in \mathcal{P}$, and write (the centre) $p = \Psi(x_{p},y_{p})$ for some $(x_{p},y_{p}) \in \R^{2}$. Recall from \eqref{form13} that $\mathcal{F}(p)$ is a non-empty $(\delta,s,\delta^{-\epsilon})$-set of dyadic $\delta$-squares, all of which intersect $p + \mathbb{P}$. In particular $\cup \mathcal{F}(p) \subset [\Psi(x_{p},y_{p}) + \mathbb{P}]_{2\delta}$, where $[A]_{r}$ is the $r$-neighbourhood of $A \subset \R^{2}$. Therefore, by Proposition \ref{prop3}(b), and the local bi-Lipschitz property of $\Psi^{-1} = \Psi$,
\begin{equation}\label{form3} \Psi^{-1}(\cup \mathcal{F}(p)) \subset \Psi^{-1}([\Psi(x_{p},y_{p}) + \mathbb{P}]_{2\delta}) \subset [\ell_{(x_{p},y_{p})}]_{C\delta}, \end{equation}
for some absolute $C > 0$. Here $\ell_{(x_{p},y_{p})} \in \mathcal{A}(2,1)$ is the line introduced in Proposition \ref{prop3}(b). Next, recall that $\mathcal{P}$ is a $(\delta,t,\delta^{-\epsilon})$-set. This implies, using Proposition \ref{prop3}(c), that the line family
\begin{displaymath} \mathcal{L} := \{\Psi((x_{p},y_{p}) + \bar{\mathbb{P}}) : p \in \mathcal{P}\} = \{\ell_{(x_{p},y_{p})} : p \in \mathcal{P}\} \end{displaymath}
is a $(\delta,t,O(\delta^{-\epsilon}))$-set. Therefore,
\begin{displaymath} \mathcal{T} := \{T_{p} : p \in \mathcal{P}\} := \{[\ell_{(x_{p},y_{p})}]_{C\delta} : p \in \mathcal{P}\} \end{displaymath}
is a $(\delta,t,O(\delta^{-\epsilon}))$-set of $(C\delta)$-tubes. According to \eqref{form3}, the tube $T_{p}$ contains the $(\delta,s,O(\delta^{-\epsilon}))$-set $\Psi^{-1}(\cup \mathcal{F}(p))$. After small technical adjustments we rather omit (covering $(C\delta)$-tubes with $\delta$-tubes and replacing $\Psi^{-1}(\cup \mathcal{F}(p))$ with unions of dyadic $\delta$-squares), Theorem \ref{thm:renWang} is now applicable to the objects $\mathcal{T}$ and $\{\Psi^{-1}(\cup \mathcal{F}(p))\}_{p \in \mathcal{P}}$. The conclusion is that
\begin{displaymath} \left| \bigcup_{p \in \mathcal{P}} \mathcal{F}(p) \right| \sim \left| \bigcup_{p \in \mathcal{P}} \Psi^{-1}(\cup \mathcal{F}(p)) \right|_{\delta} \geq \delta^{-\gamma(s,t) + \kappa},\end{displaymath}
using the (local) bi-Lipschitz property of $\Psi^{-1} = \Psi$ in the first equation. This completes the proof of Lemma \ref{lemma1}. \end{proof}

\section{$L^{2}$-flattening}\label{s4}

The main results in this section are Proposition \ref{prop2} and Corollary \ref{cor1}. We call these "$L^{2}$-flattening" results: they roughly state that the convolution of fractal measures $\mu$ and $\sigma$ (with $\sigma$ supported on the parabola $\mathbb{P}$) has a smoothing effect on Riesz-energies, which is an $L^{2}$-based quantity. The terminology "$L^{2}$-flattening" may be slightly misleading, since the result we aim for is not of the form $I_{t}(\mu \ast \sigma) < I_{t}(\mu)$, that is, a reduction of the Riesz energy with a fixed exponent. Rather, Proposition \ref{prop2} goes in the direction of showing that the Riesz energy of $\mu \ast \sigma$ is finite with some exponent $t' > t$, provided that $I_{t}(\mu) < \infty$ and $I_{s}(\sigma) < \infty$ for (sufficiently large) $s > 0$. More accurately, Proposition \ref{prop2} falls short of proving what is stated above, but rather accomplishes this for a sufficiently high self-convolution power $(\mu \ast \sigma)^{k}$. 

The "$L^{2}$-flattening" terminology is historically motivated. The proof of Proposition \ref{prop2} is modelled after the proof of \cite[Lemma 3.1]{2023arXiv230903068O}, which further borrowed many ideas from Bourgain's finite field paper \cite{MR2481734}. All these arguments apply a strategy introduced by Bourgain and Gamburd, see \cite[Proposition 2.2]{BourgainGamburd08} and \cite[Proposition 2]{MR2415383}, which is nicknamed the "$\ell^{2}$-flattening lemma" by the authors in \cite{BourgainGamburd08}.

From now on, if $\mu$ is a finite measure on $\R^{2}$, the notation $\mu^{k}$ refers to the $k$-fold self-convolution $\mu \ast \cdots \ast \mu$. If moreover $\delta \in (0,1]$, and $s \in (0,d)$, the notation $I_{s}^{\delta}(\mu)$ refers to the $s$-dimensional Riesz-energy of the mollified measure $\mu_{\delta} = \mu \ast \psi_{\delta}$, thus
\begin{displaymath} I_{s}^{\delta}(\mu) = I_{s}(\mu_{\delta}) = \iint \frac{\mu_{\delta}(x) \mu_{\delta}(y)}{|x - y|^{s}} \, dx \, dy. \end{displaymath}
Here $\{\psi_{\delta}\}_{\delta > 0}$ is any standard approximate identity of the form $\psi_{\delta}(x) = \delta^{-2}\psi(x/\delta)$, where $\int \psi = 1$, and $\psi$ is radially decreasing, and $\mathbf{1}_{B(1/2)} \leq \psi \leq \mathbf{1}_{B(1)}$.

We start by recording the following elementary lemma.

\begin{lemma}\label{lemma2} Let $\epsilon \in (0,1)$, $C,d \geq 1$, and $s \in (0,d)$. Then the following holds for all $\delta \in 2^{-\N}$ sufficiently small.

Let $\mu$ be a Borel probability measure on $\R^{d}$ such that $I_{s}^{\delta}(\mu) \leq C$. Let $K \subset B(1)$ be a Borel set such that $\mu(K) \geq \delta^{\epsilon}$. Then, there exists a non-empty $(\delta,s,O_{d}(C\delta^{-3\epsilon}))$-set $\mathcal{P} \subset \mathcal{D}_{\delta}(K)$. \end{lemma}

\begin{proof} In this proof, the $\lesssim$ notation may hide constants depending on the dimension $d$. First note that $\mu_{\delta}(K_{\delta}) \gtrsim \delta^{\epsilon}$, where $K_{\delta} := \cup \mathcal{D}_{\delta}(K)$ is the dyadic $\delta$-neighbourhood of $K$. This is because
\begin{displaymath} \mu_{\delta}(K_{\delta}) = \int \mathbf{1}_{K_{\delta}}(z) (\mu \ast \psi_{\delta})(z) \, dz = \int \mathbf{1}_{K_{\delta}} \ast \psi_{\delta} \, d\mu, \end{displaymath} 
and $\mathbf{1}_{K_{\delta}} \ast \psi_{\delta} \gtrsim \mathbf{1}_{K}$. Next, Chebychev's inequality implies that
\begin{displaymath} B_{A} := \{z \in K_{\delta} : \int |z - w|^{-s} \, d\mu_{\delta}(w) \geq AC\delta^{-\epsilon}\}, \qquad A \geq 1, \end{displaymath} 
satisfies $\mu_{\delta}(B_{A}) \leq \delta^{\epsilon}/(AC) \cdot I_{s}^{\delta}(\mu) \leq \delta^{\epsilon}/A$. Therefore, provided that $A \geq 1$ is a sufficiently large absolute constant, $\mu_{\delta}(K_{\delta} \, \setminus \, B_{A}) \geq \tfrac{1}{2}\mu_{\delta}(K_{\delta}) \gtrsim \delta^{\epsilon}$. Now the restriction 
\begin{displaymath} \mu_{\delta}' := \mu_{\delta}|_{(K_{\delta} \, \setminus \, B_{A})} \end{displaymath}
satisfies $\mu_{\delta}'(K_{\delta}) \gtrsim \delta^{\epsilon}$ and 
\begin{equation}\label{form24} \mu_{\delta}'(B(z,r)) \lesssim C\delta^{-\epsilon}r^{s}, \qquad z \in \R^{2}, \, r > 0. \end{equation}
To proceed finding a $(\delta,s,O_{d}(C\delta^{-3\epsilon}))$-set $\mathcal{P} \subset \mathcal{D}_{\delta}(K)$, we partition $\mathcal{D}_{\delta}(K)$ according to the value of $\mu_{\delta}'(p) \in [0,1]$:
\begin{displaymath} \mathcal{D}_{\delta}(K) = \bigcup_{j \in \N} \mathcal{D}_{\delta}^{j}(K), \qquad \mathcal{D}_{\delta}^{j}(K) := \{p \in \mathcal{D}_{\delta}(K) : 2^{-j - 1} \leq \mu_{\delta}'(p) \leq 2^{-j}\}. \end{displaymath}
We may disregard values of $j \in \N$ with $2^{j} \geq \delta^{-d - 1}$, because the total $\mu_{\delta}'$ measure covered by cubes with these indices is so small (recall here that $K \subset B(1)$):
\begin{displaymath} \sum_{2^{j} \geq \delta^{-d - 1}} \mu_{\delta}'(\cup \mathcal{D}^{j}_{\delta}(K)) \lesssim \sum_{2^{j} \geq \delta^{-d - 1}} 2^{-j}\delta^{-d} \lesssim \delta. \end{displaymath}
Since $\mu_{\delta}'(K_{\delta}) \gtrsim \delta^{\epsilon}$, and $\epsilon \in (0,1)$, the total measure on the left hand side above is at most $\tfrac{1}{2}\mu_{\delta}'(K_{\delta})$ for $\delta > 0$ sufficiently small.

Finally, note that there are $\leq (d + 1)\log(1/\delta)$ indices $j \in \N$ with $1 \leq 2^{j} \leq \delta^{-d - 1}$. So, there exists an index $j \in \N$ with $1 \leq 2^{j} \leq \delta^{-d - 1}$ such that, writing $\mathcal{P} := \mathcal{D}_{\delta}^{j}(K)$, we have
\begin{displaymath} \mu_{\delta}'(\cup \mathcal{P}) \gtrsim (\log(1/\delta))^{-1}\mu_{\delta}'(K_{\delta}) \gtrsim \delta^{2\epsilon}. \end{displaymath}
We now claim that $\mathcal{P}$ is a $(\delta,s,O_{d}(C\delta^{-3\epsilon}))$-set. Let $\delta \leq \Delta \leq 1$, and $Q \in \mathcal{D}_{\Delta}(\R^{d})$. Then, using the definition of $\mathcal{D}_{\delta}^{j}(K)$,
\begin{displaymath} 2^{-j} \cdot |\mathcal{P} \cap Q| \lesssim \mu_{\delta}'(Q) \stackrel{\eqref{form24}}{\lesssim} C\delta^{-\epsilon}\Delta^{s}, \end{displaymath}
therefore $|\mathcal{P} \cap Q| \lesssim C\delta^{-\epsilon}\Delta^{s} \cdot 2^{j}$. On the other hand,
\begin{displaymath} \delta^{2\epsilon} \lesssim \mu_{\delta}'(\cup \mathcal{P}) \lesssim 2^{-j}|\mathcal{P}|, \end{displaymath}
so $2^{j} \lesssim \delta^{-2\epsilon}|\mathcal{P}|$. We have shown that $|\mathcal{P} \cap Q| \lesssim C\delta^{-3\epsilon}\Delta^{s}|\mathcal{P}|$ for all $Q \in \mathcal{D}_{\Delta}(\R^{d})$. In other words $\mathcal{P}$ is a $(\delta,s,O(C\delta^{-3\epsilon}))$-set, as claimed.\end{proof}

\begin{proposition}\label{prop1} For all $s \in (0,1]$, $t \in [0,2]$, and $\kappa > 0$, there exist $\epsilon = \epsilon(\kappa,s,t) > 0$ and $\delta_{0} = \delta_{0}(\kappa,s,t,\epsilon) > 0$ such that the following holds for all $\delta \in (0,\delta_{0}]$ and $k \geq 1$. Assume that $\mu,\sigma$ are Borel probability measures with 
\begin{displaymath} \spt \mu \subset B(1) \subset \R^{2} \quad \text{and} \quad \spt \sigma \subset \mathbb{P}. \end{displaymath}
Assume additionally that $I^{\delta}_{t}(\mu) \leq \delta^{-\epsilon}$ and $I^{\delta}_{s}(\sigma) \leq \delta^{-\epsilon}$. Write $\Pi := \mu \ast \sigma$, and assume that $E \subset \R^{2}$ is a Borel set with $\Pi^{k}(E) \geq \delta^{\epsilon}$. Then,
\begin{equation}\label{form1} |E|_{\delta} \geq \delta^{-\gamma(s,t) + \kappa}, \qquad \gamma(s,t) := \min\left\{s + t,\tfrac{3s + t}{2},s + 1\right\}. \end{equation} \end{proposition}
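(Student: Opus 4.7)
The plan is to reduce Proposition \ref{prop1} to a direct application of Lemma \ref{lemma1}, exploiting the structure $\Pi = \mu \ast \sigma$, which arranges parabola translates of $\sigma$ along the support of $\mu$.

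First I would reduce to the case $k = 1$. Write $\Pi^{k} = \Pi \ast \Pi^{k-1}$ and apply Fubini: $\delta^{\epsilon} \leq \Pi^{k}(E) = \int \Pi(E - z)\, d\Pi^{k-1}(z)$. Since $\Pi$ is a probability measure, some $z$ satisfies $\Pi(E - z) \geq \delta^{\epsilon}$, and replacing $E$ by $E - z$ does not affect the $\delta$-covering number. So we may assume $(\mu \ast \sigma)(E) \geq \delta^{\epsilon}$. Expanding, $\delta^{\epsilon} \leq \int \sigma(E - z)\, d\mu(z)$, and pigeonholing against the trivial bound $\sigma(E - z) \leq 1$ produces a set $Z \subset B(1)$ of "good" translates with $\mu(Z) \gtrsim \delta^{\epsilon}$ and $\sigma(E - z) \gtrsim \delta^{\epsilon}$ for every $z \in Z$.

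Next I would upgrade the energy bounds $I^{\delta}_{t}(\mu), I^{\delta}_{s}(\sigma) \leq \delta^{-\epsilon}$ to clean $(\delta,t)$- and $(\delta,s)$-set structure on $Z$ and on the relevant slices of $E$. This is a standard uniformisation procedure: applying Chebyshev to the discrete Riesz potentials and dyadic pigeonholing extracts large subsets on which all ball-densities at scales $\geq \delta$ are uniformly bounded. The outcome is a $(\delta, t, \delta^{-C\epsilon})$-set $\mathcal{P} \subset \mathcal{D}_{\delta}$ covering a positive portion of $Z$, and, for each $p \in \mathcal{P}$ with centre $z_{p}$, a $(\delta, s, \delta^{-C\epsilon})$-set $\mathcal{F}(p) \subset \mathcal{D}_{\delta}([E]_{O(\delta)})$ whose cubes all meet the translated parabola $z_{p} + \mathbb{P}$. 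The parabola-intersection property comes from $\spt \sigma \subset \mathbb{P}$; replacing a representative $z \in Z \cap p$ by $z_{p}$ only costs an $O(\delta)$-neighbourhood enlargement of $E$, which is absorbed by the mollification.

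Finally, applying Lemma \ref{lemma1} to the pair $(\mathcal{P}, \{\mathcal{F}(p)\}_{p \in \mathcal{P}})$, with the loss parameter $\kappa/2$ in place of $\kappa$ and with $\epsilon$ so small that $C\epsilon$ falls inside the admissible range, yields
$$\Bigl|\bigcup_{p \in \mathcal{P}} \mathcal{F}(p)\Bigr| \geq \delta^{-\gamma(s,t) + \kappa/2}.$$
Since $\bigcup_{p \in \mathcal{P}} \mathcal{F}(p) \subset \mathcal{D}_{\delta}([E]_{O(\delta)})$ and $|[E]_{O(\delta)}|_{\delta} \lesssim |E|_{\delta}$, this proves \eqref{form1}. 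The main technical obstacle is not the structural reduction (which is essentially the definition of $\Pi$ plus Fubini), but the uniformisation step that converts averaged Riesz energy information into the uniform dyadic ball-density bounds required by Lemma \ref{lemma1}; this is standard but requires care to perform simultaneously for $\mu$ and $\sigma$ and to propagate through the pigeonhole on $Z$.
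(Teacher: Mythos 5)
Your proposal is correct and follows essentially the same route as the paper's own proof: reduce to $k=1$ by Fubini and translation invariance of the covering number, pigeonhole to obtain a set of good translates $z$ with $\sigma(E-z)\gtrsim\delta^{\epsilon}$, upgrade the energy bounds $I^{\delta}_{t}(\mu),I^{\delta}_{s}(\sigma)\leq\delta^{-\epsilon}$ to $(\delta,t,\delta^{-O(\epsilon)})$- and $(\delta,s,\delta^{-O(\epsilon)})$-set structure with distinguished centres $z_{p}$, and apply Lemma \ref{lemma1} after an $O(\delta)$-neighbourhood adjustment. The only cosmetic difference is your use of $\kappa/2$ to absorb constants, which is harmless.
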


\begin{proof} First of all, we may assume $k = 1$, because for $k \geq 2$, we may write
\begin{displaymath} \Pi^{k}(E) = \int \Pi(E - z_{2} - \ldots - z_{k}) \, d\Pi(x_{1})\cdots d\Pi(x_{k}). \end{displaymath}
Thus, if $\Pi^{k}(E) \geq \delta^{\epsilon}$, then also $\Pi(E - z_{2} - \ldots - z_{k}) \geq \delta^{\epsilon}$ for some $z_{2},\ldots,z_{k} \in \R^{2}$, and $|E|_{\delta} = |E - z_{2} - \ldots - z_{k}|_{\delta}$.

Let us then consider the case $k = 1$. By hypothesis,
\begin{displaymath} \delta^{\epsilon} \leq (\mu \ast \sigma)(E) = \int \sigma(E - z) \, d\mu (z). \end{displaymath}
Taking into account that both $\mu,\sigma$ are probability measures, there exists a set $K \subset \spt \mu$ with $\mu(K) \geq \delta^{\epsilon}$ such that $\sigma(E - z) \geq \delta^{\epsilon}$ for all $z \in K$. Now, recalling that $I^{\delta}_{t}(\mu) \leq \delta^{-\epsilon}$, Lemma \ref{lemma2} implies the the existence of a non-empty $(\delta,t,\delta^{-O(\epsilon)})$-set $\mathcal{P} \subset \mathcal{D}_{\delta}(K)$.

For every $p \in \mathcal{P}$, fix a distinguished point $z_{p} \in K \cap p$. Then, using $\sigma(E - z_{p}) \geq \delta^{\epsilon}$ and $I^{\delta}_{s}(\sigma) \leq \delta^{-\epsilon}$, one may use Lemma \ref{lemma2} a second time to find a non-empty $(\delta,s,\delta^{-O(\epsilon)})$-set 
\begin{displaymath} \mathcal{F}' \subset \mathcal{D}_{\delta}(E - z_{p}) \cap \mathcal{D}_{\delta}(\spt \sigma). \end{displaymath}
Since $\spt \sigma \subset \mathbb{P}$, this implies that every square in $z_{p} + \mathcal{F}'$ intersects both $E$ and $z_{p} + \mathbb{P}$. The family $z_{p} + \mathcal{F}'$ does not exactly consist of dyadic squares: to make Lemma \ref{lemma1} literally applicable, we replace $z_{p} + \mathcal{F}'$ by a family $\mathcal{F}(p) \subset \mathcal{D}_{\delta}$ with the properties
\begin{itemize}
\item $\cup \mathcal{F}(p) \subset \cup \mathcal{D}_{8\delta}(z_{p} + \mathcal{F}') \subset \cup \mathcal{D}_{8\delta}(E)$ and $|\mathcal{F}(p)| \sim |z_{p} + \mathcal{F}'|_{\delta}$,
\item $q \cap (p + \mathbb{P}) \neq \emptyset$ for all $q \in \mathcal{F}(p)$, $p \in \mathcal{P}$.
\end{itemize}
Then each $\mathcal{F}(p)$ is a non-empty $(\delta,s,\delta^{-O(\epsilon)})$-set, and satisfies the requirements of Lemma \ref{lemma1}. If $\epsilon > 0$ is small enough (depending on $\kappa,s,t$ only), Lemma \ref{lemma1} now implies that
\begin{displaymath} |E|_{8\delta} \geq \left| \bigcup_{p \in \mathcal{P}} \mathcal{F}(p) \right| \geq \delta^{-\gamma(s,t) + \kappa}, \end{displaymath}
provided also that $\delta > 0$ is sufficiently small in terms of $\epsilon,\kappa,s,t$. This implies \eqref{form1}.  \end{proof}

For the proof of the next results, we recall from \cite[Lemma 12.12]{zbMATH01249699} the following Fourier-analytic expression for the $u$-dimensional Riesz-energy:
\begin{equation}\label{form4} I_{u}(\nu) = c_{d,u} \int |\hat{\nu}(\xi)|^{2}|\xi|^{u - d} \, d\xi, \qquad 0 < u < d, \end{equation}
whenever $\nu$ is a finite Borel measure on $\R^{d}$, and where $c_{d,u} > 0$. This expression easily implies the following useful inequality: if $\nu$ is a finite Borel measure on $\R^{d}$, then
\begin{equation}\label{form23} I^{r}_{s}(\nu) \lesssim_{d} I^{\delta}_{s}(\nu), \qquad \delta \leq r \leq 1. \end{equation}
To see this, note, as a first step, that $I^{r}_{s}(\sigma) \leq I_{s}(\sigma)$. This follows from \eqref{form4} applied to $\nu = \sigma \ast \psi_{r}$. As a second step, note that $\psi_{r} \lesssim_{d} \psi_{r} \ast \psi_{\delta}$ for $\delta \leq r \leq 1$, using that $\psi_{r}$ is radially decreasing, and $\spt \psi_{\delta} \subset B(\delta)$. Therefore also $\nu_{r} \lesssim_{d} \nu_{r} \ast \psi_{\delta}$, and finally $I^{r}_{s}(\nu) \lesssim_{d} I^{r}_{s}(\nu_{\delta}) \leq I_{s}(\nu_{\delta}) = I^{\delta}_{s}(\nu)$ by the first step of the proof.

\begin{proposition}\label{prop2} For all $s \in (0,1]$, $t \in [0,2]$, $\kappa \in (0,1]$, and $R > 0$, there exist $\epsilon = \epsilon(\kappa,s,t) > 0$, $k_{0} = k_0(\kappa,s,t) \in \N$, and $\delta_{0} = \delta_{0}(\kappa,s,t,R) > 0$ such that the following holds for all $\delta \in (0,\delta_{0}]$. Assume that $\mu,\sigma$ are Borel probability measures with 
\begin{displaymath} \spt \mu \subset B(R) \quad \text{and} \quad \spt \sigma \subset \mathbb{P}. \end{displaymath}
Assume additionally that $I^{\delta}_{t}(\mu) \leq \delta^{-\epsilon}$ and $I^{\delta}_{s}(\sigma) \leq \delta^{-\epsilon}$. Write $\Pi := \mu \ast \sigma$. Then,
\begin{displaymath} I_{\gamma(s,t)}^{\delta}(\Pi^{k}) \leq \delta^{-\kappa}, \qquad k \geq k_{0}, \end{displaymath} 
where $\gamma(s,t)$ is the constant defined in \eqref{form1}.
\end{proposition}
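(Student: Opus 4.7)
The plan is to deduce Proposition \ref{prop2} from Proposition \ref{prop1} via a Bourgain-Gamburd style $L^{2}$-flattening argument, following the scheme used in \cite{2023arXiv230903068O}. Using the Fourier representation \eqref{form4} and a dyadic decomposition in the frequency parameter, I would first reduce the desired bound $I_{\gamma}^{\delta}(\Pi^{k}) \leq \delta^{-\kappa}$ to the scale-by-scale flattening bound
\[ \sum_{Q \in \mathcal{D}_{r}(\R^{2})} \Pi^{k}(Q)^{2} \leq r^{\gamma - \kappa/2}, \qquad r \in [\delta, 1] \text{ dyadic}, \]
valid once $k \geq k_{0}(\kappa,s,t)$. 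Summing the inequality above against $r^{-\gamma}$ over the $O(\log(1/\delta))$ relevant dyadic scales then gives the proposition, up to absorbing log factors.

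To prove the scale-$r$ bound, I would argue by contradiction. A dyadic pigeonhole on the values of $\Pi^{k}(Q)$ produces a quasi-uniform level set $\mathcal{Q}^{\star} \subset \mathcal{D}_{r}$ of common level $\rho$ and cardinality $N := |\mathcal{Q}^{\star}|$, with $N\rho^{2} \geq r^{\gamma - \kappa}$ (after absorbing logarithms into $\kappa$). Writing $E := \cup\mathcal{Q}^{\star}$ and $M := \Pi^{k}(E) \sim N\rho$, in the \emph{high-mass} regime $M \geq \delta^{\epsilon_{P1}}$, where $\epsilon_{P1} = \epsilon(\kappa/2, s, t)$ is the threshold from Proposition \ref{prop1}, a direct application of Proposition \ref{prop1} at scale $r$ (using \eqref{form23} to pass the Frostman hypotheses on $\mu, \sigma$ from scale $\delta$ to scale $r$) yields $N \geq r^{-\gamma + \kappa/2}$. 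The trivial $N\rho \leq 1$ then forces $\rho \leq r^{\gamma - \kappa/2}$, and so $N\rho^{2} \leq M\rho \leq r^{\gamma - \kappa/2}$, which for $r < 1$ contradicts the assumed $N\rho^{2} \geq r^{\gamma - \kappa}$.

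The technical heart is the \emph{low-mass} case $M < \delta^{\epsilon_{P1}}$, where Proposition \ref{prop1} does not apply to $E$ directly. Here I would iterate via the Minkowski-sum structure: the iterated convolution $\Pi^{Lk}$ places mass at least $M^{L}$ on the $O(Lr)$-thickening of the $L$-fold sum $L \cdot E$, whose $r$-covering number is at most $N^{L}$. Choosing $L$ comparable to $\log(1/\delta^{\epsilon_{P1}})/\log(1/M)$ pushes this mass just above the Proposition \ref{prop1} threshold, and applying Proposition \ref{prop1} to $\Pi^{Lk}$ and $L \cdot E$ then forces $N^{L} \geq r^{-\gamma + \kappa_{P1}}$. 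Extracting the resulting lower bound on $N$ and combining with $N\rho \leq 1$ and $N\rho^{2} \geq r^{\gamma - \kappa}$ produces a contradiction after careful parameter tuning, giving $k_{0} \sim L$ as a function of $\kappa,s,t$ only.

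The main obstacle is this low-mass case: the iteration number $L$ must be chosen so that $M^{L}$ crosses the Proposition \ref{prop1} threshold while $N^{L}$ does not inflate past what the sum-set bound can absorb, the classical Bourgain-Gamburd tension between mass amplification and sum-set growth. In our setting this is further complicated by the multi-scale dependence of $\epsilon_{P1}$ on $\kappa_{P1}$, and by the fact that the Frostman hypotheses on $\mu, \sigma$ are stated at the fine scale $\delta$ but must be used at the coarser working scale $r$. Provided these dependencies are monotone and quantifiable, as in the OSS framework, the argument closes within the constraints on $k_{0}$ and $\delta_{0}$ asserted in Proposition \ref{prop2}.
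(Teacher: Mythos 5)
Your reduction to a scale-by-scale $L^{2}$ bound and the high-mass branch of the dichotomy are both sound, and they match the paper's overall strategy of feeding level sets into Proposition \ref{prop1} at the coarser working scale $r$. The problem is the low-mass branch, which is where all the difficulty lives, and the iteration you propose does not do what you claim. If $M := \Pi^{k}(E) < \delta^{\epsilon_{P1}}$ and you pass to $\Pi^{Lk}$ and the $L$-fold sum $L \cdot E$, the mass you can certify is $\Pi^{Lk}(L \cdot E) \geq M^{L}$, which \emph{decreases} with $L$; there is no $L \geq 1$ for which $M^{L}$ crosses back above the threshold $\delta^{\epsilon_{P1}}$, and the formula $L \sim \log(1/\delta^{\epsilon_{P1}})/\log(1/M)$ gives $L < 1$ in the low-mass regime. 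Self-convolving a fixed probability measure never concentrates more mass on the Minkowski sums of a fixed set; it only spreads it out. So the contradiction cannot close this way, and the "Bourgain--Gamburd tension between mass amplification and sum-set growth" you invoke is not actually available in this form.

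What is missing is the genuine Bourgain--Gamburd ingredient, which the paper uses and your argument replaces with the high/low-mass split. Rather than pigeonholing $\Pi^{k}$ for a fixed $k$ and hoping the level set has large mass, one first locates a scale $k \leq \lceil 1/\epsilon \rceil$ at which the $L^{2}$ norm $J_{r}(k) := \|\Pi_{r}^{2^{k}}\|_{2}$ fails to decay, i.e.\ $J_{r}(k+1) \geq r^{2\epsilon}J_{r}(k)$; if no such $k$ exists the bound is trivial. At that non-decaying scale, the Plancherel/Cauchy--Schwarz inequality $\|\mathbf{1}_{A} * \Pi_{r}^{2^{k}}\|_{2} \leq \|\mathbf{1}_{A}*\mathbf{1}_{A}\|_{2}^{1/2}\|\Pi_{r}^{2^{k+1}}\|_{2}^{1/2}$ forces the pigeonholed level set $A$ to simultaneously carry mass $\gtrsim r^{3\epsilon}$ under $\Pi_{8r}^{2^{k}}$ and control $\|\Pi_{r}^{2^{k}}\|_{2}$ from above through $|A|_{r}^{-1/2}$. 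It is precisely this coupling, not any iteration of Minkowski sums, that guarantees the set fed into Proposition \ref{prop1} has mass above the threshold. Without the no-decay step you have no mechanism that forces the relevant level set to be heavy, and the low-mass case is a genuine gap in the proposal. A secondary but necessary point you also omit is the parabolic rescaling $F(x) = (R^{-1}x_{1}, R^{-2}x_{2})$ needed to move $\spt \mu$ from $B(R)$ into $B(1)$ before Proposition \ref{prop1} can be applied.
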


\begin{proof}
For $r>0$ and $k\in \N$ we write $\Pi^k_r:=\Pi^k\ast \psi_r$ and $J_r(k) := \Vert \Pi_r^{2^{k}}\Vert_2$. Let also $\kappa > 0$ and $\delta \in (0,1]$. Our goal is to show that for every large enough $k$ depending on $s,t$ and $\kappa$, for every $\delta \leq r \leq 1$ we have
    \begin{equation}\label{eq-goal}
J_r(k) \leq \delta^{-\kappa/4} r^{(\gamma(s,t) - 2)/2},
    \end{equation}
    provided that $\delta > 0$ is sufficiently small. From the expression \eqref{form4} together with Plancherel and a dyadic frequency decomposition it then follows that
    $$
I_{\gamma(s,t)}^\delta(\Pi^{2^{k}}) \leq \delta^{-\kappa}.
    $$
    We first observe that the sequence $\{J_r(k)\}_{k\in\N}$ is decreasing in $k$: by Young's inequality (or Plancherel), and using the notation $\|\nu\|_{1}$ for both $L^{1}$-norm and total variance,
    $$
J_r(k+1) = \Vert \Pi^{2^{k}} * \Pi_r^{2^{k}}\Vert_2 \leq \Vert \Pi^{2^{k}}\Vert_1\Vert \Pi_r^{2^{k}}\Vert_2 = \Vert \Pi_r^{2^{k}}\Vert_2 = J_r(k).
    $$
    Therefore, the value of $k$ for which \eqref{eq-goal} holds may depend on $r$ as long as it is uniformly bounded over $r$: choosing the maximum of such $k$ will then work for all $\delta\leq r\leq 1$. 
    
    If $r\geq \delta^{\kappa/8}$, then by the fact that $\gamma(s,t) \leq s+1 \leq 2$, we have
    $$
J_r(k) \leq J_r(0)\leq Cr^{-1} \leq Cr^{-2} \leq \delta^{-\kappa/4} r^{(\gamma(s,t)- 2)/2},
    $$
    and we are done.
    
    Suppose from now on that $\delta\leq r\leq \delta^{\kappa/8}$. Let
    \begin{equation}\label{choiceEpsilon} \epsilon := \frac{\bar{\epsilon}(\kappa/8,s,t)\kappa}{100}, \end{equation}
    where $\bar{\epsilon}(\kappa/8,s,t) > 0$ is the parameter appearing in the statement of Proposition \ref{prop1}. In this argument, the constants in the "$\lesssim$" notation may depend on $s,t,\kappa$, and also $\epsilon$, which is a function of $\kappa,s,t$. 
    
%
     We note that if, for every \( k \leq \lceil 1/\epsilon \rceil \), it holds that  
     \[
     J_r(k) < r^{2\epsilon} J_r(k),
     \]
     then, by applying this inequality \( \lceil 1/\epsilon \rceil \) times, we would have  
     \[
     J_r(1/\epsilon) \leq r^{2\epsilon \cdot 1/\epsilon} J_r(0) \lesssim_{R} r^2 r^{-2} = 1,
     \]
     and \eqref{eq-goal} would hold for sufficiently small \( \delta > 0 \), depending on \( R \).
     
     Thus, we may assume that there exists \( k \leq \lceil 1/\epsilon \rceil \), depending on \( r \), such that  
     \begin{equation}\label{eq-k}
     	J_r(k) \geq J_r(k+1) \geq r^{2\epsilon} J_r(k).
     \end{equation}  
     Fix such a \( k \) satisfying \eqref{eq-k} for the rest of the proof.

    Next we perform a discretisation at scale $r$ of the function $\Pi_r^{2^{k}}$. Let $\mathcal{D}_r(\R^{2})$ be the dyadic squares of side length $r$, and for each $Q \in\mathcal{D}_r(\R^{2})$, set 
    $$
a_Q := \sup_{x\in Q}\Pi_r^{2^{k}}(x).
    $$
   Define $A_j\subseteq \mathbb R^2$ as 
    $$
	{A}_j := \begin{cases}
    \bigcup\lbrace Q \in \mathcal{D}_r(\R^{2}):\ a_{Q} \leq 1\rbrace,\ &j = 0\\
    \bigcup\lbrace Q \in \mathcal{D}_r(\R^{2}) :\ 2^{j-1} < a_{Q} \leq 2^j\rbrace,\ &j\geq 1,
\end{cases}
    $$
 and note that the sets $A_j$ are disjoint for distinct $j$. Since $\|\Pi_r^{2^{k}} \|_\infty\leq r^{-2}$ for all $k\geq 1$, we have $A_j = \emptyset$ for $j\geq 4\log(1/r)+1$. In particular, 
    \begin{equation}\label{eq-discretization1}
\Pi_r^{2^{k}} \leq \sum_{j=0}^{4\log (1/r)+1} 2^j \cdot \mathbf{1}_{A_j}.
    \end{equation}
    In the reverse direction, we have the following relation, justified below:
    \begin{equation}\label{eq-discretization2}
        \sum_{j=0}^{4\log (1/r)+1} 2^j \cdot \mathbf{1}_{A_j} \leq 64\Pi_{8r}^{2^k}.
    \end{equation}
    To see this, let $Q\in\mathcal{D}_r(\R^{2})$ and let $x,x'\in Q$. Now, if $y\in\R^{2}$ is such that $\psi_r(x'-y) > 0$, then by the definition of $\psi_r$ we must have $|x'-y|\leq r$. Since $|x-x'|\leq \diam(Q) \leq 2r$, this implies that $|x-y|/(8r) \leq \bigl(|x-x'|+ |x'-y|\bigr)/(8r) \leq 1/2$ and so $\psi_{8r}(x-y) = (8 r)^{-2} \geq 64^{-1}\psi_r(x'-y)$ by the definition of $\psi_r$. Plugging this into the definition of $\Pi_r^{2^k}$ yields  
    $$
    \Pi_r^{2^k}(x') \leq 64 \Pi_{8r}^{2^k}(x).
    $$
    In particular, $a_{Q} \leq 64\Pi_{8r}^{2^k}(x)$ for any $x \in Q$, and \eqref{eq-discretization2} follows.

    Using \eqref{eq-k}-\eqref{eq-discretization2}, we may pigeonhole a ``large'' set $A$ with the following properties.
    \begin{claim}\label{claim}
    There exists $j\geq 0$ and a set $A:=A_j$ such that
    \begin{itemize}
        \item[(1) \phantomsection \label{1}] $\Vert \Pi_{r}^{2^k}\Vert_2 \lesssim |A|_r^{-\frac{1}{2}} r^{-1-3\epsilon}$ and 
        \item[(2) \phantomsection \label{2}] $\Pi_{8r}^{2^k}(A) \gtrsim r^{3\epsilon}$.
    \end{itemize}
    \end{claim}
    \begin{remark} Claim \ref{claim} is roughly saying that $m = \Pi^{2^{k}}_{r}$ is (almost) a probability measure, and the density of $m$ is a normalised indicator of the form $dm = M\mathbf{1}_{A}$ for some $M > 0$, where $A$ is a union of (dyadic) $r$-squares. Indeed, if $m$ is any measure on $\R^{2}$ with these properties, then $M = |A|^{-1}$ (here $|A|$ is the Lebesgue measure of $A$). Therefore $\|m\|_{2} = M|A|^{1/2} = |A|^{-1/2} = (|A|_{r} \cdot r^{2})^{-1/2} = |A|_{r}^{-1/2}r^{-1}$, as in Claim \ref{claim}(1). 
    
    Claim \ref{claim} only uses \eqref{eq-k}-\eqref{eq-discretization2}, so it shows that any $r$-discretised probability measure (in the sense \eqref{eq-discretization1}-\eqref{eq-discretization2}) whose $L^{2}$-norm does not decrease under convolution (in the sense \eqref{eq-k}) is (roughly) a weighted indicator function (in the sense (1)-(2)).
      \end{remark}

    \begin{proof}[Proof of Claim \ref{claim}] Taking the $L^2$-norms in \eqref{eq-discretization1}, using $\psi_{r} \lesssim \psi_{r} \ast \psi_{r}$, and applying the triangle inequality, we may pigeonhole an index $j\geq 0$ and a set $A = A_j$ such that 
    $$
\Vert \Pi_r^{2^{k+1}}\Vert_2 \lesssim \Vert \Pi_r^{2^{k}} * \Pi_r^{2^k}\Vert_2 \leq (4\log (1/r)+1) \cdot 2^j \Vert \mathbf{1}_A * \Pi_r^{2^k}\Vert_2.
    $$
    On the other hand, by Plancherel and Cauchy-Schwarz,
    $$
\Vert \mathbf{1}_A * \Pi_r^{2^k}\Vert_2 \leq \Vert \mathbf{1}_A * \mathbf{1}_A\Vert_2^{1/2} \Vert \Pi_r^{2^{k+1}}\Vert_2^{1/2}.
    $$
    Combining these two estimates with \eqref{eq-k} yields
    \begin{equation}\label{eq-boundforpi0}
        r^{2\epsilon} \Vert \Pi_r^{2^k}\Vert_2 \leq  \Vert \Pi_r^{2^{k+1}}\Vert_2 \leq (4\log (1/r)+1)^2 2^{2j} \Vert \mathbf{1}_A * \mathbf{1}_A\Vert_2 \lesssim r^{-\epsilon} 2^{2j} \Vert \mathbf{1}_A\Vert_1 \Vert\mathbf{1}_A\Vert_2.
    \end{equation}
   Here, $A$ is a union of elements in $\mathcal{D}_r(\R^{2})$, so 
    $$
\Vert \mathbf{1}_A\Vert_1 = r^2|A|_r \quad  \text{and} \quad \Vert \mathbf{1}_A\Vert_2 = r |A|_r^{1/2}.
    $$
    In particular, combining \eqref{eq-boundforpi0} with $2^j\Vert \mathbf{1}_A\Vert_1 \leq 64\Vert \Pi_{8r}^{2^k}\Vert_1 \leq 64$, we obtain
    \begin{equation}\label{eq-boundforpi}
\Vert \Pi_r^{2^k}\Vert_2 \lesssim r^{-3\epsilon} 2^{2j}\Vert \mathbf{1}_A\Vert_1 \Vert \mathbf{1}_A\Vert_2 \lesssim r^{-3\epsilon} \Vert \mathbf{1}_A\Vert_1^{-1}\Vert \mathbf{1}_A\Vert_2 \leq r^{-3\epsilon-1}|A|_r^{-\frac{1}{2}},
    \end{equation}
     concluding the proof of Part \nref{1} of the claim.
     
    Moving towards proving Part \nref{2}, we first observe that, by \eqref{eq-discretization2}, we have
    \begin{equation}\label{eq-lowerboundfor4r}
2^j \Vert \mathbf{1}_{A}\Vert_2 \leq 64\Vert \Pi_{8r}^{2^{k}}\Vert_2.
    \end{equation}
	The choice of the approximate identity implies that $\psi_{8r}\lesssim \psi_{8r}\ast\psi_{r}$, which in turn yields
    \begin{equation}\label{eq-maximalinequality}
        	\|\Pi^{2^k}_{8r}\|_2 \lesssim \|\Pi^{2^k}\ast \psi_{8r}\ast\psi_{r}\|_2\leq \|\Pi^{2^k}\ast \psi_{r}\|_2 \|\psi_{8r}\|_1=\|\Pi^{2^k}_{r}\|_2.
    \end{equation}
    Finally,
    \begin{displaymath} \|\Pi_{r}^{2^{k}}\|_{2} \stackrel{\eqref{eq-boundforpi}}{\lesssim} r^{-3\epsilon}2^{2j}\|\mathbf{1}_{A}\|_{1}\|\mathbf{1}_{A}\|_{2} \stackrel{\eqref{eq-lowerboundfor4r}}{\lesssim} r^{-3\epsilon}2^{j}\|\mathbf{1}_{A}\|_{1}\|\Pi^{2^{k}}_{8r}\|_{2} \stackrel{\eqref{eq-maximalinequality}}{\lesssim} r^{-3\epsilon}2^{j}\|\mathbf{1}_{A}\|_{1}\|\Pi^{2^{k}}_{r}\|_{2}, \end{displaymath}
    so $r^{3\epsilon} \lesssim 2^j \cdot \Vert \mathbf{1}_A\Vert_1$. Now \eqref{eq-discretization2} allows us to conclude Part \nref{2} of Claim \ref{claim}. \end{proof}
    
   Abbreviate $\bar{\epsilon} := \epsilon(\kappa/8,s,t)$, the constant from Proposition \ref{prop1}, and recall from \eqref{choiceEpsilon} that $\epsilon \leq \bar{\epsilon}/4$. By Part \nref{2} of Claim \ref{claim}, and since $\spt \psi_r \subset B(r)$, 
    \begin{equation}\label{eq-largemeasure}
    \Pi^{2^k}([A]_{8r}) \geq \Pi_{8r}^{2^k}(A) \gtrsim r^{3\epsilon} \geq r^{\bar{\epsilon}}.
    \end{equation}
    Next we wish to apply Proposition \ref{prop1} for $\Pi = \mu*\nu$. This would require the measure $\mu$ to be supported on $B(1)$, but now $\mu$ is supported on $B(R)$. If $R \geq 1$, we will apply the the parabolic scaling $F(x):= (R^{-1}x_1, R^{-2}x_2)$, for $x=(x_1,x_2)\in \R^2$. Note, in particular, that $\spt F_{\sharp}\mu \subset B(1)$ and $\spt F_\sharp\sigma \subset \mathbb{P}$, and  $|F(x) - F(y)| \geq R^{-2}|x - y|$ for $x,y \in \R^{2}$. Since
    $$
    (F_\sharp\mu)_r(F(x))= \frac{1}{r^2}\int \psi\left(\frac{F(x) - F(y)}{r}\right)\,d\mu(y) \leq \frac{R^4}{(R^2r)^2}\int \psi\left(\frac{ x - y}{R^2r}\right)\,d\mu(y) = R^4 \mu_{R^2r}(x)
    $$
    and $R^{-2}\vert x\vert \leq \vert F(x)\vert \leq R^{-1}\vert x \vert$ for every $x\in\R^2$, we have the following bound for the $(r R^{-2})$-discretised energy of $F_\sharp\mu$:
    \begin{align*}
        I_t^{r R^{-2}}(F_\sharp\mu) &= \iint |x-y|^{-t} (F_\sharp\mu)_{R^{-2}r}(x)(F_\sharp\mu)_{R^{-2}r}(y)\,dx\,dy \\
        &\leq R^{2t+8}\iint |F^{-1}(x-y)|^{-t} \mu_r(F^{-1}(x))\mu_r(F^{-1}(y))\,dx\,dy \\
        &= R^{2t+11}\iint |x-y|^{-t} \mu_r(x)\mu_r(y)\,dx\,dy \\
&\leq R^{15} I^r_t(\mu) \stackrel{\eqref{form23}}{\lesssim} R^{15} I^{\delta}_{t}(\mu) \leq R^{15} r^{-8\epsilon/\kappa},
    \end{align*}
    where in the final estimate we used the inequality $I^\delta_t(\mu)\leq \delta^{-\epsilon}$ and the assumption $r\leq \delta^{\kappa/8}$. Similarly, $I_s^{r R^{-2}}(F_\sharp\sigma) \leq R^{15} r^{-8\epsilon/\kappa}$. Since $\epsilon < \bar{\epsilon}\kappa/16$, we have 
    $$
R^{15}r^{-8\epsilon/\kappa} \leq R^{15}r^{-\bar{\epsilon}/2} \leq (r R^{-2})^{-\bar{\epsilon}}
    $$
    provided $R^{15} \leq r^{-\bar{\epsilon}/2}$. Since $r \leq \delta^{\kappa/8}$ in the case we are now considering, this is true for $\delta > 0$ so small that $\delta^{-\bar{\epsilon}\kappa/16} \geq R^{15}$ (this is why the upper bound for "$\delta$" in the statement of Proposition \ref{prop2} is allowed to depend on $R$). For such values of $\delta > 0$, Proposition \ref{prop1} applied for $F_\sharp\Pi = (F_{\sharp}\mu) \ast (F_{\sharp}\sigma)$ (recalling \eqref{eq-largemeasure}) asserts that
    $$
    |A|_{r} \gtrsim R^{-2}|F([A]_{8r}))|_{r R^{-2}} \geq R^{-2}(r R^{-2})^{-\gamma(s,t) + \kappa/8} \geq r^{-\gamma(s,t) + \kappa/4}
    $$
 assuming again that $\delta > 0$, and hence $r$, is small enough depending on $R$. Inserting this into Part \nref{1} of Claim \ref{claim} we obtain
    $$
\Vert \Pi_r^{2^k}\Vert_2 \lesssim r^{(\gamma(s,t) - 2)/2- \kappa/8 - 3\epsilon} \leq \delta^{-\kappa/4}r^{(\gamma(s,t)-2)/2}
    $$
    using $\delta \leq r$ and $\epsilon\leq \kappa/24$. This completes the proof of \eqref{eq-goal}, and Proposition \ref{prop2}. \end{proof}

\begin{cor}\label{cor1} For all $s \in (0,1]$, $t \in [0,\min\{3s,s + 1\})$ and $\kappa > 0$, there exist $\epsilon = \epsilon(\kappa,s,t) > 0$, $\delta_{0} = \delta_{0}(\kappa,s,t) > 0$, and $k_{0} = k_{0}(\kappa,s,t) \in \N$ such that the following holds for all $\delta \in (0,\delta_{0}]$. Assume that $\sigma$ is a Borel probability measure supported on $\mathbb{P}$ with $I_{s}^{\delta}(\sigma) \leq \delta^{-\epsilon}$. Then,
\begin{equation}\label{form6} I_{t}^{\delta}(\sigma^{k}) \leq \delta^{-\kappa}, \qquad k \geq k_{0}. \end{equation} \end{cor}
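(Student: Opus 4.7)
My plan is to bootstrap $I_s^\delta(\sigma) \leq \delta^{-\epsilon}$ up to $I_t^\delta(\sigma^k) \leq \delta^{-\kappa}$ by iterating Proposition \ref{prop2} finitely many times; at each iteration I convolve the current measure once more with $\sigma$ and take a suitable self-convolution power. The guiding observation is that $\gamma(s,\tau) > \tau$ for every $\tau \in [0, \min\{3s, s+1\})$, with the gap $\gamma(s,\tau) - \tau$ uniformly bounded below on every compact subinterval. This lets me fix a finite, strictly increasing dimension chain
\begin{displaymath} s = t_0 < t_1 < \cdots < t_n, \qquad t_{j+1} < \gamma(s,t_j), \qquad t_n > t, \end{displaymath}
whose length $n = n(s,t)$ depends only on $s$ and $t$.

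I will maintain throughout the iteration a measure $\mu_j = \sigma^{m_j}$ (so a self-convolution of $\sigma$) satisfying $I_{t_j}^\delta(\mu_j) \leq \delta^{-\epsilon_j}$ for tolerances $\epsilon_j > 0$ to be determined. The base case is $\mu_0 = \sigma$ with $m_0 = 1$ and $\epsilon_0 := \epsilon$. For the inductive step, I apply Proposition \ref{prop2} with parameters $(s, t_j, \epsilon_{j+1})$ and $R = O(m_j)$ (the support radius of $\mu_j$) to the pair $(\mu_j, \sigma)$, obtaining an integer $k_j$ such that
\begin{displaymath} I_{\gamma(s, t_j)}^\delta\bigl((\mu_j \ast \sigma)^{k_j}\bigr) \leq \delta^{-\epsilon_{j+1}} \end{displaymath}
for all sufficiently small $\delta$. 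Setting $\mu_{j+1} := (\mu_j \ast \sigma)^{k_j} = \sigma^{(m_j+1)k_j}$ and invoking the elementary inequality $I_{t_{j+1}}^\delta(\nu) \leq I_{\gamma(s,t_j)}^\delta(\nu) + 1$ for probability measures $\nu$ (split the energy integral at $|x-y| = 1$, using $t_{j+1} < \gamma(s,t_j)$) then closes the induction at the next level.

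The tolerances must be chosen backwards: declare $\epsilon_n := \kappa$ and let $\epsilon_j$ be the $\epsilon$-value produced by Proposition \ref{prop2} when it is applied with inputs $(s, t_j, \epsilon_{j+1})$. Finally set $\epsilon := \epsilon_0$, $k_0 := m_n$, and $\delta_0$ to be the minimum of the $\delta$-thresholds across the $n$ steps. All of these depend only on $s, t, \kappa$, because the iteration length is $n = n(s,t)$ and every $m_j$ is in turn determined by $s, t, \kappa$. At the end of the iteration, $I_{t_n}^\delta(\sigma^{k_0}) \leq \delta^{-\kappa}$, and since $t < t_n$ the same trivial split yields \eqref{form6} for $k = k_0$. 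For larger $k$ I write $\sigma^k = \sigma^{k_0} \ast \sigma^{k - k_0}$ and note via \eqref{form4} that $|\widehat{\sigma^{k-k_0}}(\xi)| \leq 1$ forces $I_t^\delta(\sigma^k) \leq I_t^\delta(\sigma^{k_0})$, delivering \eqref{form6} for every $k \geq k_0$.

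The one genuine subtlety is that the support radius $R_j \sim m_j$ grows along the iteration and Proposition \ref{prop2} couples its threshold $\delta_0$ to $R$; but since the iteration length is controlled by $s$ and $t$ alone, and each $m_j$ is determined by $s, t, \kappa$, this coupling creates no obstruction. Everything else is pure parameter bookkeeping on top of Proposition \ref{prop2}.
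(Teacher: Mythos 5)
Your proposal is correct and follows essentially the same route as the paper: iterate Proposition \ref{prop2} along a finite dimension chain from $s$ up past $t$, choose the tolerances $\epsilon_j$ backwards from $\epsilon_n = \kappa$ via the $\epsilon$-output of Proposition \ref{prop2}, and control the growing support radius $R_j$ by observing that the whole iteration (including the convolution powers $m_j$) is determined once $s,t,\kappa$ are fixed. The only cosmetic difference is that you build a strictly increasing chain $t_{j+1} < \gamma(s,t_j)$ and then transfer from $\gamma(s,t_j)$-energy to $t_{j+1}$-energy via the trivial split, which leaves an extra additive constant in the inductive bound ($\delta^{-\epsilon_{j+1}}+1$ rather than $\delta^{-\epsilon_{j+1}}$); the paper avoids this small slack by setting $t_{j+1} := \gamma(s,t_j)$ exactly, so Proposition \ref{prop2} lands directly on the desired exponent. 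That slack is harmless (absorb it into $\delta_0$ or tighten the $\kappa$ fed to Proposition \ref{prop2}), but worth noting since as written your inductive invariant is not literally preserved. Otherwise the bookkeeping, including the final monotonicity step $I_t^\delta(\sigma^k)\le I_t^\delta(\sigma^{k_0})$ for $k\ge k_0$, matches the paper's argument.
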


\begin{proof} The main idea will be to iterate Proposition \ref{prop2} with $\sigma$ fixed, but $\mu$ of the form $\mu = \sigma^{k}$ for increasing values of $k \in \N$. We will often implicitly use the fact that the sequence of Riesz-energies $\{I_{u}(\nu^{k})\}_{k \in \N}$ is decreasing in $k$, whenever $\nu$ is a probability measure on $\R^{2}$. This is immediate from \eqref{form4}, and since
\begin{displaymath} |\widehat{\nu^{k + 1}}(\xi)| = |\widehat{\nu^{k}}(\xi)||\hat{\nu}(\xi)| \leq |\widehat{\nu^{k}}(\xi)|, \qquad \xi \in \R^{2}. \end{displaymath}

Consider the sequence of exponents $\{t_{j}\}_{j = 0}^{\infty} \subset [s,\min\{3s,s + 1\}]$, where
\begin{displaymath} t_{0} := s \quad \text{and} \quad t_{j + 1} := \gamma(s,t_{j}) := \min\left\{s + t_{j},\tfrac{3s + t_{j}}{2},s + 1\right\}. \end{displaymath}
It is easy to check that for $t \in [0,\min\{3s,s + 1\})$ given, we have $t_{j} \geq t$ for all $j \geq j_{0}(s,t)$ (one can read off from \eqref{form12} that $2^{-j + 2} \leq (3s - t)/s$ suffices). Therefore, Corollary \ref{cor1} will follow once we manage to show the following: for $j \geq 0$ and $\kappa_{j} > 0$ fixed,
\begin{equation}\label{form7} I_{t_{j}}^{\delta}(\sigma^{k}) \leq \delta^{-\kappa_{j}}, \qquad k \geq k(j,\kappa_{j},s), \end{equation}
provided that $\epsilon = \epsilon(\kappa_{j},j,s) > 0$ is sufficiently small in the hypothesis $I_{s}(\sigma) \leq \delta^{-\epsilon}$, and $\delta > 0$ is small enough in terms of $s,j,\kappa_{j},\epsilon$. Once this has been verified, \eqref{form6} follows by taking $j := j_{0}(s,t)$ and $\kappa_{j} := \kappa$.

We prove \eqref{form7} by induction on "$j$". The case $j = 0$ is clear, because $t_{0} = s$: we can simply take $\epsilon := \kappa_{0}$ and $k(0,\kappa_{0},s) := 1$. Let us then assume that \eqref{form7} has already been established for a certain $j \geq 0$, and for all $\kappa_{j} > 0$.

We then consider the case "$j + 1$". Fix $\kappa_{j + 1} > 0$, and apply Proposition \ref{prop2} with the parameters $s,t_{j},\kappa_{j + 1}$, and $R > 0$ to be determined in a moment. This provides us with another parameter $\bar{\epsilon} = \epsilon(\kappa_{j + 1},s,t_{j}) > 0$ such that the following holds. Assume that $I_s^\delta(\sigma) \leq \delta^{-\bar{\epsilon}}$ and $\mu$ is a probability measure on $B(R)$ with $I_{t_{j}}^{\delta}(\mu) \leq \delta^{-\bar{\epsilon}}$, and write $\Pi := \mu \ast \sigma$. Then, if $\delta \leq \delta_{0}(s,t_{j},\kappa_{j + 1},R)$, and $k \geq k(\kappa_{j + 1},s,t_{j})$, 
\begin{equation}\label{form8} I_{t_{j + 1}}^{\delta}(\Pi^{k}) = I^{\delta}_{\gamma(s,t_{j})}(\Pi^{k}) \leq \delta^{-\kappa_{j + 1}}. \end{equation}

Now, we apply the inductive hypothesis \eqref{form7} at index "$j$" with the choice 
\begin{displaymath} \kappa_{j} := \bar{\epsilon} = \epsilon(\kappa_{j + 1},s,t_{j}) > 0. \end{displaymath}
The conclusion is that if $I^{\delta}_{s}(\sigma) \leq \delta^{-\epsilon}$ for $\epsilon = \epsilon(\kappa_{j},j,s) > 0$ small enough, and if $k \geq k(j,\kappa_{j},s)$, then in fact $I^{\delta}_{t_{j}}(\sigma^{k}) \leq \delta^{-\kappa_{j}} = \delta^{-\bar{\epsilon}}$. Thus, Proposition \ref{prop2} is applicable to the measure $\mu := \sigma^{k}$ for any $k \geq k(j,\kappa_{j},s)$. For concreteness, we define
\begin{displaymath} \mu := \sigma^{k(j,\kappa_{j},s)}, \end{displaymath}
and $\Pi := \mu \ast \sigma = \sigma^{k(j,\kappa_{j},s) + 1}$. Notice that 
\begin{displaymath} \spt \mu \subset B(2^{k(j,\kappa_{j},s)}) =: B(R). \end{displaymath}
This determines the choice of $R$ (there is no circular reasoning, because the choice of $R$ does not affect the exponent $\bar{\epsilon} = \epsilon(\kappa_{j + 1},s,t_{j})$). We may deduce from \eqref{form8} that if $\delta \leq \delta_{0}(s,t_{j},\kappa_{j + 1},R)$ (all of these constants eventually only depend on $\kappa_{j + 1},j,s$, as required), then
\begin{displaymath} I^{\delta}_{t_{j + 1}}(\sigma^{[k(j,\kappa_{j},s) + 1] \cdot k}) = I_{t_{j + 1}}^{\delta}(\Pi^{k}) \leq \delta^{-\kappa_{j + 1}}, \qquad k \geq k(\kappa_{j + 1},s,t_{j}). \end{displaymath}
In particular, provided that $I_{s}^{\delta}(\sigma) \leq \delta^{-\epsilon}$, 
\begin{displaymath} I_{t_{j + 1}}^{\delta}(\sigma^{k}) \leq \delta^{-\kappa_{j + 1}}, \qquad k \geq [k(j,\kappa_{j},s) + 1] \cdot k(\kappa_{j + 1},s,t_{j}). \end{displaymath}
This verifies \eqref{form7} for index "$j + 1$", and completes the proof of the corollary. \end{proof}

\section{Proofs of Theorem \ref{main} and Corollary \ref{cor2}}\label{s3}

We are then prepared to prove Theorem \ref{main}.

\begin{proof}[Proof of Theorem \ref{main}] Fix $s \in [0,1]$, $t \in [0,\min\{3s,s + 1\})$, and a Borel measure on $\mathbb{P}$ satisfying $\sigma(B(x,r)) \leq r^{s}$ for all $x \in \R^{2}$ and $r > 0$. This implies, in particular, that $I^{\delta}_{s}(\sigma) \lesssim \log(1/\delta)$ for all $\delta \in (0,1]$. Our claim is that if $p \geq 1$ is sufficiently large, depending only on $s,t$, then
\begin{equation}\label{form9} \|\hat{\sigma}\|_{L^{p}(B(R))} \leq C_{s,t}R^{(2 -t)/p}, \qquad R \geq 1. \end{equation}
This is clear if $s = 0$ (since $\|\hat{\sigma}\|_{\infty} \leq \|\sigma\| \lesssim 1$), so we may assume $s \in (0,1]$.

Fix $R \geq 1$ and $t < u < \min\{3s,s + 1\}$, and let $\{\varphi_{\delta}\}_{\delta > 0}$ be an approximate identity with the property $|\widehat{\varphi_{\delta}}(\xi)| \gtrsim 1$ for $|\xi| \leq \delta^{-1}$. Then, writing $\delta := R^{-1}$, and for $p = 2k \in 2\N$,
\begin{align*} \|\hat{\sigma}\|_{L^{p}(B(R))}^{p} & = \int_{B(R)} |\hat{\sigma}(\xi)|^{p} \, d\xi = \int_{B(R)} |\widehat{(\sigma^{k})}(\xi)|^{2} \, d\xi\\
& \lesssim \int_{B(R)} |\widehat{(\sigma^{k})_{\delta}}(\xi)|^{2} \, d\xi \leq R^{2 - u} \int |\widehat{(\sigma^{k})_{\delta}}(\xi)|^{2}|\xi|^{u - 2} \, d\xi \sim_{u} R^{2 - u}I^{\delta}_{u}(\sigma^{k}). \end{align*}
For concreteness, let us pick $u := \tfrac{1}{2}(t + \min\{3s,s + 1\})$, and write $\kappa := u - t > 0$ (thus both $\kappa,s$ are functions of $s,t$). Now, according to \eqref{form6}, we have $I_{u}^{\delta}(\sigma^{k}) \leq \delta^{-\kappa} = R^{u - t}$, provided that $k \geq k_{0}(s,t)$, and $\delta \leq \delta_{0}(s,t)$; the hypothesis $I_{s}^{\delta}(\sigma) \leq \delta^{-\epsilon}$ is automatically satisfied for $\delta > 0$ small enough, since $I_{s}^{\delta}(\sigma) \lesssim \log(1/\delta)$. For such $k$, and $p = 2k$, we have now established that
\begin{displaymath} \|\hat{\sigma}\|_{L^{p}(B(R))}^{p} \lesssim_{s,t} R^{2 - u}I_{u}^{\delta}(\sigma^{k}) \leq R^{2 - t}, \qquad R \geq 1/\delta_{0}. \end{displaymath}
This formally implies \eqref{form6}, since $\delta_{0} > 0$ only depends on $s,t$. \end{proof}

Finally, we record how Corollary \ref{cor2} follows by combining Theorem \ref{main} with the following simple relation between the ($\delta$-discrete) additive energy of a finite set and the ($\delta$-discrete) energy of the counting measure on it.
\begin{proposition}\label{prop52}
    Let $P\subseteq \R^2$ be a finite set, and let $\sigma := |P|^{-1}\mathcal{H}^{0}|_{P}$ denote the normalised counting measure on $P$. For any $t,\delta>0$ and $n\in\N$ such that $I_t^\delta(\sigma^n) \lesssim 1$, it holds that 
    \begin{displaymath}
|\{(p_{1},\ldots,p_{n},q_{1},\ldots,q_{n}) \in P^{2n} : |(p_{1} + \ldots + p_{n}) - (q_{1} + \ldots + q_{n})| \leq \delta\}| \lesssim \delta^t |P|^{2n}.
   \end{displaymath}
\end{proposition}

\begin{proof}[Proof of Proposition \ref{prop52}] Note that 
\begin{displaymath} \frac{|\{(p_{1},\ldots,p_{n},q_{1},\ldots,q_{n}) \in P^{2n} : |(p_{1} + \ldots + p_{n}) - (q_{1} + \ldots + q_{n})| \leq \delta\}|}{|P|^{2n}} = \int \sigma^{n}(B(z,\delta)) \, d\sigma^{n}(z). \end{displaymath}
Let $\{\psi_{\delta}\}_{\delta > 0} \subset C^{\infty}(\R^{2})$ be an approximate identity of the standard form $\psi_{\delta}(z) = \delta^{-2}\psi(z/\delta)$, where $\mathbf{1}_{B(1/2)} \leq \psi \leq \mathbf{1}_{B(1)}$. Writing $\sigma^{n}_{\delta} := (\sigma^{n}) \ast \psi_{\delta}$, it is then easy to check that
\begin{displaymath} \sigma^{n}(B(z,\delta)) \lesssim \delta^{2} \inf_{x \in B(z,\delta)} \sigma^{n}_{4\delta}(x), \end{displaymath}
and in particular $\sigma^{n}(B(z,\delta)) \lesssim \int_{B(z,\delta)} \sigma^{n}_{4\delta}(x) \, dx$. Therefore,
\begin{align*} \int \sigma^{n}(B(z,\delta)) \, d\sigma^{n}(z) & \lesssim \int \int_{B(z,\delta)} \sigma_{4\delta}^{n}(x) \, dx \, d\sigma^{n}(z)\\
& = \int \sigma_{4\delta}^{n}(x) \sigma^{n}(B(x,\delta)) \, dx \lesssim \delta^{2} \|\sigma_{4\delta}^{n}\|_{2}^{2}. \end{align*}
As explained in Remark \ref{rem2}, the condition $I_t^\delta(\sigma^n) \lesssim 1$ implies that $\delta^{2}\|\sigma^{n}_{4\delta}\|_{2}^{2} \lesssim \delta^{t}$. This completes the proof. 
\end{proof}

\begin{proof}[Proof of Corollary \ref{cor2}]
Let $\sigma := |P|^{-1}\mathcal{H}^{0}|_{P}$ denote the normalised counting measure on $P$. Using the $(\delta,s)$-set condition of $P$, the measure $\sigma$ satisfies $\sigma(B(x,r)) \lesssim r^{s}$ for all $\delta \leq r \leq 1$. This is good enough to apply Theorem \ref{main} at scale $R = \delta^{-1}$ (it would be possible to apply Theorem \ref{main} literally to the measure $\sigma_{\delta} = \sigma \ast \psi_{\delta}$, but in fact the proof of Theorem \ref{main} with given $R \geq 1$ only uses the non-concentration condition of $\sigma$ on scales $[R^{-1},1]$). We infer from the equivalent formulation in Remark \ref{rem2} that $I^\delta_t(\sigma^n) \lesssim_{s,t} 1$, provided that $n \geq 1$ is sufficiently large, depending on $s,t$. An application of Proposition \ref{prop52} completes the proof of Corollary \ref{cor2}. 
\end{proof}

\section{Proof of Proposition \ref{mainProp}}\label{s5}

\begin{thm}\label{thm2} Let $s \in (\tfrac{1}{2},1]$ and $t \in (0,2)$. Let $\mu$ be a Borel measure on $B(1)$ with $I_{t}(\mu) \leq 1$, and let $\sigma$ be a Borel measure on $\mathbb{P}$ satisfying $\sigma(B(x,r)) \leq r^{s}$ for all $x \in \R^{2}$ and $r > 0$. Then, 
\begin{equation}\label{form30} \|(\mu \ast \sigma)_{\delta}\|_{L^{2}(\mathbb R^2)}^{2} \lesssim_{s,t,\epsilon} \delta^{\zeta(s,t) - 2 - \epsilon}, \qquad \epsilon > 0, \end{equation}
where $\zeta(s,t) = \min\{t + (2s - 1),s + 1\}$. Consequently, $I_{\zeta(s,t) - \epsilon}(\mu \ast \sigma) \lesssim_{s,t,\epsilon} 1$ for $\epsilon > 0$. \end{thm}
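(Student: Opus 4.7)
The approach is Fourier-analytic. By Plancherel,
\[
\|(\mu \ast \sigma)_{\delta}\|_{L^{2}}^{2} = \int |\hat{\mu}|^{2}|\hat{\sigma}|^{2}|\hat{\psi}_{\delta}|^{2}\, d\xi \lesssim \int_{|\xi| \leq \delta^{-1}} |\hat{\mu}|^{2}|\hat{\sigma}|^{2}\, d\xi,
\]
and I would dyadically decompose by frequency scale $|\xi| \sim R = 2^{j}$ with $1 \leq R \leq \delta^{-1}$, reducing matters to the shell estimate $S(R) := \int_{|\xi| \sim R} |\hat{\mu}|^{2} |\hat{\sigma}|^{2}\, d\xi \lesssim_{\epsilon} R^{\epsilon} R^{2 - \zeta(s,t)}$. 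Summing geometrically over $R$ then yields \eqref{form30} up to a harmless $\delta^{-\epsilon}$-loss.

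The two principal inputs for $S(R)$ are (a) $\int_{|\xi|\sim R}|\hat\mu|^{2}\, d\xi \lesssim R^{2-t}$, from $I_{t}(\mu) \leq 1$; and (b) $\int_{|\xi|\sim R}|\hat\sigma|^{4}\, d\xi \lesssim_{\epsilon} R^{\epsilon} R^{2-2s}$, which is the Fourier-side expression of $I_{2s-\epsilon}(\sigma \ast \sigma) \lesssim_{\epsilon} 1$ established in \cite[Theorem 1.1]{MR4528124}. A direct Cauchy--Schwarz on the shell (using $\|\hat\mu\|_{\infty} \leq 1$ to upgrade $|\hat\mu|^{2}$ to $|\hat\mu|^{4}$) gives $S(R) \lesssim R^{2-s-t/2}$, matching the target $R^{2-\zeta(s,t)}$ precisely in the range $t \leq 2(1-s)$. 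To cover the remaining range $t > 2(1-s)$ --- so as to extract the gain $2s-1$ rather than $s - t/2 + 1$ --- I would refine via a parabolic cap decomposition: write $\sigma = \sum_{\theta} \sigma_{\theta}$ with $\sigma_{\theta}$ the restriction of $\sigma$ to an arc $\theta$ of length $R^{-1/2}$. The Frostman condition yields $\sigma(\theta) \lesssim R^{-s/2}$, and $\widehat{\sigma_{\theta}}$ is essentially wave-packet localized to a frequency tube $T_{\theta}$ of dimensions $R^{1/2} \times R$, oriented normal to the parabola at $\theta$. Since the tubes are transversal, one obtains (up to rapidly decaying Schwartz tails)
\[
S(R) \lesssim \sum_{\theta} \int_{T_{\theta}} |\hat{\mu}|^{2} |\widehat{\sigma_{\theta}}|^{2}\, d\xi \leq R^{-s} \sum_{\theta} \int_{T_{\theta}} |\hat\mu|^{2}\, d\xi,
\]
and the remaining task is to bound $\sum_{\theta}\int_{T_{\theta}}|\hat\mu|^{2}$ using $I_{t}(\mu) \leq 1$ and the transversality of the tubes; the saturation $\zeta = s+1$ in the range $t > 2-s$ should emerge naturally once the cap geometry fills up the available angular directions.

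The main obstacle will be extracting exactly the exponent $t + 2s - 1$ (and the saturating $s+1$) from this cap-tube analysis. The pointwise Fourier decay $|\hat{\sigma}(\xi)|^{2} \lesssim |\xi|^{-(2s-1)}$, which would give the target immediately by duality against the $I_{t}$-energy of $\mu$, is not available for a general $s$-Frostman measure $\sigma$ on $\mathbb{P}$; instead one must couple the \emph{average} Fourier decay of $\hat\sigma$ on shells to the anisotropic structure of $\hat\mu$ along the tubes $T_{\theta}$, possibly after pigeonholing the caps and tubes by the mass they carry. Once the shell estimate $S(R) \lesssim R^{\epsilon} R^{2 - \zeta(s,t)}$ is in place, summing over dyadic scales $R \leq \delta^{-1}$ produces \eqref{form30}, and the final assertion $I_{\zeta(s,t) - \epsilon}(\mu \ast \sigma) \lesssim_{\epsilon} 1$ follows from the standard equivalence between $L^{2}$-Sobolev estimates at scale $\delta$ and Riesz-energy bounds recalled in Remark \ref{rem2}.
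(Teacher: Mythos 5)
Your first half is fine: Plancherel, the dyadic shell reduction, the bound $\int_{|\xi| \sim R}|\hat{\mu}|^{2}\,d\xi \lesssim R^{2-t}$ from $I_{t}(\mu) \leq 1$, and Cauchy--Schwarz against $\int_{|\xi|\sim R}|\hat{\sigma}|^{4}\,d\xi \lesssim_{\epsilon} R^{2-2s+\epsilon}$ do give $S(R) \lesssim_{\epsilon} R^{2-s-t/2+\epsilon}$, which settles the range $t \leq 2(1-s)$. But the main range $t > 2(1-s)$ is where the theorem lives, and there your argument has a genuine gap built on a false step. You claim that $\widehat{\sigma_{\theta}}$ is ``essentially wave-packet localized'' to a tube $T_{\theta}$ of dimensions $R^{1/2}\times R$ normal to the parabola at $\theta$, so that the cross terms $\int_{|\xi|\sim R}|\hat{\mu}|^{2}\widehat{\sigma_{\theta}}\overline{\widehat{\sigma_{\theta'}}}\,d\xi$ are negligible. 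This localization is a stationary-phase fact for caps carrying \emph{smooth} densities; for a general $s$-Frostman measure on a cap (the situation here) $\widehat{\sigma_{\theta}}$ has no pointwise decay whatsoever and is not concentrated on any tube --- the only structure available is local constancy at the dual scale, which does not restrict its support. Worse, the step is self-refuting: since the tubes $T_{\theta}$ are finitely overlapping, your diagonalized expression is bounded by $R^{-s}\sum_{\theta}\int_{T_{\theta}}|\hat{\mu}|^{2} \lesssim R^{-s}\int_{|\xi|\sim R}|\hat{\mu}|^{2} \lesssim R^{2-s-t}$, i.e.\ it would prove $\zeta = s+t$ smoothing, strictly stronger than the exponent $t+2s-1$ of the theorem whenever $s<1$. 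If such an argument were valid the authors would not have stopped at $t+2s-1$; the fact that it overshoots is a clear sign the orthogonality claim cannot hold. You yourself flag the extraction of $t+2s-1$ (and of the saturating $s+1$) as ``the main obstacle,'' to be handled by an unspecified coupling of average Fourier decay with the anisotropy of $\hat{\mu}$, so the heart of the proof is left open rather than proved.

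For comparison, the paper avoids Fourier analysis here entirely: it discretizes \eqref{form30} to a statement about $\delta$-measures (Propositions \ref{thm3} and \ref{thm4}), pigeonholes the weights $\mu(p)$, $\sigma_{p}(q)$ to be essentially constant, stratifies the target squares by their richness $r$, and bounds each rich set via the Fu--Ren incidence inequality for Katz--Tao $(\delta,s)$- and $(\delta,t)$-sets, transported from tubes to translated parabolas by the map $\Psi$ (Theorem \ref{t:fuRenParabolas}); this is exactly where the exponent $2s+t-3$, i.e.\ $\zeta(s,t)=t+2s-1$, comes from. If you want to salvage a Fourier-side argument, you would need an input playing the role of that incidence bound on average over caps --- the pointwise or tube-localized decay you assumed is simply not available for fractal $\sigma$.
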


\begin{remark} Note that in proving Theorem \ref{thm2}, we may assume $s + t \leq 2$. Let us see how to reduce the case $s + t > 2$ to the case $s + t \leq 2$. Assume $s + t > 2$. Then $\zeta(s,t) = s + 1$. Set $\underline{t} := 2 - s < t$, and note that $I_{\underline{t}}(\mu) \lesssim I_{t}(\mu) \leq 1$. Now $s + \underline{t} \leq 2$, so taking that case for granted, we may deduce \eqref{form30} with exponent $\zeta(s,\underline{t})$. However, $\zeta(s,\underline{t}) = s + 1 = \zeta(s,t)$.

A second remark is that the proof of Theorem \ref{thm2} does not explicitly use that $s > \tfrac{1}{2}$, but \eqref{form30} is trivial (that is: follows from $I_{t}(\mu) \leq 1$) if $s \leq \tfrac{1}{2}$. \end{remark}

 Theorem \ref{thm2} will be deduced from a $\delta$-discretised statement, Proposition \ref{thm3}. To pose the statement rigorously, we introduce the following terminology:
\begin{definition}[$\delta$-measures] A collection of non-negative weights $\boldsymbol{\mu} = \{\mu(p)\}_{p \in \mathcal{D}_{\delta}}$ with $\|\boldsymbol{\mu}\| := \sum_{p} \mu(p) \leq 1$ is called a \emph{$\delta$-measure}. We say that a $\delta$-measure $\boldsymbol{\mu}$ is a $(\delta,s,C)$-measure if $\boldsymbol{\mu}(Q) \leq C\ell(Q)^{s}$ for all $Q \in \mathcal{D}_{r}$ with $\delta \leq r \leq 1$. We also define the $s$-dimensional energy
\begin{displaymath} I_{s}(\boldsymbol{\mu}) := 1 + \sum_{p \neq q} \frac{\mu(p)\mu(q)}{\dist(p,q)^{s}}, \end{displaymath}
where $\dist(p,q)$ is the distance between the midpoints of $p,q$. \end{definition}

\begin{proposition}[Discretised version of Theorem \ref{thm2}]\label{thm3} Let $s \in [0,1]$ and $t \in (0,2)$ such that $s + t \leq 2$. Let $\boldsymbol{\mu}$ be a $\delta$-measure. For every $p \in \mathcal{D}_{\delta}$, let $\boldsymbol{\sigma}_{p}$ be a $(\delta,s,1)$-measure supported on $p + [\mathbb{P}]_\delta := \{q \in \mathcal{D}_{\delta} : q \cap (p + [\mathbb{P}]_\delta) \neq \emptyset\}$. Then,
\begin{equation}\label{form29} \int \Big( \sum_{p \in \mathcal{D}_{\delta}} \mu(p) \sum_{q \in \mathcal{D}_{\delta}} \sigma_{p}(q) \cdot (\delta^{-2}\mathbf{1}_{q}) \Big)^{2} \, dx \lesssim_{\epsilon} I_{t}(\boldsymbol{\mu})\delta^{2s + t - 3 - \epsilon}, \qquad \epsilon > 0. \end{equation}
\end{proposition}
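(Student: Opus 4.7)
The plan is to expand $\|F\|_2^2$ as a bilinear form in $\mu$, estimate each pairwise interaction through the transversal geometry of translated parabolas, and then extract the desired exponent through the $t$-energy of $\mu$ combined with the Ren--Wang Furstenberg theorem. Orthogonality of the indicators $\{\mathbf{1}_q\}_{q \in \mathcal{D}_\delta}$ (rescaled by $\delta^{-2}$) gives
\[
\int F^2\,dx \;=\; \delta^{-2} \sum_{p,p'} \mu(p)\mu(p') \sum_q \sigma_p(q)\sigma_{p'}(q).
\]
Write $(a,b) := p - p'$. The translates $p + \mathbb{P}$ and $p' + \mathbb{P}$ meet within the relevant domain (transversally, at angle $\sim|a|$) if and only if $|b| \lesssim |a|$, and in that case their $\delta$-neighbourhoods intersect in a parallelogram of dimensions $\delta \times \delta/|a|$. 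Applying the $(\delta, s, 1)$-Frostman condition on $\sigma_p$ to that set yields the basic interaction estimate $\sum_q \sigma_p(q)\sigma_{p'}(q) \lesssim \delta^{2s}/\max(\delta, |a|)^s$ for such ``close'' pairs, and zero otherwise. Noting $|a| \sim |p-p'|$ on close pairs, this reduces the proposition to proving
\[
\sum_{(p,p')\,\text{close}} \frac{\mu(p)\mu(p')}{\max(\delta, |p-p'|)^s} \;\lesssim_\epsilon\; \delta^{t-1-\epsilon}\, I_t(\boldsymbol{\mu}).
\]

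When $t \leq 1$ this is elementary: the trivial estimate $|p-p'|^{-s} \leq \delta^{t-1}|p-p'|^{-t}$ (valid on the $\delta$-scale) handles the off-diagonal portion, while the diagonal contribution $\sum_p \mu(p)^2 \lesssim \delta^t I_t(\boldsymbol{\mu})$ absorbs the nearly-coincident parabolas. The delicate regime is $t > 1$, where the ``close'' restriction must be combined nontrivially with the higher-dimensional structure of $\mu$. The plan here is to invoke the parabolic involution $\Psi$ of Proposition \ref{prop3}: it sends each tube $[p+\mathbb{P}]_\delta$ to a $\delta$-tube around a line $\ell_p$, so by the change of variables $\|F\|_2^2 = \|\sum_p \mu(p) \tilde{\nu}_p\|_2^2$ with $\tilde\nu_p := \nu_p \circ \Psi$ an $s$-Frostman density on the $\delta$-tube around $\ell_p$. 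A dyadic level-set decomposition $E_\lambda := \{x : F(x) \sim \lambda\}$ then shows that each $x \in E_\lambda$ forces the parameter set $\{p : x \in [\ell_p]_\delta\}$, which works out to a downward parabola in $p$-space, to carry $\mu$-mass $\gtrsim \lambda\, \delta^{2-s}$. After a further application of $\Psi$ to the $p$-space converting these downward parabolas into lines, the quantitative Ren--Wang theorem (Theorem \ref{thm:renWang}) bounds $|E_\lambda|_\delta$ in terms of $\lambda$ and the Frostman structure of $\mu$, and integrating $\lambda^2 |E_\lambda|_\delta$ dyadically over $\lambda$ yields the target exponent.

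The main obstacle will be meeting the hypotheses of Ren--Wang: the theorem requires the line family and the dense subset of each tube to form $(\delta, \cdot, \delta^{-\epsilon})$-sets with explicit Frostman constants, whereas the $t$-energy assumption on $\boldsymbol{\mu}$ provides such regularity only on average. Resolving this calls for multi-scale pigeonholing of $\boldsymbol{\mu}$ and of the threshold $\lambda$ into Frostman-regular pieces, in the same spirit as the induction carrying Proposition \ref{prop2} to Corollary \ref{cor1}. A secondary, comparatively routine, technical point is controlling the nearly-parallel regime $|a| \lesssim \delta$; the assumption $s + t \leq 2$ ensures that the resulting diagonal contribution is absorbed within the claimed $\delta^{2s+t-3-\epsilon}$ range with the help of the $\delta^{-\epsilon}$ slack.
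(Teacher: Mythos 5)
Your bilinear expansion $\int F^2 = \delta^{-2}\sum_{p,p'}\mu(p)\mu(p')\sum_q\sigma_p(q)\sigma_{p'}(q)$ and the lens estimate $\sum_q\sigma_p(q)\sigma_{p'}(q)\lesssim\delta^{2s}/\max(\delta,|a|)^s$ are both correct, and they are a reasonable alternative opening to the paper's level-set (``$r$-rich squares'') decomposition. For $t\leq 1$ the elementary kernel comparison $|p-p'|^{-s}\leq\delta^{t-1}|p-p'|^{-t}$ does close the off-diagonal part; this is a genuine simplification (the paper runs the same machinery for all $t$). A side remark: your diagonal claim $\sum_p\mu(p)^2\lesssim\delta^t I_t(\boldsymbol{\mu})$ is not implied by the hypotheses as stated (the discrete energy $I_t(\boldsymbol{\mu})=1+\sum_{p\neq q}\cdots$ does not see the single-square masses, so a $\delta$-point mass has $I_t(\boldsymbol{\mu})=1$ while $\sum_p\mu(p)^2=1$); the paper's reduction to Proposition \ref{thm4} has the same blind spot, and in both cases the application rescues it because there $\bar\mu(p)\lesssim\delta^{t/2}$, but you should flag that this is an extra input.

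The real problem is the $t>1$ regime, and it is twofold. First, the bilinear-plus-lens reduction is already too lossy there: your ``key inequality'' $\sum_{(p,p')\text{ close}}\frac{\mu(p)\mu(p')}{\max(\delta,|p-p'|)^s}\lesssim\delta^{t-1-\epsilon}I_t(\boldsymbol{\mu})$ is \emph{false} for $t>1$. Take $\boldsymbol{\mu}$ uniform (mass $\delta^t$ per square) on a generic $(\delta,t)$-regular set of $\delta^{-t}$ squares; then $I_t(\boldsymbol{\mu})\sim\log(1/\delta)$, the close constraint excludes only a bounded fraction of each dyadic annulus, and the left side is $\sim\delta^{2t}\cdot\delta^{-t}\cdot\delta^{-t}\sum_\rho\rho^{t-s}\sim 1$, while the right side tends to $0$. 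The point is that the per-pair lens bound can be saturated for a single pair, but never for all close pairs simultaneously; discarding that incidence constraint is exactly where the information is lost. Second, the repair you propose for $t>1$ invokes Theorem \ref{thm:renWang}, which only produces \emph{lower} bounds on the size of a Furstenberg set. What the level-set argument actually needs is an \emph{upper} bound on the size of the $r$-rich squares (your $E_\lambda$), which is a dual, incidence-type statement. The paper supplies precisely this via the Fu--Ren incidence bound (Theorem \ref{t:fuRen}, transported to parabolas in Theorem \ref{t:fuRenParabolas}), applied after pigeonholing $\mu(p)$, $\sigma_p(q)$ to single dyadic levels so that $\mathcal{P}$ and the sets $\mathcal{F}(p)\cap\mathcal{F}_r$ become Katz--Tao $(\delta,t)$- and $(\delta,s)$-sets with explicit constants; crucially, the Katz--Tao normalisation passes to subsets, which is what lets one restrict to $\mathcal{F}_r$. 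Without replacing Ren--Wang by an incidence bound of Fu--Ren type, the argument does not close.
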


The reduction from Proposition \ref{thm3} to Theorem \ref{thm2} is standard; we sketch the details.
\begin{proof}[Proof of Theorem \ref{thm2} assuming Proposition \ref{thm3}] Start by observing that $(\mu \ast \sigma)_{\delta} = (\mu \ast \sigma) \ast \psi_{\delta} \lesssim (\mu \ast \sigma) \ast (\psi_{\delta} \ast \psi_{\delta}) = \mu_{\delta} \ast \sigma_{\delta}$. We bound both measures $\mu_{\delta}$ and $\sigma_{\delta}$ point-wise:
\begin{displaymath} \mu_{\delta} \lesssim \sum_{p \in \mathcal{D}_{\delta}} \bar{\mu}(p) \cdot (\delta^{-2}\mathbf{1}_{p}) \quad \text{and} \quad \sigma_{\delta} \lesssim \sum_{q \in \mathcal{D}_{\delta}} \bar{\sigma}(q) \cdot (\delta^{-2}\mathbf{1}_{q}).  \end{displaymath}  
Here $\bar{\boldsymbol{\mu}} = \{\bar{\mu}(p)\}_{p \in \mathcal{D}_{\delta}}$ is a suitable $\delta$-measure satisfying $I_{t}(\bar{\boldsymbol{\mu}}) \lesssim \max\{I_{t}(\mu),1\}$, and $\bar{\boldsymbol{\sigma}} = \{\bar{\sigma}(q)\}_{q \in \mathcal{D}_{\delta}}$ is a suitable $(\delta,s,O(1))$-measure supported on the $\delta$-neighbourhood of $\mathbb{P}$, denoted by $[\mathbb{P}]_\delta$ as usual. By the linearity of convolution, and using $\delta^{-2}(1_{p} \ast 1_{q}) \leq \mathbf{1}_{p + q}$, we find
\begin{displaymath} (\mu_{\delta} \ast \sigma_{\delta})(x) \lesssim \sum_{p \in \mathcal{D}_{\delta}} \bar{\mu}(p) \sum_{q \in \mathcal{D}_{\delta}} \bar{\sigma}(q) (\delta^{-2}\mathbf{1}_{p + q})(x). \end{displaymath} 
Now, for $p \in \mathcal{D}_{\delta}$ fixed, we define the weights $\sigma_{p}(p + q) := \bar{\sigma}(q)$. (We leave to the reader the fine print that $p + q$ is not exactly a dyadic square.) The ensuing $\delta$-measure $\boldsymbol{\sigma}_{p} = \{\sigma_{p}(q)\}_{q \in \mathcal{D}_{\delta}}$ is a $(\delta,s,O(1))$-measure supported on $p + [\mathbb{P}]_\delta$, and we have just seen that the convolution $(\mu \ast \sigma)_{\delta}$ is point-wise bounded by the expression
\begin{displaymath} \sum_{p \in \mathcal{D}_{\delta}} \bar{\mu}(p) \sum_{q \in \mathcal{D}_{\delta}} \sigma_{p}(q)(\delta^{-2}\mathbf{1}_{q}). \end{displaymath}
Therefore, \eqref{form29} implies \eqref{form30}. \end{proof}

We will further reduce the proof of Proposition \ref{thm3} to the following version, where the "energy" assumption $I_{t}(\boldsymbol{\mu}) \leq 1$ is replaced by a Frostman condition. 
\begin{proposition}\label{thm4} Let $s \in [0,1]$ and $t \in (0,2)$ such that $s + t \leq 2$, and let $\mathbf{C}_{\mu},\mathbf{C}_{\sigma} \geq 1$. Let $\boldsymbol{\mu}$ be a $(\delta,t,\mathbf{C}_{\mu})$-measure. For each $p \in \mathcal{D}_{\delta}$, let $\boldsymbol{\sigma}_{p}$ be a $(\delta,s,\mathbf{C}_{\sigma})$-measure supported on $p + [\mathbb{P}]_\delta := \{q \in \mathcal{D}_{\delta} : q \cap (p + [\mathbb{P}]_\delta) \neq \emptyset\}$. Then,
\begin{equation}\label{form32} \int \Big( \sum_{p \in \mathcal{D}_{\delta}} \mu(p) \sum_{q \in \mathcal{D}_{\delta}} \sigma_{p}(q) \cdot (\delta^{-2}\mathbf{1}_{q}) \Big)^{2} \, dx \lesssim_{\epsilon} (\mathbf{C}_{\mu}\mathbf{C}_{\sigma}^{2}\|\boldsymbol{\mu}\|) \cdot \delta^{2s + t - 3 - \epsilon} + 1, \qquad \epsilon > 0. \end{equation} \end{proposition}

\begin{proof}[Proof of Proposition \ref{thm3} assuming Proposition \ref{thm4}] Let $\boldsymbol{\mu}$ and $\boldsymbol{\sigma}_{p}$ be the objects from Proposition \ref{thm3}, so in particular $I_{t}(\boldsymbol{\mu}) \leq 1$. For each $j \geq 1$, let $\boldsymbol{\mu}_{j}$ be the $\delta$-measure obtained by restricting $\boldsymbol{\mu}$ to the squares $p \in \mathcal{D}_{\delta}$ satisfying
\begin{equation}\label{form31} 2^{j - 1} \leq \sum_{q : q \neq p} \frac{\mu(q)}{\dist(p,q)^{t}} \leq 2^{j}. \end{equation}
For $j = 0$, we define $\boldsymbol{\mu}_{j} = \boldsymbol{\mu}_{0}$ similarly, except that the lower bound "$2^{j - 1} \leq$" is omitted. It is now easy to check, using $I_{t}(\boldsymbol{\mu}) \leq 1$, that each $\boldsymbol{\mu}_{j}$ is a $(\delta,t,O(2^{j}))$-measure with $\|\boldsymbol{\mu}_{j}\| \lesssim I_{t}(\boldsymbol{\mu})2^{-j}$. Another easy observation is that $\boldsymbol{\mu}_{j} \equiv 0$ for $2^{j} > \delta^{-2}$, since the expression in \eqref{form31} is bounded from above by $\|\boldsymbol{\mu}\| \cdot \delta^{-2} \leq \delta^{-2}$.

Now, fixing $x \in \R^{2}$, note the point-wise inequality
 \begin{displaymath} \sum_{p \in \mathcal{D}_{\delta}} \mu(p) \sum_{q \in \mathcal{D}_{\delta}} \sigma_{p}(q) \cdot (\delta^{-2}\mathbf{1}_{q}(x)) \lesssim (\log(1/\delta))^{2} \max_{j} \sum_{p \in \mathcal{D}_{\delta}} \mu_{j}(p) \sum_{q \in \mathcal{D}_{\delta}} \sigma_{p}(q)\cdot (\delta^{-2}\mathbf{1}_{q}(x)). \end{displaymath}
 Consequently, the left hand side of \eqref{form32} is bounded from above by
 \begin{displaymath} (\log(1/\delta))^{O(1)}\sum_{j = 1}^{(\log 1/\delta)^{2}} \int \Big( \sum_{p \in \mathcal{D}_{\delta}} \mu_{j}(p) \sum_{q \in \mathcal{D}_{\delta}} \sigma_{p}(q) \cdot (\delta^{-2}\mathbf{1}_{q}) \Big)^{2} \, dx. \end{displaymath}
 Since each of the $\delta$-measures $\boldsymbol{\mu}_{j}$ satisfy the Frostman condition suited for an application of Proposition \ref{thm4}, the expression above is bounded from above by
 \begin{displaymath} \lesssim (\log(1/\delta))^{O(1)}\sum_{j = 1}^{(\log 1/\delta)^{2}} \left[ (2^{j}\|\boldsymbol{\mu}_{j}\|) \cdot \delta^{2s + t - 3 - \epsilon} + 1 \right] \lesssim_{\epsilon} I_{t}(\boldsymbol{\mu})\delta^{2s + t - 3 - 2\epsilon}. \end{displaymath} 
 This completes the proof of Proposition \ref{thm3}. \end{proof}

It then remains to prove Proposition \ref{thm4}. We first recall the notion of Katz-Tao $(\delta,s)$-set.
\begin{definition}[Katz-Tao $(\delta,s)$-set]
	Let $(X,d)$ be a metric space.  We say that a $\delta$-separated set $P\subset X$ is a \emph{Katz-Tao $(\delta, s, C)$-set} if
	\[
	|P\cap B(x,r)|\leq C\Bigl(\frac{r}{\delta}\Bigr)^s, \qquad x\in X, r\geq \delta.
	\]
\end{definition}
The definition extends to families of lines and $\delta$-tubes in the same way as the notion of $(\delta,s)$-sets in Section \ref{s:prelim}. A slight variant of the notion above was initially introduced by Katz and Tao \cite{KT01}. The main tool in the proof of Proposition \ref{thm4} will be the following incidence bound of Fu and Ren \cite[Theorem 5.2]{fu2022incidence}:
\begin{thm}\label{t:fuRen} Let $s \in [0,1]$ and $t \in [0,2]$ such that $s + t \leq 2$, and let $C_{1},C_{2} \geq 1$. Let $\mathcal{T}$ be a Katz-Tao $(\delta,t,C_{1})$-set of $\delta$-tubes. For every $T \in \mathcal{T}$, let $\mathcal{F}(T) \subset \{p \in \mathcal{D}_{\delta} : p \cap T \neq \emptyset\}$ be a Katz-Tao $(\delta,s,C_{2})$-set. Then, with $\mathcal{F} = \bigcup_{T \in \mathcal{T}} \mathcal{F}(T)$,
\begin{displaymath} \sum_{T \in \mathcal{T}} |\mathcal{F}(T)| \lesssim_{\epsilon} \delta^{-\epsilon}\sqrt{\delta^{-1}C_{1}C_{2}|\mathcal{F}||\mathcal{T}|}, \qquad \epsilon > 0. \end{displaymath} \end{thm}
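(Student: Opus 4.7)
The plan is to estimate the total incidence count $I := \sum_{T \in \mathcal{T}} |\mathcal{F}(T)|$ via a dual Cauchy--Schwarz, reducing matters to an off-diagonal ``pairs of tubes through a common $\delta$-square'' count, and then to handle the range $t > 1$ by an induction on scales.

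After a standard dyadic pigeonholing (losing a $\log(1/\delta)$ factor) I may assume $|\mathcal{F}(T)| \sim M$ for every $T \in \mathcal{T}$. For $p \in \mathcal{F}$ set $r(p) := |\{T \in \mathcal{T} : p \in \mathcal{F}(T)\}|$. Cauchy--Schwarz gives
\begin{displaymath}
I^{2} = \Bigl(\sum_{p \in \mathcal{F}} r(p)\Bigr)^{2} \leq |\mathcal{F}| \sum_{p \in \mathcal{F}} r(p)^{2} = |\mathcal{F}|\Bigl( I + \sum_{T_{1} \neq T_{2}} |\mathcal{F}(T_{1}) \cap \mathcal{F}(T_{2})|\Bigr),
\end{displaymath}
so the target inequality reduces (after absorbing the diagonal) to the off-diagonal estimate
\begin{displaymath}
\sum_{T_{1} \neq T_{2}} |\mathcal{F}(T_{1}) \cap \mathcal{F}(T_{2})| \lesssim_{\epsilon} \delta^{-1-\epsilon} C_{1} C_{2} |\mathcal{T}|.
\end{displaymath}
To control the off-diagonal sum I would bucket pairs $(T_{1},T_{2})$ dyadically by their affine-Grassmannian distance $\rho = 2^{k}\delta \in [\delta,1]$. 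At such distance the two $\delta$-tubes cross at angle $\sim \rho$, so $T_{1} \cap T_{2}$ sits in a ball of radius $\sim \delta/\rho$; the Katz--Tao condition on $\mathcal{F}(T_{1})$ yields $|\mathcal{F}(T_{1}) \cap \mathcal{F}(T_{2})| \lesssim C_{2}\rho^{-s}$, while the Katz--Tao condition on $\mathcal{T}$ gives at most $\lesssim C_{1}(\rho/\delta)^{t}$ choices of $T_{2}$ at distance $\sim \rho$ from a fixed $T_{1}$. Multiplying and summing dyadically over $\rho$ produces $\lesssim C_{1}C_{2}|\mathcal{T}|\,\delta^{-\max(s,t)}$, which matches the target exactly when $\max(s,t) \leq 1$.

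The main obstacle is the range $t > 1$ (which, under the hypothesis $s + t \leq 2$, forces $s < 1$), where the naive sum overshoots the target by the factor $\delta^{-(t-1)}$. My plan to close this gap is an induction on scales: fix an intermediate scale $\Delta \in (\delta, 1)$, apply the theorem at scale $\Delta$ to the $\Delta$-thickenings of $\mathcal{T}$ and $\mathcal{F}$ (which inherit the Katz--Tao hypotheses at the coarser scale with the same constants $C_{1},C_{2}$), and then, inside each surviving $\Delta$-tube, rescale by $\Delta^{-1}$ and apply the theorem again at the relative scale $\sigma := \delta/\Delta$. The constraint $s + t \leq 2$ is precisely what makes the exponents from the two invocations multiply out to the desired $\delta^{-1}$; picking $\Delta = \delta^{1/N}$ with $N$ large absorbs the iteration into the claimed $\delta^{-\epsilon}$ loss. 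The most delicate bookkeeping will be to verify that the Katz--Tao constants survive both scale transitions without multiplicative degradation, which is the point at which the tight form $\sqrt{\delta^{-1}C_{1}C_{2}|\mathcal{F}||\mathcal{T}|}$ (as opposed to weaker $L^{p}$-averaged variants) is really tested.
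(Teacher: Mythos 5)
You should first note that the paper does not prove Theorem \ref{t:fuRen} at all: it is imported verbatim from Fu and Ren \cite[Theorem 5.2]{fu2022incidence} and used as a black box, so there is no internal proof to compare your argument with; I am assessing your proposal on its own merits.

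The first half of your plan is sound. The Cauchy--Schwarz reduction to the off-diagonal count $\sum_{T_1\neq T_2}|\mathcal{F}(T_1)\cap\mathcal{F}(T_2)|\lesssim_{\epsilon}\delta^{-1-\epsilon}C_1C_2|\mathcal{T}|$ is valid (the diagonal is absorbed because $|\mathcal{F}|\le\sum_T|\mathcal{F}(T)|\le\delta^{-1}|\mathcal{T}|$), and the dyadic decomposition by the Grassmannian distance $\rho$, using $|\mathcal{F}(T_1)\cap\mathcal{F}(T_2)|\lesssim C_2\rho^{-s}$ and at most $C_1(\rho/\delta)^{t}$ choices of $T_2$, does prove the theorem when $\max(s,t)\le 1$. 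The genuine gap is the range $t>1$, which is exactly the range the paper needs, and there your argument is only a plan whose key supporting claims are not correct as stated. First, coarse thickenings do \emph{not} inherit the Katz--Tao hypotheses ``with the same constants'': a Katz--Tao $(\delta,t,C_1)$-family, viewed at resolution $\Delta$, is in general only a Katz--Tao $(\Delta,t,C_1(\Delta/\delta)^{t})$-family, and symmetrically the fine tube family localized to a coarse cell and rescaled need not be Katz--Tao $(\delta/\Delta,t,O(C_1))$, because the original condition says nothing about the angular/offset localization created by the coarse grouping. Second, the assertion that ``$s+t\le 2$ is precisely what makes the exponents multiply out'' is the entire content of the theorem and is nowhere verified; composing the bound one is trying to prove across two scales does not close by itself, since the quantities $|\mathcal{F}|$, $|\mathcal{T}|$, $C_1$, $C_2$ localize to coarse cells in a way that must be controlled (typically via pigeonholing to uniform subfamilies and a separate treatment of concentrated, bush-like configurations). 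A concrete test case shows why some new mechanism is needed: take $s=0$, $t=2$, $C_1=C_2=1$, $\mathcal{T}$ essentially all $\delta$-tubes and each $\mathcal{F}(T)$ a single square chosen on a fixed line. Then the true off-diagonal count is $\sim\delta^{-3}=\delta^{-1}C_1C_2|\mathcal{T}|$, while your pairwise charge of $C_2$ per transversal pair gives $\sim\delta^{-4}$; the gain must come from the fact that almost all transversal pairs contribute zero, i.e.\ the count has to be organized square-by-square (through the richness function $r(p)$) rather than tube-pair-by-tube-pair, and nothing in the two-scale sketch yet captures this. So as it stands the proposal does not constitute a proof of the statement in the regime where it is actually used.
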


\begin{remark} Theorem \ref{t:fuRen} was used in \cite{fu2022incidence} to solve the cases $s + t \geq 2$ of the Furstenberg set conjecture. As already mentioned at the beginning of Section \ref{s2}, the conjecture was fully settled in 2023 by Ren and Wang \cite{2023arXiv230808819R}. \end{remark}

In fact, we will need a version of Theorem \ref{t:fuRen}, where $\delta$-tubes are replaced by translates of $[\mathbb{P}]_\delta$. The following version can be deduced from Theorem \ref{t:fuRen} by means of the map $\Psi(x,y) = (x,x^{2} - y)$ introduced above Proposition \ref{prop3}. We omit the details, because the argument is so similar to the deduction of Lemma \ref{lemma1} from Theorem \ref{thm:renWang}.

\begin{thm}\label{t:fuRenParabolas} Let $s \in [0,1]$ and $t \in [0,2]$ such that $s + t \leq 2$, and let $C_{1},C_{2} \geq 1$. Let $\mathcal{P} \subset \mathcal{D}_{\delta}$ be a Katz-Tao $(\delta,t,C_{1})$-set. For every $p \in \mathcal{P}$, let $\mathcal{F}(p) \subset \{q \in \mathcal{D}_{\delta} : q \cap (p + [\mathbb{P}]_\delta) \neq \emptyset\}$ be a Katz-Tao $(\delta,s,C_{2})$-set. Then, with $\mathcal{F} = \bigcup_{p \in \mathcal{P}} \mathcal{F}(p)$,
\begin{equation}\label{fuRen} \sum_{p \in \mathcal{P}} |\mathcal{F}(p)| \lesssim_{\epsilon} \delta^{-\epsilon}\sqrt{\delta^{-1}C_{1}C_{2}|\mathcal{F}||\mathcal{P}|}, \qquad \epsilon > 0. \end{equation} \end{thm}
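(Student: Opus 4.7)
The plan is to reduce Theorem \ref{t:fuRenParabolas} to Theorem \ref{t:fuRen} by means of the parabola-to-line straightening diffeomorphism $\Psi(x,y) = (x, x^{2} - y)$ from Proposition \ref{prop3}, in exactly the same spirit as the deduction of Lemma \ref{lemma1} from Theorem \ref{thm:renWang}. Recall that $\Psi$ is an involution, locally bi-Lipschitz, sends each translated parabola $z + \bar{\mathbb{P}}$ to the line $\ell_{\Psi(z)} \in \mathcal{A}(2,1)$, and the induced map $z \mapsto \Psi(z + \bar{\mathbb{P}})$ is itself locally bi-Lipschitz into $(\mathcal{A}(2,1), d_{\mathcal{A}(2,1)})$.

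First I would fix, for each $p \in \mathcal{P}$, the centre $z_{p}$ of $p$ and set $w_{p} := \Psi(z_{p})$, so that $\Psi(z_{p} + \bar{\mathbb{P}}) = \ell_{w_{p}}$ by Proposition \ref{prop3}(b). Using the local bi-Lipschitz property of $\Psi$ on a fixed compact region containing all relevant data, the $O(\delta)$-neighbourhood of the translated parabola straightens to an $O(\delta)$-neighbourhood of $\ell_{w_{p}}$:
\[ \Psi([z_{p} + \mathbb{P}]_{O(\delta)}) \subset [\ell_{w_{p}}]_{C\delta} =: T_{p}, \]
for some absolute $C > 0$. Define the tube family $\mathcal{T} := \{T_{p} : p \in \mathcal{P}\}$. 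Because $\mathcal{P}$ is a Katz-Tao $(\delta,t,C_{1})$-set and $z \mapsto \Psi(z + \bar{\mathbb{P}})$ is locally bi-Lipschitz from $\mathbb{R}^{2}$ into $\mathcal{A}(2,1)$, the collection $\mathcal{T}$ is a Katz-Tao $(\delta, t, O(C_{1}))$-family of $(C\delta)$-tubes; covering each $(C\delta)$-tube by $O(1)$ standard $\delta$-tubes then produces an honest Katz-Tao $(\delta, t, O(C_{1}))$-set of $\delta$-tubes to which Theorem \ref{t:fuRen} is literally applicable.

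Next, for each $p \in \mathcal{P}$ let $\mathcal{G}(p) \subset \mathcal{D}_{\delta}$ be a minimal dyadic cover of $\Psi(\bigcup \mathcal{F}(p))$. By the bi-Lipschitz property of $\Psi$ we have $|\mathcal{G}(p)| \sim |\mathcal{F}(p)|$, and $\mathcal{G}(p)$ inherits the Katz-Tao $(\delta, s, O(C_{2}))$-set property from $\mathcal{F}(p)$. Since $\bigcup \mathcal{F}(p) \subset [z_{p} + \mathbb{P}]_{O(\delta)}$, the containment displayed above gives $\bigcup \mathcal{G}(p) \subset T_{p}$, so the pair $(\mathcal{T}, \{\mathcal{G}(p)\})$ is of exactly the form required by Theorem \ref{t:fuRen}. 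Applying that theorem then yields
\[ \sum_{p \in \mathcal{P}} |\mathcal{F}(p)| \sim \sum_{p \in \mathcal{P}} |\mathcal{G}(p)| \lesssim_{\epsilon} \delta^{-\epsilon} \sqrt{\delta^{-1} C_{1} C_{2} |\mathcal{G}| |\mathcal{T}|} \sim \delta^{-\epsilon} \sqrt{\delta^{-1} C_{1} C_{2} |\mathcal{F}| |\mathcal{P}|}, \]
which is the asserted inequality \eqref{fuRen}.

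The only real obstacle is bookkeeping: one must verify (a) that Katz-Tao constants inflate by at most an absolute factor under $\Psi$, thanks to uniform bi-Lipschitz bounds on the compact region where the problem lives; (b) that replacing $(C\delta)$-tubes by $\delta$-tubes costs only an $O(1)$ multiplicative factor on both sides of the Fu--Ren bound; and (c) that the dyadic cover $\mathcal{G}(p)$ of $\Psi(\bigcup \mathcal{F}(p))$ has cardinality and incidence structure equivalent to $\mathcal{F}(p)$, up to absolute constants. Each of these is a quantitative counterpart of a remark already made in the deduction of Lemma \ref{lemma1} from Theorem \ref{thm:renWang}, which is presumably why the authors elect to omit the details.
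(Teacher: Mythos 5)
Your proposal is correct and follows exactly the route the paper intends: the authors omit the proof, stating only that Theorem \ref{t:fuRenParabolas} follows from Theorem \ref{t:fuRen} via the straightening map $\Psi$, by an argument analogous to the deduction of Lemma \ref{lemma1} from Theorem \ref{thm:renWang}, which is precisely what you carry out. The bookkeeping points you flag (constant inflation under $\Psi$, covering $(C\delta)$-tubes by $\delta$-tubes, replacing $\Psi(\cup\mathcal{F}(p))$ by dyadic covers) are the same technical adjustments the paper itself waves through in the Lemma \ref{lemma1} argument, and they are harmless since the Fu--Ren bound only degrades by absolute multiplicative constants.
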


\begin{proof}[Proof of Proposition \ref{thm4}] We start by a pigeonholing exercise, reducing to a special case of Proposition \ref{thm4} where all the coefficients $\mu(p)$ and $\sigma_{p}(q)$ are constant, up to a factor of $2$. First, let $\mathcal{D}_{\delta}(j,i) := \{(p,q) \in \mathcal{D}_{\delta}\times\mathcal{D}_\delta : 2^{-j - 1} \leq \mu(p) \leq 2^{-j}, 2^{-i-1}\leq \sigma_p(q) \leq 2^{-i}\}$ for $2^{-j-i}\geq \delta^{6}$, and $\mathcal{D}_{\delta}(\infty) := \{(p,q) : \mu(p)\sigma_p(q) < \delta^6\}$. With this notation, we have the point-wise bound
\begin{displaymath} \sum_{p \in \mathcal{D}_{\delta}} \mu(p) \sum_{q \in \mathcal{D}_{\delta}} \sigma_{p}(q) \cdot (\delta^{-2}\mathbf{1}_{q}) \lessapprox \max_{(j,i)}  \sum_{(p,q) \in \mathcal{D}_{\delta}(j,i)} \mu(p) \sigma_{p}(q) \cdot (\delta^{-2}\mathbf{1}_{q}), \end{displaymath}
so
\begin{displaymath} \int \Big( \sum_{p \in \mathcal{D}_{\delta}} \mu(p) \sum_{q \in \mathcal{D}_{\delta}} \sigma_{p}(q) \cdot (\delta^{-2}\mathbf{1}_{q}) \Big)^{2} \, dx \lessapprox \sum_{j,i} \int \Big( \sum_{(p,q) \in \mathcal{D}_{\delta}(j,i)} \mu(p) \sigma_{p}(q) \cdot (\delta^{-2}\mathbf{1}_{q}) \Big)^{2} \, dx. \end{displaymath}
Here the "$\lessapprox$" notation hides absolute powers of $\log(1/\delta)$. Since there are $\lessapprox 1$ terms in the sum, it suffices to prove \eqref{form32} individually for each term. In the case $2^{-j-i} \geq \delta^{6}$, the values of $\mu(p)$ and $\sigma_p(q)$ are almost constant, as desired. Moreover, the $(j,i) = \infty$ term is estimated using trivial bounds as
\begin{displaymath} \leq \delta^{12}\int_{[0,1]^{2}} \left( |\mathcal{D}_{\delta}|^{2}\delta^{-2} \right)^{2} \, dx \leq 1.  \end{displaymath} 
Now, fixing $(j,i)$ with $2^{-j-i} \geq \delta^{6}$, we have found:
\begin{itemize}
\item[(a)] A subset $\mathcal{P} \subset \mathcal{D}_{\delta}$ and a number $\mu := 2^{-j} \geq \delta^{6}$ such that $\mu(p) \sim \mu$ for all $p \in \mathcal{P}$. 
\item[(b)] For every $p \in \mathcal{P}$ a subset $\mathcal{F}(p) \subset \mathcal{D}_{\delta}$ and a number $\sigma := 2^{-i} \geq \delta^{6}$ such that $\sigma_{p}(q) \sim \sigma$ for all $q \in \mathcal{F}(p)$. In particular, since $\sigma_{p}$ is supported on $p + [\mathbb{P}]_\delta$, 
\begin{displaymath} \cup \mathcal{F}(p) \subset p + [\mathbb{P}]_\delta. \end{displaymath} 
\end{itemize} 
To complete the proof of \eqref{form32}, it suffices to show
\begin{equation}\label{form33} (\mu \sigma \delta^{-2})^{2} \int \Big( \sum_{p \in \mathcal{P}} \mathbf{1}_{\mathcal{F}(p)} \Big)^{2} \, dx \lesssim_{\epsilon} (\mathbf{C}_{\mu}\mathbf{C}_{\sigma}^{2}\|\boldsymbol{\mu}\|) \cdot \delta^{2s + t - 3 - \epsilon}, \qquad \epsilon > 0. \end{equation}
We start by recording the following facts about the non-concentration of $\mathcal{P}$ and each $\mathcal{F}(p)$:
\begin{itemize}
\item $\mathcal{P}$ is a Katz-Tao $(\delta,t,O(\mathbf{C}_{\mu}\delta^{t}\mu^{-1}))$-set. Note that $\mathbf{C}\delta^{t}\mu^{-1} \gtrsim 1$.
\item $\mathcal{F}(p)$ is a Katz-Tao $(\delta,s,O(\mathbf{C}_{\sigma}\delta^{s}\sigma^{-1}))$-set for each $p \in \mathcal{P}$. Note that $\delta^{s}\sigma^{-1} \gtrsim 1$.
\end{itemize}
The proofs are similar, let us check this for $\mathcal{P}$. Fix $Q \in \mathcal{D}_{r}$ with $\delta \leq r \leq 1$. Then,
\begin{displaymath} \mu \cdot |\mathcal{P} \cap Q| \stackrel{\textup{(a)}}{\lesssim} \mu(Q) \leq \mathbf{C}_{\mu}r^{t} \quad \Longrightarrow \quad |\mathcal{P} \cap Q| \leq (\mathbf{C}_{\mu}\delta^{t}\mu^{-1}) \cdot \left(\tfrac{r}{\delta} \right)^{t}. \end{displaymath}
Now, to proceed estimating \eqref{form33}, fix $r \in 2^{\N} \cup \{1\}$, and consider the "$r$-rich squares"
\begin{displaymath} \mathcal{F}_{r} := \{q \in \mathcal{D}_{\delta} : r_{\mathcal{P}}(p) \in [r,2r]\}, \quad \text{where} \quad r_{\mathcal{P}}(q) := \sum_{p \in \mathcal{P}} \mathbf{1}_{\mathcal{F}(p)}(q). \end{displaymath}
With this notation, and noting that $\|r_{\mathcal{P}}\|_{L^{\infty}} \leq |\mathcal{P}| \leq \delta^{-2}$,
\begin{equation}\label{form35} \int \Big( \sum_{p \in \mathcal{P}} \mathbf{1}_{\mathcal{F}(p)} \Big)^{2} \, dx \lesssim \sum_{r = 1}^{\delta^{-2}} r^{2} \cdot \mathrm{Leb}(\cup \mathcal{F}_{r}). \end{equation}
We then plan to estimate the Lebesgue measures of the $r$-rich sets $\mathcal{F}(r)$ separately, and indeed we claim that
\begin{equation}\label{form34} \mathrm{Leb}( \mathcal{F}_{r}) = \delta^{2}|\mathcal{F}_{r}| \lesssim_{\epsilon} \frac{\mathbf{C}_{\mu}\mathbf{C}_{\sigma}(\mu \sigma)^{-1}\delta^{s + t + 1 - \epsilon}|\mathcal{P}|}{r^{2}}, \qquad r \geq 1. \end{equation}
Before proving \eqref{form34}, let us check that it implies \eqref{form33}, and therefore \eqref{form32}. Indeed, plugging \eqref{form34} into \eqref{form35}, we find
\begin{align*} (\mu \sigma \delta^{-2})^{2} \int \Big(\sum_{p \in \mathcal{P}} \mathbf{1}_{\mathcal{F}(p)} \Big)^{2} \, dx & \lesssim_{\epsilon} \mathbf{C}_{\mu}\mathbf{C}_{\sigma}(\mu \sigma)\delta^{s + t - 3 - \epsilon}|\mathcal{P}|\\
& \lesssim \mathbf{C}_{\mu}\mathbf{C}_{\sigma}^{2}\mu \delta^{2s + t - 3 - \epsilon}|\mathcal{P}| \lesssim (\mathbf{C}_{\mu}\mathbf{C}_{\sigma}^{2}\|\boldsymbol{\mu}\|)\delta^{2s + t - 3 - \epsilon}, \end{align*} 
where we first used that $\sigma \sim \sigma_{p}(q) \lesssim \mathbf{C}_{\sigma}\delta^{s}$ by the $(\delta,s,\mathbf{C}_{\sigma})$-measure property of $\boldsymbol{\sigma}_{p}$, and finally $\mu|\mathcal{P}| \lesssim \|\boldsymbol{\mu}\|$ by property (a).

Finally, we move to the proof of \eqref{form34}. This is an easy application of Theorem \ref{t:fuRenParabolas} to the Katz-Tao $(\delta,t,O(\mathbf{C}_{\mu}\delta^{t}\mu^{-1}))$-set $\mathcal{P}$, and the Katz-Tao $(\delta,s,O(\mathbf{C}_{\sigma}\delta^{s}\sigma^{-1}))$-sets $\mathcal{F}(p) \cap  \mathcal{F}_{r}$. It is crucial that the Katz-Tao version of the $(\delta,s)$-set condition is inherited by subsets without altering the constants.

Now, recall that if $q \in  \mathcal{F}_{r}$, then by definition there exist $\geq r$ elements $p \in \mathcal{P}$ such that $q \in \mathcal{F}(p)$. Therefore,
\begin{displaymath} r|\mathcal{F}_{r}| \leq \sum_{q \in \mathcal{F}_{r}} |\{p \in \mathcal{P} : q \in \mathcal{F}(p)\}| = \sum_{p \in \mathcal{P}} |\mathcal{F}(p) \cap \mathcal{F}_{r}|\end{displaymath} 
Since all the sets $\mathcal{F}(p) \cap \mathcal{F}_{r}$ are contained in $\mathcal{F}_{r}$, an application of Theorem \ref{t:fuRenParabolas} gives the following upper bound for the right hand side:

\begin{displaymath} \sum_{p \in \mathcal{P}} |\mathcal{F}(p) \cap \mathcal{F}_{r}| \lesssim_{\epsilon} \delta^{-\epsilon} \sqrt{\delta^{-1} \cdot (\mathbf{C}_{\mu}\delta^{t}\mu^{-1}) \cdot (\mathbf{C}_{\sigma}\delta^{s}\sigma^{-1}) \cdot |\mathcal{F}_{r}||\mathcal{P}|}, \qquad \epsilon > 0. \end{displaymath} 
Combining and rearranging the previous two inequalities, 
\begin{displaymath} |\mathcal{F}_{r}| \lesssim_{\epsilon} \frac{\mathbf{C}_{\mu}\mathbf{C}_{\sigma}(\mu \sigma)^{-1}\delta^{s + t - 1 - \epsilon}|\mathcal{P}|}{r^{2}}, \qquad \epsilon > 0. \end{displaymath}
This is what was claimed in \eqref{form34}, so the proof is complete. \end{proof}

\begin{remark} For applications elsewhere, we finally record a variant of Proposition \ref{thm4} where the measures $\boldsymbol{\sigma}_{p}$ are supported near $\delta$-tubes instead of translates of the parabola $\mathbb{P}$. The proof is the same as the proof of Proposition \ref{thm4}, except that one applies the original Fu-Ren incidence estimate, Theorem \ref{t:fuRen}, instead of its "parabola version" Theorem \ref{t:fuRenParabolas}.

If $p \in \mathcal{D}_{\delta}$ with centre $(a,b)$, we write
\begin{displaymath} \ell_{p} = \{(x,y) : y = ax + b\} \quad \text{and} \quad T_{p} := [\ell_{p}]_{\delta/2}. \end{displaymath}
With this convention, the following holds: if $\mathcal{P} \subset \mathcal{D}_{\delta}$ is a Katz-Tao $(\delta,t,C)$-set of $\delta$-squares, the $\delta$-tube family $\{T_{p} : p \in \mathcal{P}\}$ is a Katz-Tao $(\delta,t,C')$-set of $\delta$-tubes with $C' \lesssim C$. \end{remark}

\begin{proposition}\label{thm4Tubes} Let $s \in [0,1]$ and $t \in (0,2)$ such that $s + t \leq 2$, and let $\mathbf{C}_{\mu},\mathbf{C}_{\sigma} \geq 1$. Let $\boldsymbol{\mu}$ be a $(\delta,t,\mathbf{C}_{\mu})$-measure. For each $p \in \mathcal{D}_{\delta}$, let $\boldsymbol{\sigma}_{p}$ be a $(\delta,s,\mathbf{C}_{\sigma})$-measure supported on $\mathcal{D}_{\delta}(T_{p}) = \{q \in \mathcal{D}_{\delta} : q \cap T_{p} \neq \emptyset\}$. Then,
\begin{displaymath} \int \Big( \sum_{p \in \mathcal{D}_{\delta}} \mu(p) \sum_{q \in \mathcal{D}_{\delta}} \sigma_{p}(q) \cdot (\delta^{-2}\mathbf{1}_{q}) \Big)^{2} \, dx \lesssim_{\epsilon} (\mathbf{C}_{\mu}\mathbf{C}_{\sigma}^{2}\|\boldsymbol{\mu}\|) \cdot \delta^{2s + t - 3 - \epsilon} + 1, \qquad \epsilon > 0. \end{displaymath} \end{proposition}

\bibliographystyle{plain}
\bibliography{references}

\end{document}